\documentclass[a4paper,11pt]{article}
 \usepackage[english]{babel}
 \usepackage{graphicx}
 \usepackage[latin1]{inputenc}
 \usepackage[T1]{fontenc}
 \usepackage{lmodern}
 \usepackage[normalem]{ulem}
 \usepackage{verbatim}
 \usepackage{bbm}
 \usepackage{ntheorem}
 \usepackage{stmaryrd}
 \usepackage{graphics}
 \usepackage{amsmath}
 \usepackage{amssymb}

\makeatletter

\makeatother

\makeatletter
\newcommand\xleftrightarrow[2][]{%
  \ext@arrow 9999{\longleftrightarrowfill@}{#1}{#2}}
\newcommand\longleftrightarrowfill@{%
  \arrowfill@\leftarrow\relbar\rightarrow}
\makeatother

\theoremheaderfont{\scshape}
\theoremseparator{.}
\newtheorem{theorem}{Theorem}[section]
\newtheorem{thm}[theorem]{Theorem}
\newtheorem{lem}[theorem]{Lemma}

\newtheorem{conj}[theorem]{Conjecture}
\newtheorem{prop}[theorem]{Proposition}
\newtheorem{coro}[theorem]{Corollary}

\def\sqw{\hbox{\rlap{\leavevmode\raise.3ex\hbox{$\sqcap$}}$%
\sqcup$}}


\newcommand{\N}{\ensuremath{{{\mathbb N}}}}

\newcommand{\proptoop}{\ensuremath{\mathop{V_\infty^\pi}}}

\newcommand{\Z}{\ensuremath{\mathbb Z}}

\newcommand{\agit}{\ensuremath{\curvearrowright}}

\newcommand{\acts}{\ensuremath{\curvearrowright}}

\newcommand{\clusters}{\ensuremath{\mathfrak C}^\pi}

\newcommand{\R}{\ensuremath{\mathbb R}}

\newcommand{\Pp}{\ensuremath{\mathbb{P}}}

\newcommand{\G}{\ensuremath{\Gamma}}

\newcommand{\fleche}{\ensuremath{\longleftrightarrow}}

\newenvironment{rem}[1][Remark.]{\begin{trivlist}
\item[\hskip \labelsep {\itshape #1}]}{\end{trivlist}}

\newenvironment{comm}[1][Comment.]{\begin{trivlist}
\item[\hskip \labelsep {\itshape #1}]}{\end{trivlist}}

\newenvironment{exem}[1][Example.]{\begin{trivlist}
\item[\hskip \labelsep {\textit{#1}}]}{\end{trivlist}}

\newenvironment{bexem}[1][The Bernoulli example.]{\begin{trivlist}
\item[\hskip \labelsep {\textit{#1}}]}{\end{trivlist}}

\newenvironment{cex}[1][Counter-example.]{\begin{trivlist}
\item[\hskip \labelsep {\textit{#1}}]}{\end{trivlist}}

\newenvironment{defi}[1][Definition.]{\begin{trivlist}
\item[\hskip \labelsep {\textsc{#1}}]}{\end{trivlist}}

\newenvironment{nota}[1][Notation.]{\begin{trivlist}
\item[\hskip \labelsep {\textsc{#1}}]}{\end{trivlist}}

\newenvironment{proof}{  
    \vspace*{-.4em}  {\it Proof.}%
}{
    \hfill\sqw\vspace*{.5em}
}
\newcommand{\bde}{\begin{proof}\ }
\newcommand{\ede}{\end{proof}}

\date{\today}
\author{S\'ebastien \sc{Martineau}}
\title{Ergodicity and indistinguishability in percolation theory}
\begin{document}

\maketitle

\begin{abstract}
This paper explores the link between the ergodicity of the cluster equivalence relation restricted to its infinite locus and the indistinguishability of infinite clusters. It is an important element of the dictionary connecting  orbit equivalence and  percolation theory. This note starts with a short exposition of some standard material of these theories. Then, the classic correspondence between ergodicity and indistinguishability is presented. Finally, we introduce a notion of strong indistinguishability that corresponds to strong ergodicity, and obtain that this strong indistinguishability holds in the Bernoulli case. We also define an invariant percolation that is not insertion-tolerant, satisfies the Indistinguishability Property and does not satisfy the Strong Indistinguishability Property.
\end{abstract}

\section*{Introduction}

Orbit equivalence is a branch of ergodic theory that focuses on the dynamical properties of equivalence relations. Its fruitful interactions with other mathematical fields are numerous: operator algebra theory \cite{mvn, p}, foliation theory \cite{c,levitt}, descriptive set theory \cite{jkl,km}\dots\
Among the many concepts of the field, a  fundamental one is the notion of \emph{ergodicity}: an equivalence relation defined on a probability space is said to be ergodic if every saturated set has measure 0 or 1. It is striking to see how a definition that is usually given in the group action context can be easily stated in the seemingly static framework of equivalence relations.

The other fundamental notion considered in this note, \emph{indistinguishability}, belongs to percolation theory, a branch of statistical physics. Percolation is concerned with the study of random subgraphs of a given graph. These subgraphs are generally far from connected, and one is naturally interested in their infinite connected components --- or infinite clusters. A difficult theorem due to Lyons and Schramm \cite{ls} states that, under some hypotheses, if several infinite clusters are produced, they all ``look alike''. This is the Indistinguishability Theorem (see Theorem~\ref{ls}).

Its equivalence to some form of ergodicity should not be surprising: in both cases, when one asks a nice question, all the objects --- in one case the points of the space lying under the relation, in the other one the infinite clusters --- give the same answer. This connection is well-understood (see \cite{gdir,gl} and Proposition~\ref{ergind}).
In the orbit equivalence world, a hard theorem due to Chifan and Ioana (see \cite{ci} and Theorem~\ref{corocf}) allows to get from this ergodicity a \emph{stronger} form of ergodicity.

In this paper, we define a notion of strong indistinguishability and prove its equivalence to strong ergodicity: this is Theorem~\ref{ergindfort}. In particular, Bernoulli percolation satisfies the Strong Indistinguishability Property (see Corollary~\ref{corollaire}). We also define an invariant percolation that is not insertion-tolerant, satisfies the Indistinguishability Property and does not satisfy the Strong Indistinguishability Property (see Subsection~\ref{donotcoincide}). \begin{small}Indistinguishability results are usually hard to prove for non insertion-tolerant percolations: for instance, such a result is expected to hold for the Wired Uniform Spanning Forest but remains conjectural.\end{small}

\vspace{0.3 cm}

This note is self-contained, so that the orbit equivalence part can be read without prerequisite by a percolation theorist and vice versa. The first section presents what will be needed of orbit equivalence theory. The second one deals with percolation theory. The third and last section recalls the classic correspondence between ergodicity and indistinguishability and explores the correspondence between strong ergodicity and the notion of strong indistinguishability defined in this note.

\subsubsection*{Acknowledgements}

I would like to thank Vincent Beffara and Damien Gaboriau for the care with which they have followed and commented this work. I am also grateful to the orbit equivalence community of Lyon for fruitful discussions, in particular to Damien Gaboriau and Fran\c{c}ois Le Ma\^itre. I have been supported by the grants ANR MAC2 and ANR AGORA.

\subsubsection*{Terminology}

If $R$ is an equivalence relation defined on a set $X$, the \emph{$R$-class} of $x$ is
$$
[x]_R:=\{y\in X : xRy\}.
$$
A subset $A$ of $X$ is said to be \emph{$R$-saturated}, or \emph{$R$-invariant}, if 
$$
\forall x\in A, [x]_R\subset A.
$$
The \emph{$R$-saturation} of a subset $A$ of $X$ is the smallest subset $R$-saturated subset of $X$ that contains $A$. Concretely, it is $\bigcup_{x\in A} ~[x]_R$.

\newpage

\section{Orbit equivalence theory}

\setcounter{subsection}{-1}

This section presents standard definitions and theorems from orbit equivalence theory. For details relative to Subsection~\ref{count}, one can refer to \cite{k}. For subsections~\ref{cber} to \ref{sub:graphings}, possible references are \cite{gicm} and \cite{km}.

\subsection{Generalities on the standard Borel space}
\label{count}

A measurable space $X$ is called a \emph{standard Borel space} if it can be endowed with a Polish topology inducing its $\sigma$-algebra. For instance, $\{0,1\}^\N$ endowed with the product $\sigma$-algebra is a standard Borel space.
A measurable subset of a standard Borel space is called a \emph{Borel subset}.

\vspace{0.3 cm}

The following general results on standard Borel spaces will be used without explicit mention.

\begin{thm}
Any Borel subset of a standard Borel space is itself a standard Borel space.
\end{thm}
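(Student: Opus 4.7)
The plan is to follow the classical \emph{change of topology} method from descriptive set theory: refine the given Polish topology on $X$ until $B$ becomes clopen, then restrict. Fix a Polish topology $\tau$ on $X$ inducing the given $\sigma$-algebra.

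I would first handle the base cases. If $F \subseteq X$ is $\tau$-closed, the restriction of any compatible complete metric remains complete, so $F$ is Polish. If $U \subseteq X$ is $\tau$-open, one modifies a compatible complete metric $d$ into $d'(x,y) = d(x,y) + |\varphi(x) - \varphi(y)|$ with $\varphi(x) = 1/d(x, X \setminus U)$; this yields a complete metric on $U$ compatible with the subspace topology, so $U$ is Polish.

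The heart of the proof is the following refinement lemma: for every Borel $B \subseteq X$, there exists a Polish topology $\tau^* \supseteq \tau$ on $X$ with the same Borel $\sigma$-algebra, in which $B$ is clopen. I would establish this by showing that the family $\mathcal F$ of Borel sets enjoying this property is a $\sigma$-algebra containing the $\tau$-open sets. That it contains the opens follows from the base cases: given $U$ open, equip $X = U \sqcup (X \setminus U)$ with the disjoint-union topology of the two Polish pieces; this topology is Polish, finer than $\tau$, has the same Borel sets, and makes $U$ clopen. Closure under complements is immediate. The main obstacle --- and the only genuinely delicate point --- is closure under countable unions. Given $B_n \in \mathcal F$ with witnesses $\tau_n$, I would consider the diagonal embedding $\iota \colon X \hookrightarrow \prod_n (X, \tau_n)$; since each $\tau_n$ is Hausdorff and refines $\tau$, the image $\iota(X)$ is closed in the countable product, hence Polish. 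Pulling this topology back to $X$ yields a Polish topology $\tau'$ on $X$ that refines every $\tau_n$ (so each $B_n$ becomes $\tau'$-clopen) and has the same Borel $\sigma$-algebra as $\tau$, since the open sets of $\tau'$ are generated by the countable union of the $\tau_n$, each of which is Borel for $\tau$. One further application of the open-case refinement to the now-open set $\bigcup_n B_n$ inside $(X, \tau')$ makes it clopen, completing the induction step.

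Once the refinement lemma is in hand, apply it to $B$ itself, obtaining a Polish topology $\tau^*$ on $X$ with the same Borel sets as $\tau$ and in which $B$ is clopen. Then $(B, \tau^*|_B)$ is a closed subspace of a Polish space, hence Polish, and its Borel $\sigma$-algebra is the trace on $B$ of the $\tau^*$-Borel sets, which are exactly the $\tau$-Borel sets. This furnishes the Polish topology on $B$ witnessing that $B$ is itself a standard Borel space.
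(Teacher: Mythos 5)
Your argument is correct: it is the standard change-of-topology proof (closed and open subspaces of Polish spaces are Polish; the family of Borel sets that can be made clopen by a finer Polish topology with the same Borel sets is a $\sigma$-algebra containing the opens, the countable-union step being handled by the closed diagonal embedding into $\prod_n(X,\tau_n)$). The paper itself offers no proof --- it states this as background and refers to Kechris's \emph{Classical Descriptive Set Theory} --- and your proof is precisely the one found there, so there is nothing to compare beyond noting that your write-up is complete and accurate.
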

Let $X$ and $Y$ be two measurable spaces. A bijection $f:X\to Y$ is a \emph{Borel isomorphism} if $f$ and $f^{-1}$ are measurable. If $X=Y$, we speak of \emph{Borel automorphism}.

\begin{thm}
Let $X$ and $Y$ be standard Borel spaces. If $f:X\to Y$ is a measurable bijection, then $f^{-1}$ is automatically measurable, hence a Borel isomorphism.
\end{thm}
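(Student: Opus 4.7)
The plan is to prove the stronger statement that a Borel injection between standard Borel spaces sends Borel sets to Borel sets (the Lusin--Suslin theorem), and then apply it to our bijection $f$. Once we know that $f(B)$ is Borel for every Borel $B\subseteq X$, the set $(f^{-1})^{-1}(B)=f(B)$ is Borel in $Y$, which says exactly that $f^{-1}$ is measurable.

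First I would recall the tool needed: a subset of a standard Borel space is called \emph{analytic} if it is the image, under a Borel map from a standard Borel space, of a Borel set. The key classical input is Lusin's separation theorem, which states that any two disjoint analytic subsets of a standard Borel space can be separated by a Borel set. I would take this theorem as a black box, since it is the genuinely non-trivial piece of descriptive set theory behind the statement. Its proof relies on the Suslin representation of analytic sets via trees on $\N$ and a transfinite construction, and going through it would take us far from the paper's subject.

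Second, given a Borel set $B\subseteq X$, I would consider the two sets $f(B)$ and $f(X\setminus B)$. Both are analytic in $Y$, being Borel images of Borel sets. They are disjoint because $f$ is injective. Lusin's separation theorem therefore yields a Borel set $C\subseteq Y$ with $f(B)\subseteq C$ and $f(X\setminus B)\cap C=\emptyset$. Now surjectivity of $f$ gives $Y=f(B)\cup f(X\setminus B)$, hence the inclusion $C\subseteq f(B)$, and finally $f(B)=C$ is Borel. This is exactly the preimage of $B$ under $f^{-1}$, so $f^{-1}$ is measurable and $f$ is a Borel isomorphism.

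The main obstacle is really the separation theorem; once it is invoked, the argument is a three-line manipulation using injectivity and surjectivity. In a self-contained exposition one could instead reduce to the case $Y=\{0,1\}^{\N}$ by choosing a Borel isomorphism with a Polish space, and then exploit the existence of a Borel section or a refinement of the topology making $f$ continuous; but the Lusin--Suslin route is the cleanest and the most standard, and is what I would present.
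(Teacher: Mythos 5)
The paper does not prove this statement at all: it is listed among the ``general results on standard Borel spaces [that] will be used without explicit mention,'' with the reader referred to the cited descriptive set theory text of Kechris. Your argument is the standard Lusin--Suslin proof and it is correct: $f(B)$ and $f(X\setminus B)$ are disjoint analytic sets by injectivity, the separation theorem produces a Borel $C$ with $f(B)\subseteq C$ and $C\cap f(X\setminus B)=\emptyset$, and surjectivity forces $C=f(B)$, so $(f^{-1})^{-1}(B)=f(B)$ is Borel. Taking the separation theorem as a black box is the right call here; it is the only non-trivial ingredient, and everything else in your write-up is a clean and complete deduction.
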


\begin{thm}
\label{unicbor}
Every non-countable standard Borel space is isomorphic to $[0,1]$. In particular, the continuum hypothesis holds for standard Borel spaces.
\end{thm}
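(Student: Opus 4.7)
The plan is to apply a Borel version of the Schröder--Bernstein theorem together with two Borel injections in opposite directions. First I would establish Borel Schröder--Bernstein: given Borel injections $f:X\to Y$ and $g:Y\to X$ between standard Borel spaces, the classical Cantor--Bernstein construction partitions $X$ and $Y$ into countably many pieces defined by iterating $f$ and $g$, and since $f$ and $g$ are Borel these pieces are Borel, so the piecewise bijection is a Borel isomorphism. It then suffices to inject $X$ into $[0,1]$ and $[0,1]$ into $X$.

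For the injection $X\hookrightarrow[0,1]$, I would fix a Polish topology on $X$ inducing its $\sigma$-algebra. Since the topology is second-countable, pick a countable base $(U_n)_{n\in\N}$ and define $\iota:X\to\{0,1\}^\N$ by $\iota(x)_n:=\mathbbm{1}_{U_n}(x)$. This map is Borel, and it is injective because $(U_n)$ separates points in the Hausdorff space $X$. Composing with any standard Borel embedding $\{0,1\}^\N\hookrightarrow[0,1]$ (for instance, reading the sequence as a base-$3$ expansion with digits in $\{0,2\}$, which lands inside the middle-thirds Cantor set) yields the desired Borel injection.

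The injection $[0,1]\hookrightarrow X$ is the real obstacle and requires the assumption that $X$ is uncountable. I would reduce it to finding a Borel injection of $\{0,1\}^\N$ into $X$ by invoking the perfect set theorem for Polish spaces: every uncountable Polish space, and more generally every uncountable Borel subset of a Polish space, contains a non-empty perfect compact subset, hence a topological copy of the Cantor space $\{0,1\}^\N$. The construction proceeds via a Cantor scheme $(F_s)_{s\in\{0,1\}^{<\N}}$ of disjoint, shrinking non-empty closed sets whose diameters tend to zero, exploiting the existence of a point that is not a condensation point to split an uncountable closed set into two uncountable closed pieces at each stage. The resulting map $\{0,1\}^\N\to X$ is a continuous injection with compact image, hence a Borel embedding.

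Applying Borel Schröder--Bernstein to the two injections then gives the desired Borel isomorphism $X\simeq[0,1]$, and the continuum hypothesis for standard Borel spaces follows immediately, since every such space is either countable or in Borel bijection with $[0,1]$. The main technical difficulty throughout is the perfect set theorem: turning the abstract cardinality hypothesis into a concrete Cantor subscheme requires carefully maintaining uncountability at every stage of a countable recursion, which is where Polish completeness and second-countability are essential.
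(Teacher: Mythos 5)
The paper offers no proof of this statement: it is quoted as standard background from descriptive set theory, with the reference to Kechris's book given at the head of the subsection. Your proposal reconstructs the standard textbook proof of the Borel isomorphism theorem, and its architecture --- Borel Schr\"oder--Bernstein, a Borel injection $X\hookrightarrow\{0,1\}^\N\hookrightarrow[0,1]$ built from a countable base, and a topological embedding of $\{0,1\}^\N$ into $X$ via a Cantor scheme --- is the right one.

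Two points need attention. The substantive one is the claim that, in the Cantor--Bernstein construction, ``since $f$ and $g$ are Borel these pieces are Borel.'' This is not automatic, and it is where the real depth of the theorem is concentrated: the pieces are built from the images $f(X)$, $g(Y)$, $g(f(X))$, \dots, and a Borel image of a Borel set under a Borel map need not be Borel. What saves you is injectivity, via the Luzin--Suslin theorem that an \emph{injective} Borel image of a Borel set is Borel --- itself a nontrivial result resting on the separation theorem for analytic sets, and essentially the same phenomenon as the theorem stated just before this one in the paper (a measurable bijection between standard Borel spaces is automatically a Borel isomorphism). Without invoking or proving that, the Schr\"oder--Bernstein step is incomplete. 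The second point is minor: in the Cantor scheme, the splitting of an uncountable closed set into two disjoint uncountable closed pieces of small diameter uses the existence of two distinct \emph{condensation} points (the non-condensation points form a countable set by second countability), not ``a point that is not a condensation point'' as you wrote. You also silently use a Borel injection $[0,1]\hookrightarrow\{0,1\}^\N$ (binary expansion with a fixed convention at dyadic rationals); worth a sentence. With these repairs the argument is correct and is the standard one.
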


\subsection{Countable Borel equivalence relations}
\label{cber}

Let $\G$ be a countable group and $\G\acts X$ be a Borel action of it on a standard Borel space. By \emph{Borel action}, we mean that every $\gamma\in\G$ induces a Borel automorphism of $X$.
Such an action induces a partition of $X$ into orbits. Let us consider $R$ (or $R_{\G\agit X}$) the relation ``being in the same orbit'' and call it the \emph{orbit equivalence relation} of $\G \acts X$. It is a subset of $X^2$. Since $\G$ is countable, the following assertions hold:
\begin{itemize}
\item $R$ is \emph{countable}, i.e.\ every $R$-class is (finite or) countable,

\item $R$ is Borel, as a subset of $X^2$.
\end{itemize}
The following theorem provides the converse:
\begin{thm}[Feldman-Moore, \cite{fm}]
\label{fm} 
Every countable Borel equivalence relation on a standard Borel space is induced by a Borel action of some countable group.
\end{thm}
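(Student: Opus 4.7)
\textit{Proof plan.} My plan is to produce a countable group $\G$ and a Borel action $\G \acts X$ whose orbit partition coincides with the $R$-classes. The strategy has three parts: decompose $R$ into countably many Borel graphs of partial bijections; extend each partial bijection to a Borel automorphism of $X$ whose graph stays inside $R$; and take the free group on those automorphisms as $\G$. For the first part, I would invoke the Luzin--Novikov uniformization theorem: since $R \subset X \times X$ is Borel with countable vertical sections, there exist Borel partial functions $f_n \colon A_n \to X$ (with $A_n \subset X$ Borel) such that $R = \bigcup_n \mathrm{graph}(f_n)$. Because each $f_n$ also has countable fibres (the fibres being contained in $R$-classes), a further Borel refinement splits each $A_n$ into countably many Borel pieces on which $f_n$ is injective, so, after relabelling, we may assume that each $f_n$ is a Borel bijection between Borel subsets $A_n, B_n \subseteq X$, with $\mathrm{graph}(f_n) \subseteq R$.

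The heart of the argument is to extend each $f_n$ to a Borel automorphism $T_n$ of $X$ whose graph is still contained in $R$. The idea is to examine the connected components of the directed graph obtained by drawing an edge from $x$ to $f_n(x)$ for each $x \in A_n$: each component is a finite cycle, a finite segment (one endpoint in $A_n \setminus B_n$ and the other in $B_n \setminus A_n$), a one-sided infinite ray (missing either a source or a sink), or a bi-infinite path. Finite cycles and bi-infinite paths are already bijective; finite segments are closed into cycles by sending the sink back to the source; one-sided rays of the two types must be paired up to form bi-infinite orbits; and on the complement of $A_n \cup B_n$ we set $T_n = \mathrm{id}$. All of these moves stay inside a single $R$-class, so $\mathrm{graph}(T_n) \subseteq R$. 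The main obstacle is to perform the pairing of one-sided rays in a Borel fashion; this is handled by detecting the orbit type measurably (a Borel condition on $x$), and then using a Borel uniformization within each $R$-class to choose the pairing coherently.

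Once the family $(T_n)_{n \in \N}$ is in hand, finishing the argument is routine: take $\G$ to be the free group on countably many generators $\gamma_n$ (hence a countable group), and let it act on $X$ by sending $\gamma_n$ to $T_n$ and extending by composition and inverses. This action is Borel because each $T_n$ is. Then $R_{\G \acts X} \subseteq R$ since each $T_n$ preserves $R$-classes, and $R \subseteq R_{\G \acts X}$ since any pair $(x,y) \in R$ lies in some $\mathrm{graph}(f_n) \subseteq \mathrm{graph}(T_n)$, so $y = T_n(x) = \gamma_n \cdot x$. Hence $R = R_{\G \acts X}$, which is exactly the statement of the theorem.
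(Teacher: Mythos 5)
The paper itself offers no proof of this theorem --- it is quoted from Feldman--Moore \cite{fm} --- so I am comparing your argument against the standard one. Your overall strategy (Luzin--Novikov decomposition of $R$ into graphs of partial Borel injections, then generating a countable group of Borel automorphisms) is exactly the classical strategy, and the first and third parts of your argument are sound. The gap is in the middle step: it is \emph{not} true that every partial Borel bijection $f_n\colon A_n\to B_n$ with graph contained in $R$ extends to a Borel automorphism $T_n$ of $X$ with $\mathrm{graph}(f_n)\subseteq\mathrm{graph}(T_n)\subseteq R$. Take $X=\mathbb{N}\times Z$ with $Z$ an uncountable standard Borel space, let $R$ relate $(m,z)$ and $(n,z)$ for all $m,n$, and let $f(n,z)=(n+1,z)$. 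This is a Borel injection with graph in $R$, defined on all of $X$, whose range misses $\{0\}\times Z$: every orbit is a one-sided ray with a source and no sink, there are no rays of the opposite type to pair with, and in fact no bijection of $X$ whatsoever extends $f$. So the pairing of one-sided rays cannot be carried out in general, and even where both ray types coexist, the phrase ``using a Borel uniformization within each $R$-class to choose the pairing coherently'' is doing a great deal of unexamined work.

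The repair is the classical Feldman--Moore device, which replaces ``extend $f_n$ to a single automorphism'' by ``cover $\mathrm{graph}(f_n)$ by the graphs of countably many automorphisms'' --- and covering is all your final paragraph actually uses. Fix a countable family $(S_k)$ of Borel sets separating the points of $X$. For each off-diagonal pair $(x,f_n(x))$ there is a $k$ with $x\in S_k$ and $f_n(x)\notin S_k$, so the Borel sets $A_{n,k}:=\{x\in A_n : x\in S_k,\ f_n(x)\notin S_k\}$ cover the off-diagonal part of $\mathrm{graph}(f_n)$, and $f_n$ restricted to $A_{n,k}$ has domain and range \emph{disjoint}. Hence the map $g_{n,k}$ that swaps $x$ and $f_n(x)$ for $x\in A_{n,k}$ and fixes every other point is a Borel \emph{involution} of $X$ with graph in $R$; no orbit analysis is needed. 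The diagonal of $R$ is covered by the identity. The countable group generated by the $g_{n,k}$ (or the free group mapped onto them, as you prefer) then induces $R$ exactly as in your concluding paragraph.
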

In other words, every countable Borel equivalence relation on a standard Borel space is an orbit equivalence relation. This is why the theory of ``countable Borel equivalence relations'' is called ``orbit equivalence theory''.

\subsection{Measure invariance}

When dealing with a Borel action of $\G$ on a probability space, it makes sense to speak of invariance of the probability measure. The purpose of this subsection is to define this notion for countable Borel equivalence relations. To begin with, one needs to know how the standard Borel space behaves when it is endowed with a probability measure.

\begin{defi}
A \emph{standard probability space} is a standard Borel space endowed with a probability measure. 
\end{defi}

\begin{thm}
\label{unicprob}
Every atomless standard probability space $(X,\mu)$ is isomorphic to $[0,1]$ endowed with its Borel $\sigma$-algebra and the Lebesgue measure, i.e. there exists a measure-preserving Borel isomorphism between $(X,\mu)$ and $([0,1],\emph{d}x)$.
\end{thm}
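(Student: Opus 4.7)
The plan is to reduce the statement, via Theorem~\ref{unicbor}, to the case $X=[0,1]$ endowed with an atomless Borel probability measure $\mu$ (the atomless hypothesis forces $X$ to be uncountable, since any probability measure on a countable standard Borel space is a sum of Dirac masses and hence has an atom). Once on $[0,1]$, I would build the desired isomorphism from the cumulative distribution function
$$
F\colon[0,1]\longrightarrow[0,1],\qquad F(x):=\mu([0,x]).
$$
Because $\mu$ has no atoms, $F$ is continuous; it is also non-decreasing with $F(0)=0$ and $F(1)=1$. A direct computation on intervals of the form $[0,y]$ shows that $F_*\mu$ equals Lebesgue measure.

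The map $F$ fails to be a Borel isomorphism only because of its flat pieces. The maximal open intervals on which $F$ is constant form a countable family $(a_n,b_n)_n$, since each such interval contains a rational. Setting $N:=\bigcup_n[a_n,b_n]$ and $E:=\{F(a_n):n\}$, it is routine to check that $\mu(N)=0$, that $E$ is countable (hence Lebesgue-null), and that the restriction
$$
F\colon[0,1]\setminus N\longrightarrow[0,1]\setminus E
$$
is a strictly increasing continuous bijection: injectivity comes from the maximality of the constancy intervals, surjectivity from the intermediate value theorem, and the inverse is continuous as a strictly monotone function on a subset of $\R$. Thus this restriction is a measure-preserving Borel isomorphism between $(\,[0,1]\setminus N,\,\mu\,)$ and $(\,[0,1]\setminus E,\,\text{Leb}\,)$.

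The remaining task is to extend this partial isomorphism to all of $[0,1]$. The main subtlety is a cardinality mismatch: $N$ is uncountable (a countable union of non-degenerate closed intervals) while $E$ is merely countable, so Theorem~\ref{unicbor} cannot be applied directly between $N$ and $E$. I fix this by enlarging $E$: pick an uncountable Lebesgue-null Borel subset $D\subset[0,1]\setminus E$ (for instance a Cantor-type set translated to avoid the countable set $E$), and set $E':=E\cup D$ and $N':=N\cup F^{-1}(D)$. Since $F_*\mu$ is Lebesgue measure, $\mu(F^{-1}(D))=\text{Leb}(D)=0$, so $N'$ remains $\mu$-null; and $E'$ is an uncountable Lebesgue-null Borel set. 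Theorem~\ref{unicbor} now provides a Borel isomorphism $\psi\colon N'\to E'$, and the formula
$$
\Phi(x):=\begin{cases}F(x)&\text{if }x\in[0,1]\setminus N',\\ \psi(x)&\text{if }x\in N'\end{cases}
$$
defines a Borel automorphism of $[0,1]$. It is measure-preserving, as the modification occurs on a pair of null sets and $F$ already transports $\mu|_{[0,1]\setminus N'}$ to $\text{Leb}|_{[0,1]\setminus E'}$.

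The one delicate point in this plan is the gluing step of the last paragraph; once one notices that the only real obstruction is a cardinality mismatch between the null exceptional sets, a small enlargement trick together with Theorem~\ref{unicbor} makes everything routine.
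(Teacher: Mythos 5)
Your argument is correct. Note, however, that the paper offers no proof of theorem~\ref{unicprob}: it is stated as standard background material (the reference for this part of the exposition is \cite{k}), so there is nothing internal to compare against. What you wrote is the classical cumulative-distribution-function proof of the Borel isomorphism theorem for measures, and the details check out: the reduction to $[0,1]$ via theorem~\ref{unicbor} is legitimate (atomlessness does rule out countable $X$); each maximal constancy interval $[a_n,b_n]$ is $\mu$-null because $F$ is continuous and constant on it, so $\mu(N)=0$; the restriction of $F$ to $[0,1]\setminus N$ really is a bijection onto $[0,1]\setminus E$ (a point $x$ with $F(x)\in E$ must lie in some $[a_n,b_n]$ by monotonicity and maximality); and your enlargement trick correctly repairs the cardinality mismatch between $N$ and $E$, since $F^{-1}(D)$ is an uncountable Borel $\mu$-null set and any two uncountable standard Borel spaces are Borel isomorphic by theorem~\ref{unicbor}. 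The only cosmetic quibble is the claim that the inverse of the restricted $F$ is continuous --- continuity can fail at gaps of the domain --- but this is irrelevant: a monotone injection between subsets of $\R$ is automatically Borel, and in any case the paper's stated theorem that a measurable bijection between standard Borel spaces is a Borel isomorphism covers it.
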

\textsc{Throughout this paper, standard probability spaces will implicitly be assumed to be atomless.}

\vspace{0.5 cm}
Having a nice measured space to work on is not enough to provide a notion of invariance of the measure: to do so, one needs relevant transformations, presented below.

\begin{defi}
If $R$ is a countable Borel equivalence relation, $[R]$ denotes the group of the Borel automorphisms of $X$ whose graph is included in $R$.  A \emph{partial Borel automorphism} of $X$ is a Borel isomorphism between two Borel subsets of $X$. One denotes by $[[R]]$ the set of partial Borel automorphisms whose graph is included in $R$.
\end{defi}

\begin{rem}
In the literature, $X$ is often equipped with a ``nice'' probability measure\footnote{Here, ``nice'' means ``$R$-invariant'', which will be defined using $[R]$ (as defined above).}, and one often uses $[R]$ and $[[R]]$ to denote the objects defined above quotiented out by almost everywhere agreement. In this paper, we will stick to the definition we gave, which can be found in \cite{km}. 
\end{rem}
As exemplified by the theorem below, these Borel automorphisms allow us to mimic intrinsically the ``group action'' definitions in the ``orbit equivalence'' setting.

\begin{thm}
\label{preserv}
Let $R$ be a countable Borel equivalence relation on a standard probability space $(X,\mu)$. The following assertions are equivalent:

\begin{itemize}
\item there exist $\G$ a countable group and $\G\agit X$ a measure-preserving Borel action of it such that $R=R_{\G\agit X}$,

\item every  Borel action of a countable group that induces $R$ preserves $\mu$,

\item every element of $[R]$ preserves $\mu$.
\end{itemize}
When any of these equivalent properties is satisfied, we say that the measure $\mu$ is \emph{preserved} by $R$, or that it is $R$-\emph{invariant}.
\end{thm}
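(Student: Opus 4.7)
The natural approach is a cyclic chain of implications: (iv) $\Rightarrow$ (iii) $\Rightarrow$ (ii) $\Rightarrow$ (i) $\Rightarrow$ (iv). The first three implications are essentially immediate. Any total Borel automorphism whose graph lies in $R$ is in particular a partial Borel automorphism, giving (iv) $\Rightarrow$ (iii). Given any Borel action $\Gamma \acts X$ inducing $R$, each $\gamma \in \Gamma$ acts as a Borel automorphism of $X$ whose graph is contained in $R$, hence lies in $[R]$, yielding (iii) $\Rightarrow$ (ii). For (ii) $\Rightarrow$ (i), Feldman--Moore (theorem~\ref{fm}) provides at least one countable group $\Gamma$ with a Borel action on $X$ inducing $R$, and (ii) says it preserves $\mu$.

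The main content of the theorem is therefore (i) $\Rightarrow$ (iv), and this is where I would focus the argument. Fix an enumeration $\Gamma = \{\gamma_n\}_{n \in \N}$ of the countable group given by (i), and let $\varphi : A \to B$ be a partial Borel automorphism in $[[R]]$. Since the graph of $\varphi$ is contained in $R = R_{\Gamma \acts X}$, for every $x \in A$ there exists some $n$ with $\varphi(x) = \gamma_n \cdot x$. The key trick is to slice $A$ along the ``first'' such $n$: set
\[
A_n := \{x \in A : \varphi(x) = \gamma_n \cdot x\} \setminus \bigcup_{k < n}\{x \in A : \varphi(x) = \gamma_k \cdot x\}.
\]
Because both $\varphi$ and each map $x \mapsto \gamma_k \cdot x$ are Borel, each $A_n$ is a Borel subset of $A$, and the $A_n$ form a Borel partition of $A$.

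On each piece $A_n$ the map $\varphi$ agrees with the restriction of $\gamma_n$, so $\varphi(A_n) = \gamma_n \cdot A_n$. Since the action is assumed to preserve $\mu$, we get $\mu(\varphi(A_n)) = \mu(\gamma_n \cdot A_n) = \mu(A_n)$, and more generally $\mu(\varphi(E \cap A_n)) = \mu(E \cap A_n)$ for every Borel $E \subseteq X$. Summing over $n$ (the images $\varphi(A_n)$ are disjoint, as $\varphi$ is injective) and using countable additivity yields $\mu(\varphi(E)) = \mu(E)$ for every Borel $E \subseteq A$, i.e.\ $\varphi$ preserves $\mu$. This closes the loop.

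I do not anticipate a real obstacle: the only subtlety is to make sure the slicing in (i) $\Rightarrow$ (iv) produces genuinely measurable sets and a genuine partition, which is why one must refine the naive level sets $\{x : \varphi(x) = \gamma_n \cdot x\}$ by removing the previous strata via the enumeration of $\Gamma$. Everything else is bookkeeping.
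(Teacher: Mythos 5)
Your proof is correct: the three easy implications are handled properly, and the slicing of a partial automorphism $\varphi\in[[R]]$ into Borel pieces $A_n$ on which it agrees with a fixed group element is exactly the standard argument (the level sets are Borel because the diagonal of $X\times X$ is Borel in a standard Borel space, and $\varphi(E)$ is Borel by Lusin--Souslin, as built into the paper's definition of partial Borel automorphism). The paper itself states this theorem without proof, citing it as standard material from \cite{km} and \cite{gicm}, and the argument given there is the same one you reconstructed.
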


\vspace{0.5 cm}

\textsc{Henceforth,  $(X,\mu)$ will always be an atomless standard probability space and the equivalence relations we will consider on it will always be measure-preserving countable Borel equivalence relations.}

\vspace{0.5 cm}

\begin{rem}
There is no uniqueness theorem (analogous to Theorem~\ref{unicbor} or Theorem~\ref{unicprob}) for the object $(X,\mu,R)$. This is why orbit equivalence theory is not empty. Another fact to keep in mind  is that the space $X/R$ essentially never bears a natural standard Borel structure, even though $R$ is Borel.
\end{rem}

\subsection{Amenability and hyperfiniteness}
Amenability of a group can be defined in many equivalent ways. For our purpose, the following characterization will be enough.

\begin{thm}
A countable group $\G$ is amenable if and only if there exists a \emph{Reiter sequence}, i.e. $f_n\in \ell^1(\G)$ such that:
\begin{itemize}
\item $\forall n, f_n \geq 0$ and $\|f_n\|_1=1$,

\item $\forall \gamma\in\G, \|f_n-\gamma\cdot f_n\|_1\underset{n\to\infty}{\longrightarrow}0.$
\end{itemize}
\end{thm}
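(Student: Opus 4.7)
I plan to use as the starting definition of amenability the existence of a \emph{F\o lner sequence}: a sequence $(F_n)$ of non-empty finite subsets of $\G$ with $|F_n\triangle \gamma F_n|/|F_n|\to 0$ for every $\gamma\in\G$. The equivalence of this with the existence of a Reiter sequence will be proved in two steps.

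For the easy direction ($\Rightarrow$), given a F\o lner sequence $(F_n)$, I set
\[
f_n:=\frac{\mathbbm{1}_{F_n}}{|F_n|}\in\ell^1(\G).
\]
Each $f_n$ is nonnegative of $\ell^1$-norm $1$, and a direct computation yields $\|f_n-\gamma\cdot f_n\|_1=|F_n\triangle \gamma F_n|/|F_n|$, which tends to $0$ for every fixed $\gamma$. So $(f_n)$ is a Reiter sequence.

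For the harder direction ($\Leftarrow$), the key device is the layer-cake (coarea) identity: for any nonnegative $f\in\ell^1(\G)$, writing $F^t:=\{g\in\G:f(g)>t\}$,
\[
\|f\|_1=\int_0^{+\infty}|F^t|\,dt, \qquad \|f-\gamma\cdot f\|_1=\int_0^{+\infty}|F^t\triangle \gamma F^t|\,dt.
\]
The second identity uses that $(\gamma\cdot f)(g)=f(\gamma^{-1}g)$ so $(\gamma\cdot f)^{-1}(t,+\infty)=\gamma F^t$, combined with the pointwise equality $|f(g)-f(\gamma^{-1}g)|=\int_0^{+\infty}\mathbbm{1}_{F^t\triangle \gamma F^t}(g)\,dt$ summed over $g$.

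Given a Reiter sequence $(f_n)$, I fix an enumeration $\G=\{\gamma_1,\gamma_2,\dots\}$ and, for each $n$, integrate the two identities against $f_n$ for the finite set $S_n:=\{\gamma_1,\dots,\gamma_n\}$. Assuming (by passing to a subsequence) $\sum_{\gamma\in S_n}\|f_n-\gamma\cdot f_n\|_1\leq \varepsilon_n$ with $\varepsilon_n\to 0$, the quotient
\[
\Phi_n(t):=\frac{\sum_{\gamma\in S_n}|F_n^t\triangle \gamma F_n^t|}{|F_n^t|}
\]
satisfies $\int_0^{+\infty}|F_n^t|\Phi_n(t)\,dt\leq\varepsilon_n$ while $\int_0^{+\infty}|F_n^t|\,dt=1$. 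A mean-value argument then produces some $t_n>0$ with $\Phi_n(t_n)\leq \varepsilon_n$ and $F_n^{t_n}$ nonempty; the finite sets $F_n:=F_n^{t_n}$ form a F\o lner sequence by construction (fix $\gamma=\gamma_k$; then for all $n\geq k$, $|F_n\triangle \gamma F_n|/|F_n|\leq\Phi_n(t_n)\leq\varepsilon_n\to 0$).

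The main obstacle is the second step: turning an approximately invariant $\ell^1$ function into an approximately invariant \emph{set} while keeping control over all generators simultaneously. The layer-cake identity coupled with the averaging/pigeonhole argument on the level sets is exactly what makes this work, and a diagonal extraction handles the ``all $\gamma\in\G$'' quantifier.
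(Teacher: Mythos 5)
The paper states this theorem as standard background and gives no proof of its own, so there is nothing to compare against; your argument stands on its own. What you propose is the classical F\o lner--Reiter equivalence via Namioka's layer-cake trick, and it is correct: the forward direction with $f_n=\mathbbm{1}_{F_n}/|F_n|$ is a one-line computation, and the reverse direction correctly combines the coarea identities $\|f\|_1=\int_0^\infty|F^t|\,dt$ and $\|f-\gamma\cdot f\|_1=\int_0^\infty|F^t\triangle\gamma F^t|\,dt$ with an averaging/pigeonhole selection of a good level $t_n$ and a diagonal extraction over an exhaustion $S_n$ of $\G$. Two small points you should make explicit: the level sets $F^t_n$ are \emph{finite} for $t>0$ precisely because $f_n\in\ell^1(\G)$ (otherwise the candidate F\o lner sets need not be finite), and the mean-value step should be phrased as ``the integrand $\sum_{\gamma\in S_n}|F_n^t\triangle\gamma F_n^t|-\varepsilon_n|F_n^t|$ cannot be positive for every $t$ in the set of positive measure where $F_n^t\neq\emptyset$'', which avoids dividing by $|F_n^t|$ where it vanishes. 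With those clarifications the proof is complete; the only implicit convention is that you take the F\o lner condition as your working definition of amenability, which is consistent with the paper's stance that any of the standard equivalent definitions may be used.
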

\begin{small}
In the theorem above, $\G$ acts on $\ell^1(\G)$ via $\gamma\cdot f(\eta):=f(\gamma^{-1}\eta)$. Taking the inverse of $\gamma$ guarantees that this defines a left action. Besides, the action it induces on indicator functions corresponds to the natural action $\G\acts\text{Subsets}(\G)$, i.e.\ we have $\gamma\cdot 1_A=1_{\gamma\cdot A}$.
\end{small}

\vspace{0.3 cm}

This theorem in mind, the following definition of amenability for equivalence relations is natural.

\begin{defi}
Let $R$ be a countable Borel equivalence relation on $(X,\mu)$. One says that $R$ is \emph{$\mu$-amenable} if and only if there exists a sequence of \emph{Borel} functions $f_n : R \to \mathbb{R}^+$ such that:
\begin{itemize}
\item $\forall x\in X,~ \sum_{y\in [x]_R} f_{n}(x,y)=1$,

\item there exists a full-measure $R$-invariant Borel subset $A\subset X$ such that $$\forall (x,y) \in (A\times A)\cap R, \sum_{z\in[x]_R}|f_n(x,z)-f_{n}(y,z)| \underset{n\to\infty}{\longrightarrow} 0.$$
\end{itemize}
\end{defi}

\begin{small}
\begin{comm}
In the definition above (and in others), one can indifferently impose $A$ to be $R$-invariant or not. Indeed, it can be deduced from Theorem~\ref{fm} that the $R$-saturation of a $\mu$-negligible set is still $\mu$-negligible. (Recall that all considered equivalence relations are tacitly assumed to preserve the measure.)
\end{comm}
\end{small}

Proposition~\ref{gener} shows that this definition is a nice extension of the classic notion of amenability (for countable groups) to equivalence relations.

\begin{nota}
Let $\G\acts X$ be  a Borel action of a countable group on a standard Borel space. If $X$ is endowed with an atomless probability measure $\mu$ that is $\G$-invariant, we will write $\G\acts (X,\mu)$. 
\end{nota}

\begin{prop}
\label{gener}
Let $\G\agit (X,\mu)$ be a measure-preserving action of a countable group. If $\G$ is amenable, then $R_{\G\agit X}$ is $\mu$-amenable. Besides, if $\G\acts X$ is free, then the converse holds. 
\end{prop}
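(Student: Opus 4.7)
The plan is to treat the two implications separately, by constructing explicit witnesses in each direction.

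For the direct implication, starting from a Reiter sequence $(f_n)_n$ for $\G$, I would transport it to $R$ by setting
$$
F_n(x,y) \;:=\; \sum_{\gamma \in \G \,:\, \gamma x = y} f_n(\gamma^{-1}).
$$
Each graph $\{(x,\gamma x) : x \in X\}$ is Borel because $\gamma$ acts as a Borel automorphism, so $F_n$ is a non-negative Borel function on $R$. Summing over $y \in [x]_R$ collapses to $\sum_{\gamma \in \G} f_n(\gamma^{-1}) = \|f_n\|_1 = 1$. For any pair $(x, \eta x) \in R$, the substitution $\gamma \mapsto \gamma \eta$ in $F_n(\eta x, \cdot)$ and the triangle inequality yield
$$
\sum_{z \in [x]_R} \lvert F_n(x,z) - F_n(\eta x, z) \rvert \;\leq\; \sum_{\alpha \in \G} \lvert f_n(\alpha) - f_n(\eta \alpha) \rvert \;=\; \|f_n - \eta^{-1}\!\cdot f_n\|_1,
$$
which tends to $0$ by the Reiter property. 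In particular no exceptional set is needed and one may take $A = X$.

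For the converse, assume $\G \acts X$ is free and let $(F_n)_n$ witness the $\mu$-amenability of $R$ with exceptional set $X \setminus A$. Freeness yields a canonical bijection $[x]_R \leftrightarrow \G$ via $\gamma \mapsto \gamma x$, so I would pull back the $F_n$ to functions $\varphi_n^x(\gamma) := F_n(x,\gamma x) \in \ell^1(\G)$, each of norm one. Rewriting the asymptotic invariance of $F_n$ on pairs $(x, \eta x) \in (A \times A) \cap R$ in these coordinates, via $F_n(\eta x, \gamma x) = \varphi_n^{\eta x}(\gamma \eta^{-1})$, gives
$$
\sum_{\gamma \in \G} \lvert \varphi_n^x(\gamma) - \varphi_n^{\eta x}(\gamma \eta^{-1}) \rvert \;\xrightarrow[n \to \infty]{}\; 0 \qquad \text{for $\mu$-a.e. } x \text{ and every } \eta \in \G
$$
(the exceptional $x$'s form a countable union of null sets, one for each $\eta$). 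I would then average in $x$: set $f_n(\gamma) := \int_X \varphi_n^x(\gamma)\, d\mu(x)$, so that $\|f_n\|_1 = 1$. Using $\mu$-invariance of $\eta$ to identify $\int \varphi_n^{\eta x}(\gamma \eta^{-1})\,d\mu(x) = f_n(\gamma \eta^{-1})$, then swapping the sum and the integral and invoking dominated convergence on the displayed integrand (pointwise bounded by $2$), I obtain $\sum_\gamma \lvert f_n(\gamma) - f_n(\gamma \eta^{-1}) \rvert \to 0$ for every $\eta$. This is a \emph{right}-Reiter sequence, and the symmetrization $\check f_n(\gamma) := f_n(\gamma^{-1})$ recovers the paper's left convention.

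The main obstacle lies in the converse: the pointwise-in-$x$ asymptotic invariance produces a family of functions on $\G$ that is not directly a Reiter sequence, and squeezing it into one by averaging requires simultaneously the measure-preserving hypothesis (for the change of variable $x \mapsto \eta x$ that turns $\int \varphi_n^{\eta x}$ into $\int \varphi_n^x$) and a dominated convergence argument to swap integration with the $n \to \infty$ limit. Freeness is equally indispensable: without it the identification $[x]_R \simeq \G$ breaks down and the natural definition of $\varphi_n^x$ loses meaning.
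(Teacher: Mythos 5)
Your argument is correct on both sides: pushing the Reiter sequence forward along the orbit map (with the inverse inserted so that the group convention matches), and, for the converse, using freeness to read each $F_n(x,\cdot)$ as an element of $\ell^1(\G)$ and averaging over $x$ with dominated convergence. The paper actually states Proposition~\ref{gener} without proof, deferring to its standard references, and what you wrote is precisely the standard argument one finds there; the only remark worth adding is that since the paper's definition already takes the exceptional set $A$ to be $R$-invariant and of full measure, your ``countable union of null sets'' caveat in the converse is not even needed.
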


It is easy to see that \emph{finite} equivalence relations (i.e.\ whose classes are finite) are amenable: one just needs to set $f_{n}(x,y)=\frac{1}{|[x]_R|}1_{y\in[x]_R}$. The proof  naturally extends to hyperfinite equivalence relations, defined below.

\begin{defi}
An equivalence relation $R$ on a standard Borel space $X$ is said to be \emph{hyperfinite} if it is a countable increasing union of finite Borel equivalence subrelations. (No measure appears in this definition.) If $\mu$ is an $R$-invariant probability measure on $X$, the relation $R$ is \emph{hyperfinite $\mu$-almost everywhere} if there exists a full-measure Borel subset $A\subset X$ such that $R\cap (A\times A)$ is hyperfinite. 
\end{defi}

\begin{exem}
The group $\G_\infty := \bigoplus_{n\in \N} \Z/2\Z$ is the increasing union of the subgroups $\G_N := \bigoplus_{n\leq N} \Z/2\Z$. Hence, any $R_{\G_\infty\agit X}$ is hyperfinite.  Besides, $\G_\infty$ is amenable: set $f_n = \frac{1_{\G_n}}{|\G_n|}$. Hence, any $R_{\G_\infty\agit (X,\mu)}$ is $\mu$-amenable.
\end{exem}

\begin{thm}[Connes-Feldman-Weiss, \cite{cfw}]
Let $R$ be a Borel countable equivalence relation on $(X,\mu)$. The relation $R$ is $\mu$-amenable if and only if it is hyperfinite $\mu$-almost everywhere.
\end{thm}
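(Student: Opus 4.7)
The plan is to handle the two implications separately, as they differ enormously in difficulty.

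Hyperfiniteness $\Rightarrow$ $\mu$-amenability is direct. Let $A\subset X$ be a full-measure Borel set on which $R$ writes as an increasing union $\bigcup_n R_n$ of finite Borel equivalence subrelations, and define
\[ f_n(x,y) := \frac{1}{|[x]_{R_n}|}\mathbf{1}_{y\in [x]_{R_n}}\quad\text{for }(x,y)\in R\cap(A\times A), \]
extending arbitrarily outside. Then $\sum_{y\in[x]_R} f_n(x,y)=1$, and for any fixed $(x,y)\in R\cap(A\times A)$ one has $x R_n y$ for all $n$ large, hence $[x]_{R_n}=[y]_{R_n}$ and $f_n(x,\cdot)\equiv f_n(y,\cdot)$; the $\ell^1$-difference is then eventually zero, not merely tending to zero.

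For the converse, $\mu$-amenability $\Rightarrow$ hyperfiniteness $\mu$-a.e., my strategy is combinatorial. By Feldman--Moore (theorem~\ref{fm}), fix a countable group $\G$ and a Borel action $\G\acts X$ with $R_{\G\acts X}=R$. Interpret each $f_n(x,\cdot)$ as an almost $\G$-invariant probability measure on $[x]_R$: averaging the almost-invariance gives $\int\|f_n(x,\cdot)-f_n(\gamma x,\cdot)\|_1\,d\mu(x)\to 0$ for every $\gamma\in\G$. The crucial step is then converting almost invariant \emph{functions} into almost invariant \emph{sets}: for each finite $S\subset\G$ and each $\varepsilon>0$, by slicing $f_n$ along its level sets and picking an appropriate threshold, one produces, on a Borel set of $x$ of measure close to $1$, a Borel assignment $x\mapsto F(x)$ where $F(x)\subset[x]_R$ is finite and satisfies $|S\cdot F(x)\setminus F(x)|<\varepsilon|F(x)|$.

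One then applies a Borel version of the Ornstein--Weiss quasi-tiling theorem: via a maximal disjoint family argument on the Borel graph of $\G\acts X$, cover almost every $R$-class by disjoint $\G$-translates of finitely many such sets $F(x_i)$. The resulting partition of (almost all of) $X$ into tiles defines a finite Borel equivalence subrelation. Running the construction along a cofinal sequence $\varepsilon_k\to 0$, $S_k\nearrow\G$, and refining at each stage to force monotonicity, one extracts an increasing exhaustion $R=\bigcup_k R_k$ on a full-measure $R$-invariant Borel subset, which is the required hyperfiniteness $\mu$-almost everywhere.

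The main obstacle is precisely the Borel quasi-tiling step. Amenability, as defined, is a pointwise measure-theoretic property, while hyperfiniteness demands globally coherent Borel data; bridging this gap requires a measurable incarnation of the Ornstein--Weiss machinery, with careful marker selections and uniform control of the sizes and boundaries of the tiles. This is the technical heart of the Connes--Feldman--Weiss theorem and is where the real work lies; everything else in the sketch above is either bookkeeping or a direct translation of the classical amenable-group constructions into the orbit-equivalence framework.
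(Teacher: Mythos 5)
The paper does not prove this statement: it is quoted as a black box from Connes--Feldman--Weiss \cite{cfw}, so there is no internal argument to compare yours against. Judged on its own, your proposal splits into one complete half and one half that is only a programme. The direction ``hyperfinite $\mu$-a.e.\ $\Rightarrow$ $\mu$-amenable'' is correct and is essentially the observation the paper itself makes just before stating the theorem (the uniform measures $f_n(x,\cdot)$ on the finite $R_n$-classes are eventually equal for $R$-equivalent points). Two small points of hygiene: ``extending arbitrarily outside $A$'' should be ``extend by $f_n(x,y)=1_{y=x}$'' or similar, since the normalization $\sum_{y\in[x]_R}f_n(x,y)=1$ is required for \emph{every} $x$, and one should either take $A$ to be $R$-invariant or invoke the paper's remark that the $R$-saturation of a negligible set is negligible.

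The converse is where the genuine gap lies, and you have correctly located it but not closed it. Passing from the almost-invariant probability vectors $f_n(x,\cdot)$ to Borel-measurable F{\o}lner-type finite sets $F(x)\subset[x]_R$, and then to a \emph{nested} exhaustion of $R$ by finite Borel subrelations on a conull set, is the entire content of the Connes--Feldman--Weiss theorem; your sketch names the steps (level-set slicing, a measurable Ornstein--Weiss quasi-tiling via maximal disjoint families, ``refining at each stage to force monotonicity'') without carrying any of them out. In particular, the monotonicity step is not a routine refinement: a relation that is, for each $k$, approximated by some finite subrelation $R_k$ need not admit an \emph{increasing} such sequence without a further argument (this is where the original proof, and the modern treatments in Kechris--Miller, do real work). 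So the proposal should be read as a correct statement of the proof architecture for the hard direction, not as a proof of it; as submitted it establishes only the easy implication.
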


\subsection{Ergodicity}

\label{ergodicity}

\begin{defi}
Let $\G\acts(X,\mu)$ be a measure-preserving action. It is said to be \emph{ergodic} if, for every $\Gamma$-invariant Borel subset $B$ of $X$,  either $\mu(B)=0$ or $\mu(B)=1$.
\end{defi}

\begin{defi}
An equivalence relation $R$ on a standard probability space $(X,\mu)$ is said to be \emph{ergodic} (or \emph{$\mu$-ergodic}) if, for every $R$-invariant Borel subset $B$ of $X$,  either $\mu(B)=0$ or $\mu(B)=1$.
\end{defi}

\begin{rem}
Let $\G\acts (X,\mu)$ be a measure-preserving group action. Let $B$ be a subset of $X$. Notice that it is the same for $B$ to be $\G$-invariant or $R_{\G\acts X}$-invariant. This means that the following assertions are equivalent:
\begin{itemize}
\item $\forall \gamma\in\G, \gamma\cdot B= B$,

\item $\forall x\in B, \forall y \in X, x R_{\G\acts X}y \implies y\in B$.
\end{itemize}
In particular, $\G\acts X$ is ergodic if and only if $R_{\G\acts X}$ is ergodic.
\end{rem}

\begin{bexem}
Let $\G$ be an infinite countable group and $(\Sigma, \nu)$ denote either $([0,1],\text{Leb})$ or $(\{0,1\}, \text{Ber}(p))=(\{0,1\}, (1-p)\delta_0+p\delta_1)$. Let $A$ denote either $\G$ or the edge-set of a Cayley graph of $\G$. (The notion of Cayley graph is introduced in Subsection~\ref{gendef}.) Let $\mathcal{S}$ be the equivalence relation induced by the shift action of $\G$ on $ \left(\Sigma^A,\nu^{\otimes A}\right)$ defined by $$\gamma\cdot(\sigma_a)_{a\in A}=(\sigma_{\gamma^{-1}\cdot a})_{a\in A}.$$ This equivalence relation preserves $\nu^{\otimes A}$ and is ergodic.
\end{bexem}
The following theorem states that the amenable world shrinks to a point from the orbital point of view.

\begin{thm}[Dye, \cite{d}] Every countable Borel equivalence relation that is ergodic and hyperfinite $\mu$-almost everywhere is isomorphic to the orbit equivalence relation of the Bernoulli shift $\left(\Z\agit\left(\{0,1\}^\Z,\text{\emph{Ber}}(1/2)^{\otimes \Z}\right)\right)$. This means that if $R$ is such a relation on a standard probability space $(X,\mu)$, there exist
\begin{itemize}
\item a full-measure $R$-invariant Borel subset $A$ of $X$,

\item  a full-measure $\Z$-invariant Borel subset $B$ of $\{0,1\}^{\Z}$,

\item a measure-preserving Borel isomorphism $f:A\to B$
\end{itemize}
such that
$
\forall x,y \in A, x R y \Longleftrightarrow f(x)R_{\Z\agit \{0,1\}^{\Z}} f(y).
$
\end{thm}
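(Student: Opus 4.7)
My plan is to prove the statement in two stages: first reduce the problem to the case of $\Z$-actions, then invoke Dye's original result on orbit equivalence of ergodic measure-preserving $\Z$-actions.

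For the reduction, I exploit the almost-everywhere hyperfinite structure. Restrict to a full-measure $R$-invariant Borel set $A$ on which $R = \bigcup_n R_n$ with $R_n \subset R_{n+1}$ finite Borel equivalence subrelations. Since each $R_n$-class is finite, I can choose in a Borel way a cyclic ordering on each $R_n$-class, arranged to be compatible with the finer ordering at level $n+1$ (extending orderings inductively by lifting representatives from $A/R_n$ to $A/R_{n+1}$). The resulting successor map is a Borel automorphism $T$ of $A$ whose orbits are precisely the $R$-classes, so $R\cap (A\times A) = R_{\Z \acts A}$ for the generated $\Z$-action, and $T$ preserves $\mu$ because $R$ does. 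Since the Bernoulli shift is itself a $\Z$-action, the theorem reduces to: any two ergodic measure-preserving $\Z$-actions on atomless standard probability spaces are orbit equivalent.

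For this reduced statement, I would build the desired measure-preserving isomorphism as an almost-sure limit of partial Borel isomorphisms $f_n$ constructed by a back-and-forth using Rokhlin's lemma. At step $n$, Rokhlin's lemma furnishes, in both $(X,\mu)$ and $(Y,\nu)$, a Borel tower of height $h_n$ with base whose $h_n$-th iterate returns close to itself, covering all but $\varepsilon_n$ of the measure, where $h_n \to \infty$ and $\sum \varepsilon_n < \infty$. Using the classification of atomless standard probability spaces (theorem~\ref{unicprob}) I can measure-preservingly identify the base in $X$ with the base in $Y$ and then propagate this up the towers to obtain $f_n$. I arrange the matching so that $f_n$ agrees with $f_{n-1}$ outside a Borel set of measure at most $\varepsilon_n$; Borel--Cantelli then gives a well-defined limit $f$ on a full-measure invariant subset. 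Because the tower heights go to infinity, the limit sends whole $R$-orbits (not just finite pieces) onto whole $S$-orbits.

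The hard part is the convergence and compatibility: one must simultaneously ensure (i) measure-preservation in the limit, (ii) the extension chain $f_n \approx f_{n-1}$, and (iii) that the limit truly intertwines the full orbit equivalence relations rather than only their finite approximations. The cleanest way to organise this is a Cantor--Schroeder--Bernstein-style back-and-forth where at odd (respectively even) steps one matches a new piece of $X$ (respectively $Y$) into the existing map, at each step only perturbing the earlier matching on a summably small set. Implementing this entirely in the Borel category, rather than just measurably, is the technical heart; ergodicity enters to guarantee that the Rokhlin towers on each side are abundant and balanced enough for the matching to succeed at every stage.
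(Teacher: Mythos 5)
The paper offers no proof of this statement at all: it is quoted as Dye's theorem and attributed to \cite{d}, so there is nothing in the text to compare your argument against. Your two-stage strategy is the correct classical route (hyperfinite ergodic relation $\Rightarrow$ generated by a single transformation $\Rightarrow$ Dye's orbit-equivalence theorem for ergodic $\Z$-actions), but as written it has real gaps. In the reduction step, ``compatible cyclic orderings'' do not directly yield a limiting automorphism: when the $R_n$-classes are spliced into an $R_{n+1}$-class, the successor map $T_n$ must change at the cut point of each $R_n$-cycle, so $\lim_n T_n(x)$ need not exist pointwise. One has to work with linear orders obtained by concatenation and pass to a subsequence so that $\sum_n \mu(\{x : T_{n+1}(x)\neq T_n(x)\})<\infty$; this is available because ergodicity on an atomless space forces almost every class to be infinite, but it needs to be said. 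Note also that this step is itself a theorem (Connes--Feldman--Weiss, the paper's own reference \cite{cfw}, or Slaman--Steel in the Borel category), not a routine observation.

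The second stage is the actual content of Dye's theorem, and there your text is a description of what a proof should accomplish rather than a proof: the choice of Rokhlin tower heights, the matching of bases, the control of $\sum\varepsilon_n$, and above all the verification that the limit map carries whole orbits to whole orbits are exactly the delicate points of Dye's argument, and you explicitly set them aside as ``the technical heart.'' So the proposal is an accurate roadmap of the standard proof, but not a self-contained one; for the purposes of this paper the honest resolution is the one the author takes, namely to cite \cite{d} (or \cite{km}) for the full argument.
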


\subsection{Strong ergodicity}

The notion of strong ergodicity, presented in this subsection, is due to Schmidt \cite{st}.

\begin{defi}
Let $\G\agit (X,\mu)$ be a measure-preserving action. A sequence $(B_n)$ of Borel subsets of $X$ is said to be \emph{asymptotically $\G$-invariant (with respect to $\mu$)} if
$$
\forall \gamma\in\G,~ \mu((\gamma\cdot B_n)\triangle B_n) \underset{n\to\infty}{\longrightarrow}0.
$$
The action $\G\agit (X,\mu)$ is said to be \emph{strongly ergodic} if, for every asymptotically $\G$-invariant sequence of Borel sets $(B_n)$, 
$$\mu(B_n)(1-\mu(B_n))\underset{n\to\infty}{\longrightarrow}0.$$
\end{defi}

Making use of $[R]$, one can extend this notion to equivalence relations.

\begin{defi}
Let $R$ be an equivalence relation on a standard probability space $(X,\mu)$.
A sequence $(B_n)$ of Borel subsets of $X$ is said to be \emph{asymptotically $R$-invariant (with respect to $\mu$)} if
$$
\forall \phi\in[R],~ \mu(\phi(B_n)\triangle B_n)\underset{n\to\infty}{\longrightarrow}0.
$$
The equivalence relation $R$ is said to be \emph{strongly ergodic} if, for every asymptotically $R$-invariant sequence of Borel sets $(B_n)$,
$$\mu(B_n)(1-\mu(B_n))\underset{n\to\infty}{\longrightarrow}0.$$
\end{defi}

\begin{rem}
One can check that if $\G\acts (X,\mu)$ is a measure-preserving action, then $(B_n)$ is asymptotically $\G$-invariant if and only if it is asymptotically $R_{\G\acts X}$-invariant. In particular, $\G\acts (X,\mu)$ is strongly ergodic if and only if $R_{\G\acts X}$ is strongly ergodic.
\end{rem}

\begin{rem}
It is clear that strong ergodicity implies ergodicity: if $B$ is invariant, set $B_n := B$ for all $n$ and apply strong ergodicity. What may be less clear is that the converse does not hold. In fact, the unique ergodic amenable relation is not strongly ergodic. To prove this, consider an ergodic measure-preserving action of $\Gamma_\infty := \bigoplus_{n\in \N} \Z/2\Z$ on a standard probability space $(X,\mu)$, for example the Bernoulli shift. For $N\in \N$, set as previously $\G_N := \bigoplus_{n\leq N} \Z/2\Z$. Since $\G_N$ is finite, the restricted action $\G_N\acts (X,\mu)$ admits a fundamental domain $D_N$, that is a Borel subset that intersects each orbit in exactly one point\footnote{To get such a fundamental domain, one can think of  $X$ as  $[0,1]$ and keep a point iff it is the smallest in its orbit for the usual ordering of the interval.}. One can find a Borel subset of $D_N$ of measure $\frac{\mu(D_N)}2$. Then, define $B_N$ as the $R_{\G_N\acts X}$-saturation of $D_N$. Each $B_N$ has measure $\frac 1 2$ and  is $\Gamma_M$-invariant for $M\leq N$, which completes the demonstration.
\end{rem}




\vspace{0.2 cm}

The following theorem will be crucial in Section~\ref{third} because it allows, under certain conditions, to deduce strong ergodicity from ergodicity. In its statement, $\mathcal S$ stands for the relation introduced in the Bernoulli example of Subsection~\ref{ergodicity} and $(X,\mu)$ for its underlying standard probability space.

\begin{thm}[Chifan-Ioana, \cite{ci}]
\label{corocf}
Let $B$ be a non-$\mu$-negligible Borel subset of $X$. Any ergodic equivalence subrelation of $\left(\mathcal{S}_{|B},\frac{\mu}{\mu(B)}\right)$ that is not $\frac{\mu}{\mu(B)}$-amenable is strongly ergodic.
\end{thm}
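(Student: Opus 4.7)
The strategy is to invoke S. Popa's deformation/rigidity framework tailored to Bernoulli actions, which is the natural setting for results of this kind. The first step is the standard dictionary between equivalence relations and $\mathrm{II}_1$ factors: to any ergodic, measure-preserving, countable relation $\mathcal{R}$ on $(Y,\nu)$ one attaches the Feldman--Moore factor $M(\mathcal{R}) := L^\infty(Y) \rtimes \mathcal{R}$. Strong ergodicity of $\mathcal{R}$ translates, via a theorem of Jones--Schmidt, into the statement that every central sequence in $M(\mathcal{R})$ lying in $L^\infty(Y)$ is asymptotically scalar. Non-amenability of $\mathcal{R}$ corresponds to non-amenability of the inclusion $L^\infty(Y) \subset M(\mathcal{R})$, and the fact that we work inside $\mathcal{S}_{|B}$ rather than $\mathcal{S}$ itself is handled by the standard amplification/compression trick for $\mathrm{II}_1$ factors, so one may pretend the ambient relation is genuinely Bernoulli.

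The analytical input is Popa's \emph{malleable deformation} of the Bernoulli action. One enlarges the base space to a larger probability space whose associated factor $\tilde{M}$ contains $M(\mathcal{R})$, and on $\tilde{M}$ one constructs a continuous one-parameter family $(\alpha_t)_{t \in \R}$ of trace-preserving automorphisms with $\alpha_0 = \mathrm{id}$ and $\alpha_t \to \mathrm{id}$ in $\|\cdot\|_2$, commuting with the natural $\mathcal{S}$-action. The deformation satisfies a transversality inequality bounding $\|\alpha_{2t}(x)-x\|_2$ from below in terms of $\|x - E_{M(\mathcal{R})}(\alpha_t(x))\|_2$.

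The decisive step, and the main obstacle, is a \emph{spectral gap} property for the $\mathcal{R}$-bimodule $L^2(\tilde{M}) \ominus L^2(M(\mathcal{R}))$: non-amenability of $\mathcal{R}$ rules out an almost-invariant unit vector in this bimodule, since such a vector would produce an almost-invariant mean on $\mathcal{R}$ and contradict non-amenability. Once spectral gap is established, an almost-central sequence $(\xi_n) \subset L^\infty(Y)$ must satisfy $\|\alpha_t(\xi_n)-\xi_n\|_2 \to 0$ uniformly in $n$ for small $t$; the transversality estimate then forces $\xi_n$ asymptotically into the fixed-point subalgebra of $\alpha_t$, which for the Bernoulli shift reduces to the scalars. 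The sequence is therefore asymptotically scalar, which is precisely strong ergodicity of $\mathcal{R}$. The quantitative balance between the non-amenability hypothesis and the behaviour of Popa's deformation is the technical core of Chifan and Ioana's argument, and it is this spectral-gap step where most of the work lies.
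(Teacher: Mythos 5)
A preliminary but important point: the paper contains no proof of this statement. Theorem~\ref{corocf} is imported as a black box from Chifan--Ioana \cite{ci}, and the comment immediately following it says explicitly that \cite{ci} proves more and that only this weakened version will be quoted. So the only benchmark for your proposal is the original argument of \cite{ci}, and measured against that your text is an accurate road map of the right strategy (Feldman--Moore algebra, the Jones--Schmidt translation of strong ergodicity into triviality of central sequences sitting in $L^\infty$, Popa's malleable deformation of the Bernoulli action, transversality, spectral gap), but it is a road map rather than a proof.

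The genuine gap is exactly the step you flag and then skip. You write that ``non-amenability of $\mathcal{R}$ rules out an almost-invariant unit vector in this bimodule, since such a vector would produce an almost-invariant mean on $\mathcal{R}$ and contradict non-amenability.'' That single sentence \emph{is} the theorem: it is the main technical content of Chifan and Ioana's paper, not a routine consequence of non-amenability. In the group case the spectral gap follows because $L^2(\tilde M)\ominus L^2(M)$ is weakly contained in the coarse bimodule of $L\Gamma$; for an arbitrary ergodic subrelation $\mathcal{R}\subset\mathcal{S}_{|B}$, which need not arise from any subgroup, no such identification exists, and the passage from an almost-invariant vector to an invariant mean in fact only produces amenability of $\mathcal{R}$ restricted to some positive-measure piece --- one must then use ergodicity to upgrade this to global amenability and reach the contradiction. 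None of this is carried out. Two further inaccuracies, minor by comparison: the transversality inequality is stated backwards (it bounds $\|\alpha_{2t}(x)-x\|_2$ from \emph{above} by a multiple of $\|\alpha_t(x)-E_{M}(\alpha_t(x))\|_2$, which is what lets closeness of $\alpha_t(\xi_n)$ to $M$ propagate to closeness of $\alpha_{2t}(\xi_n)$ to $\xi_n$); and the conclusion is not that $\xi_n$ falls into the fixed-point algebra of a single $\alpha_t$ (which for small $t$ is much larger than the scalars), but that one iterates transversality up to $t=1$, where $\alpha_1$ carries $L^\infty(X)$ onto an independent copy, forcing the central sequence to be asymptotically scalar. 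As it stands, the proposal correctly identifies the architecture of the Chifan--Ioana proof but leaves its core unproved.
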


\begin{comm}
In fact, \cite{ci} proves a lot more. But since we do not need the full result of Chifan and Ioana --- whose statement is more technical ---, we will stick to the stated version.
\end{comm}

\subsection{Graphings}
\label{sub:graphings}

A \emph{graphing} of a relation $R$ on $X$ is a countable family $(\varphi_i)$ of partial Borel automorphisms of $X$ that generates $R$ as an equivalence relation: this means that the smallest equivalence relation on $X$ that contains the graphs of the $\varphi_i$'s is $R$. In particular, the Borel partial automorphisms that appear in a graphing belong to $[[R]]$.
The notion of graphing generalizes to relations the notion of generating system.

Notice that the data of a graphing endows each $R$-class with a structure of connected graph: put an edge from $x$ to $x'$ if there is an $i$ such that $x$ belongs to the domain of $\varphi_i$ and $x'=\varphi_i(x)$. One can do this with multiplicity.

\begin{exem}
Let $\G$ be a finitely generated group and $S$ a finite generating system of $\Gamma$. Let $\G\agit X$ be a Borel action on a standard Borel space. For $s\in S$, let $\varphi_s$ denote the Borel automorphism implementing the action of $s^{-1}$. Then, $(\varphi_s)_{s\in S}$ is a graphing of $R_{\G\agit X}$.  Let us take a closer look at the graph structure.

Let $\mathcal G=(V,E)=(\G,E)$ denote the Cayley graph of $\G$ relative to $S$ (see Subsection~\ref{gendef} for the definition). In this example, we will use the concrete definition of Cayley graphs and take the vertex-set to be $\G$. If the action is free, then, for every $x$, the mapping $\gamma\mapsto \gamma^{-1}\cdot x$ is a graph isomorphism between $\mathcal G$ and the graphed orbit of $x$. The only point to check is that the graph structure is preserved: for all $(\gamma,\eta,x)\in \G\times\G\times X$,
\begin{eqnarray*}
(\gamma,\eta ) \in E& \Longleftrightarrow & \exists s\in S, \eta=\gamma s\\
 & \Longleftrightarrow & \exists s\in S, \eta^{-1}=s^{-1}\gamma^{-1}\\
 & \Longleftrightarrow & \exists s\in S, \eta^{-1}\cdot x=s^{-1}\gamma^{-1}\cdot x\\
 & \Longleftrightarrow & (\eta^{-1}\cdot x,\gamma^{-1}\cdot x)\text{ is an edge}.\\
\end{eqnarray*}
The point in putting all these inverses is that, this way, we only work with Cayley graphs on which the group acts from the left
. If the action is not assumed to be free, the map $\gamma\mapsto \gamma^{-1}\cdot x$ is only a graph-covering.
\end{exem}

To describe how a graph behaves at infinity, a useful notion is the one of end.

\begin{defi}
Let $\mathcal{G}=(V,E)$ be a countable graph. An \emph{end} of $\mathcal{G}$ is a map $\xi$ that associates to each finite subset $K$ of $V$ an infinite connected component of its complement, and that satisfies the following compatibility condition:
$$
\forall K,K',~K\subset K' \implies \xi(K')\subset \xi(K).
$$
\end{defi}

\begin{rem}
Every end is realized by some infinite injective path: for every $\xi$, there exists an infinite injective path $c:\N\to V$ such that, for every finite subset $K$ of $V$, the path $c$ eventually lies in $\xi(K)$. This results from a diagonal extraction argument.
\end{rem}

We now have all the vocabulary needed to state the following theorem, the graph-theoretic flavor of which will allow us to travel between the world of orbit equivalence and the one of percolation.

\begin{thm}
\label{gbouts}
Let $R$ be a countable Borel equivalence relation on $X$ that preserves the atomless probability measure $\mu$.
\begin{itemize}
\item If it admits a graphing such that, for $\mu$-almost every $x$, the class of $x$ has two ends (seen as a graph), then $R$ is hyperfinite $\mu$-almost everywhere.

\item If it admits a graphing such that, for $\mu$-almost every $x$, the class of $x$ has infinitely many ends, then $R$ is not ``hyperfinite $\mu$-almost everywhere''.
\end{itemize}
\end{thm}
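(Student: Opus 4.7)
Both parts go through the Connes--Feldman--Weiss theorem, which identifies ``hyperfinite $\mu$-almost everywhere'' with $\mu$-amenability of $R$. Thus (i) reduces to exhibiting a Reiter sequence for $R$, and (ii) to ruling one out.

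For (i), I would take the graphing $(\varphi_i)$ (passing WLOG to a locally finite refinement and to a full-measure $R$-invariant subset of $X$) and use balls in the orbit graphs as Folner sets. Let $B_n(x)$ be the radius-$n$ ball around $x$ in $[x]_R$ and set $f_n(x,y):=|B_n(x)|^{-1}\mathbf{1}_{y\in B_n(x)}$. Borelness and $\sum_y f_n(x,y)=1$ are immediate. Verifying
$$\sum_{z\in[x]_R}\bigl|f_n(x,z)-f_n(y,z)\bigr|\longrightarrow 0$$
for $\mu$-a.e.\ $(x,y)\in R$ reduces, by the triangle inequality along a graph-path from $x$ to $y$, to the case of graph-neighbors, and there becomes the Folner estimate $|B_n(x)\triangle B_n(y)|/|B_n(x)|\to 0$. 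The main obstacle is that two-endedness alone does \emph{not} force subexponential growth: one can decorate $\Z$ with finite trees of growing depth to build a locally finite $2$-ended graph of exponential growth. What rescues the argument is the measure-invariance of $R$, which supplies a mass-transport (``unimodularity'') principle on orbit graphs; under this principle, $2$-ended orbit graphs must be quasi-isometric to $\Z$, and then the balls $B_n$ genuinely give a Folner sequence, yielding a Reiter sequence for $R$.

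For (ii), I would argue by contradiction. Assume $R$ is hyperfinite $\mu$-almost everywhere and hence $\mu$-amenable. From a Reiter sequence $(f_n)$ one extracts, for $\mu$-a.e.\ $x$, finite subsets $F_n(x)\subset[x]_R$ whose normalized boundaries $|\partial F_n(x)|/|F_n(x)|$ tend to $0$ in $L^1(\mu)$. One then invokes mass-transport in the opposite direction: having infinitely many ends means that every finite $F\subset[x]_R$ borders infinitely many distinct infinite components of $F^c$, and averaging the count of boundary-edges against vertices (using $R$-invariance to transport mass) produces a uniform positive lower bound on $|\partial F|/|F|$, contradicting the Folner property. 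A slicker alternative is via Gaboriau's cost: almost-everywhere infinitely-ended orbits force $\mathrm{cost}(R)>1$, whereas hyperfinite relations have cost $1$.

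The crux in both directions, I expect, is correctly formulating and applying the unimodularity/mass-transport principle on orbit graphs: this is what lets one upgrade the purely geometric notion of ends into a measure-theoretic amenability statement accessible to Connes--Feldman--Weiss; without the measure-invariance the conclusions actually fail, as the exponentially-growing $2$-ended decorated line above shows.
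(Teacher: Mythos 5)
First, a point of comparison: the paper does not prove this theorem at all --- it is quoted verbatim as corollaire IV.24 of Gaboriau's cost paper \cite{gcost}, with pointers to \cite{a} and \cite{ghys}. So your sketch must stand on its own, and both halves break down exactly where you invoke ``mass transport''. For the first bullet, you rightly note that two-endedness does not force subexponential growth, but your rescue --- that measure-invariance forces the orbit graphs to be quasi-isometric to $\Z$ --- is unjustified and, I believe, false. The mass-transport identity $\int\sum_{y\in[x]_R}F(x,y)\,d\mu(x)=\int\sum_{y\in[x]_R}F(y,x)\,d\mu(x)$ only yields \emph{integrability} of quantities such as ``size of the decoration hanging off a spine vertex'', never uniform bounds; using a Kakutani--Rokhlin construction one can graph the (hyperfinite!) orbit relation of a single ergodic transformation so that almost every class is a line carrying finite decorations of unbounded depth. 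For such a graphing the balls are not F\o lner: for adjacent $x,y$ the set $B_n(x)\triangle B_n(y)$ can swallow an entire deep layer of a decoration, of size comparable to $|B_n(x)|$. The actual proof of this half is structural, not F\o lner-theoretic: a locally finite two-ended graph carries a canonical (hence Borel-selectable) linearly ordered family of finite cuts separating its two ends, so that each class becomes a bi-infinite chain of canonical finite pieces; up to a finite subrelation, $R$ is then generated by the single Borel automorphism ``shift by one piece'' and is therefore hyperfinite directly, with no need for Connes--Feldman--Weiss.

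The gap in the second bullet is more serious: the isoperimetric lower bound you want to extract from ``infinitely many ends'' is simply false as a pointwise geometric statement. (Also, by local finiteness a finite set $F$ borders only \emph{finitely} many components of its complement.) The tree obtained from $\Z$ by attaching an infinite ray at every integer has infinitely many ends and admits F\o lner sets: taking $\{-N,\dots,N\}$ together with the first $L$ vertices of each attached ray gives boundary ratio of order $1/L$. Consequently no argument that only looks at the geometry of a single class can prove non-amenability; the obstruction is genuinely measure-theoretic (amenability of $R$ is a measurable, uniform condition, not the existence of F\o lner sets in each class), and making it work is precisely the content of Adams's theorem \cite{a}, where an invariant mean would force a canonical selection of ends or cuts that three or more ends forbid. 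Your ``slicker alternative'' via cost is in fact the intended route and the one matching the citation: Gaboriau shows that a graphing with infinitely-ended classes forces the cost of every positive-measure restriction of $R$ to exceed $1$, whereas hyperfinite relations have cost $1$. As written, then, the proposal proves neither half; modulo the quoted results of \cite{a} and \cite{gcost} the cost argument for the second bullet is acceptable, but the F\o lner/mass-transport arguments do not close.
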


This theorem is corollaire IV.24 in \cite{gcost}. It is a statement among several of the kind (see \cite{a,ghys}).

\newpage

\section{Percolation}

Percolation is a topic coming originally from statistical mechanics (see \cite{grim}). After a foundational paper by Benjamini and Schramm \cite{bs}, strong connections with group theory have developed. This section presents the objects and theorems that will be needed in Section~\ref{third}. For more information about this material, one can refer to \cite{gdir}, \cite{l} and \cite{lp}.

\subsection{General definitions}
\label{gendef}

\vspace{0.5 cm}

\textsc{From here on, $\G$ will be assumed to be finitely generated.}

\vspace{0.5 cm}

 Let $S$ be a finite generating set of $\G$. Define a graph by taking $\G$ as vertex-set and putting, for each $\gamma\in\G$ and $s\in S$, an edge from $\gamma$ to $\gamma s$.  This defines a locally finite connected graph $\mathcal G = (V,E)$ that is called the \emph{Cayley graph} of $\G$ relative to $S$. The action of $\G$ on itself by multiplication from the left induces a (left) action on $\mathcal G$ by graph automorphisms. It is free and transitive as an action on the vertex-set. In fact, a locally finite connected graph $\mathcal G$ is a Cayley graph of $\G$ if and only if $\G$ admits an action on $\mathcal G$ that is free and transitive on the vertex-set.

We have defined $\mathcal G$ explicitly to prove that $\G$ admits Cayley graphs, but further reasonings shall be clearer if one forgets that $V=\G$ and just remembers that $\mathcal G$ is endowed with a free vertex-transitive action of $\G$. Thus, in order to get an element of $\G$ from a vertex, one will need a reference point.
Let $\rho$ be a vertex of $\mathcal G$ that we shall use as such a reference or anchor point.  Any vertex $v\in V$ can be written uniquely in the form $\gamma\cdot\rho$. 

The action $\G\agit E$ induces a shift action $\G\acts \Omega:=\{0,1\}^E$.
A \emph{(bond) percolation} will be a probability measure on $\Omega$. It is said  to be \emph{$\G$-invariant} if it is as a probability measure on $\Omega$.

\vspace{0.5 cm}

\textsc{In what follows, all considered percolations will be assumed to be $\G$-invariant.
Besides, for simplicity, we will work under the implicit assumption that $\Pp$ is atomless, so that $(\Omega,\Pp)$ will always be a standard probability space. }

\vspace{0.5 cm}

A point $\omega$ of $\Omega$ is seen as a subgraph of $\mathcal G$ the following way: $V$ is its set of vertices and $\omega^{-1}(\{1\})$ its set of edges. In words, keep all edges whose label is 1 and throw away the others --- edges labeled 1 are said to be \emph{open}, the other ones are said to be \emph{closed}. The connected components of this graph are called the \emph{clusters} of $\omega$. 
If $v\in V$, its $\omega$-cluster will be denoted by $\mathcal{C}(\omega,v)$. For $v\in V$, the map $\omega\mapsto \mathcal{C}(\omega,v)$ is Borel, the set of finite paths in $\mathcal G$ being countable. If $(u,v)\in V^2$, we will use $u\underset{\omega}{\fleche}v$ as an abbreviation for  ``$u$ and $v$ are in the same $\omega$-cluster''. The number of infinite clusters of $\omega$ will be denoted by $N_\infty(\omega)$.  The function $N_\infty$ is Borel.

\subsection{Independent percolation}
\label{percoindep}

The simplest interesting example of percolation is the product measure $\text{Ber}(p)^{\otimes E}$, for $p\in(0,1)$. It will be denoted by $\Pp_p$. Such percolations are called \emph{independant} or \emph{Bernoulli} percolations.
One is interested in the emergence of infinite clusters when $p$ increases. To study this phenomenon, introduce the \emph{percolation function} of $\mathcal{G}$, defined as $$\theta_\mathcal{G}:p\mapsto \Pp_p[|\mathcal{C}(\omega,\rho)|=\infty].$$

Endow $[0,1]^E$ with the probability measure $\Pp_{[0,1]} :=\text{Leb}([0,1])^{\otimes E}$. Notice that $\Pp_p$ is the push-forward of $\Pp_{[0,1]}$ by the following map
$$
\begin{array}{lccl}
\pi_p : & [0,1]^E & \longrightarrow & \{0,1\}^E \\
    & x & \longmapsto & (1_{x(e) < p})_{e\in E}.\end{array}
$$
Realizing probability measures as distributions of random variables suitably defined on a same probability space is called a \emph{coupling}. A fundamental property of this coupling is that, when $x\in[0,1]^E$ is fixed, $p\mapsto \pi_p(x)$ is non-decreasing for the product order. One deduces the following proposition.

\begin{prop}
The function $\theta_\mathcal{G}$ is non-decreasing.
\end{prop}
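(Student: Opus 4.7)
The plan is to use the coupling $\pi_p$ directly, observing that the event ``the cluster of $\rho$ is infinite'' is a monotone event, so monotonicity of the coupling in $p$ immediately transfers to monotonicity of its probability.

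More precisely, I would fix $p\leq q$ in $(0,1)$ and compare the two measures by pulling them back to $([0,1]^E,\Pp_{[0,1]})$ through the coupling. The core observation, which is essentially given in the statement just above the proposition, is that for every $x\in[0,1]^E$ and every edge $e\in E$, $x(e)<p$ implies $x(e)<q$; hence $\pi_p(x)\leq \pi_q(x)$ in the product order on $\{0,1\}^E$. Viewing these configurations as subgraphs of $\mathcal G$, this means the edge-set $\pi_p(x)^{-1}(\{1\})$ is contained in $\pi_q(x)^{-1}(\{1\})$, so for every vertex $v$ we have the inclusion of clusters $\mathcal C(\pi_p(x),v)\subset \mathcal C(\pi_q(x),v)$.

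Applied at $v=\rho$, this inclusion gives that whenever $|\mathcal C(\pi_p(x),\rho)|=\infty$, also $|\mathcal C(\pi_q(x),\rho)|=\infty$. Therefore
\[
\{x\in[0,1]^E : |\mathcal C(\pi_p(x),\rho)|=\infty\} \subset \{x\in[0,1]^E : |\mathcal C(\pi_q(x),\rho)|=\infty\}.
\]
Taking $\Pp_{[0,1]}$-measures and using that $\Pp_p$ and $\Pp_q$ are the pushforwards of $\Pp_{[0,1]}$ by $\pi_p$ and $\pi_q$ respectively, I conclude that $\theta_{\mathcal G}(p)\leq \theta_{\mathcal G}(q)$.

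There is no real obstacle here: the only thing to be a little careful about is that the event $\{|\mathcal C(\cdot,\rho)|=\infty\}$ is Borel (which has already been noted in the preceding subsection, since $\omega\mapsto \mathcal C(\omega,\rho)$ is Borel) and that $\pi_p$ is measurable, so that the set displayed above is indeed $\Pp_{[0,1]}$-measurable. After that, the argument is just inclusion of sets plus monotonicity of the measure.
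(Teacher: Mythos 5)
Your argument is correct and is exactly the one the paper intends: the proposition is stated immediately after the observation that $p\mapsto\pi_p(x)$ is non-decreasing for the product order, and your write-up simply fills in the details of the resulting inclusion of events under the coupling $\Pp_{[0,1]}$. No difference in approach.
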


\begin{coro}
There exists a unique real number $p_c(\mathcal{G})\in[0,1]$ such that the following two conditions hold:
\begin{itemize}
\item $\forall p < p_c(\mathcal{G}),~ \theta_\mathcal{G}(p)=0$,

\item $\forall p > p_c(\mathcal{G}),~ \theta_\mathcal{G}(p)>0$.
\end{itemize}
One calls $p_c(\mathcal{G})$  the \emph{critical probability} of $\mathcal{G}$.
\end{coro}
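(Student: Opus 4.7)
The plan is to define the candidate threshold directly from the monotone function $\theta_\mathcal{G}$ and then check the two required properties. Concretely, I would set
$$p_c(\mathcal{G}) := \sup\{p\in[0,1] : \theta_\mathcal{G}(p)=0\},$$
with the convention that $\sup\emptyset=0$. Since $\theta_\mathcal{G}$ takes values in $[0,1]$ and is non-decreasing by the preceding proposition, the set $A:=\{p\in[0,1]:\theta_\mathcal{G}(p)=0\}$ is an initial segment of $[0,1]$: if $q\in A$ and $p<q$, then $0\leq \theta_\mathcal{G}(p)\leq \theta_\mathcal{G}(q)=0$, so $p\in A$.

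From this, the two announced properties fall out. For $p<p_c(\mathcal{G})$, the definition of the supremum produces some $q\in A$ with $p<q\leq p_c(\mathcal{G})$, and monotonicity gives $\theta_\mathcal{G}(p)\leq \theta_\mathcal{G}(q)=0$. Conversely, for $p>p_c(\mathcal{G})$, one has $p\notin A$ (otherwise $p\leq\sup A=p_c(\mathcal{G})$), which means $\theta_\mathcal{G}(p)\neq 0$, hence $\theta_\mathcal{G}(p)>0$ since $\theta_\mathcal{G}$ is non-negative. Uniqueness is then immediate: if two distinct values $p_c<p_c'$ both satisfied the conditions, picking any $p\in(p_c,p_c')$ would simultaneously force $\theta_\mathcal{G}(p)>0$ and $\theta_\mathcal{G}(p)=0$, a contradiction.

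There is honestly no genuine obstacle here: the statement is the standard extraction of a threshold from a monotone $[0,1]$-valued function on $[0,1]$, and the only care needed is the convention $\sup\emptyset=0$ to ensure $p_c(\mathcal{G})$ is always well-defined in $[0,1]$ (covering the degenerate cases where $\theta_\mathcal{G}$ is identically zero or strictly positive on the whole of $(0,1]$). Note that the value of $\theta_\mathcal{G}$ \emph{at} $p_c(\mathcal{G})$ itself is not specified by the statement, and indeed determining it is the deep question of continuity of the phase transition, well outside the scope of this elementary corollary.
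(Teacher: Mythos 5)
Your proof is correct and follows the only natural route: the paper itself gives no explicit argument, presenting the corollary as an immediate consequence of the monotonicity of $\theta_\mathcal{G}$, which is exactly what your supremum construction formalizes. (Your worry about $\sup\emptyset$ is moot since $\theta_\mathcal{G}(0)=0$ always, but it does no harm.)
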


\begin{rem}
When $p_c(\mathcal{G})$ is not trivial (neither  0 nor 1), this result establishes the existence of a \emph{phase transition}. One cannot have $p_c(\mathcal{G}) = 0$, but $p_c(\mathcal{G})=1$ may occur (e.g.\ it does for $\mathbb{Z}$).
\end{rem}

The following theorems describe almost totally the phase transitions related to the number of infinite clusters.

\begin{prop}
For all $p\in(0,1)$, the random variable $N_\infty$ takes a $\Pp_p$-almost deterministic value, which is 0, 1 or $\infty$. This value is 0 if $p< p_c(\mathcal{G})$ and 1 or $\infty$ if $p>p_c(\mathcal{G})$.
\end{prop}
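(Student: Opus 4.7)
The plan is to combine ergodicity of the Bernoulli shift with an ``insertion tolerance'' argument \emph{à la} Newman--Schulman. I will address in turn: the almost-sure constancy of $N_\infty$, the subcritical case, the supercritical case, and finally the trichotomy $N_\infty\in\{0,1,\infty\}$.

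First, observe that the function $N_\infty:\Omega\to \N\cup\{\infty\}$ is Borel and $\G$-invariant, since shifting the configuration by $\gamma\in\G$ merely permutes the clusters. Because $\Pp_p=\mathrm{Ber}(p)^{\otimes E}$ is the Bernoulli measure on $\{0,1\}^E$, the shift action $\G\acts(\Omega,\Pp_p)$ is ergodic (this is exactly the Bernoulli example recalled in subsection~\ref{ergodicity} with $A=E$ and $\nu=\mathrm{Ber}(p)$). Hence $N_\infty$ is $\Pp_p$-almost surely constant, equal to some $k\in\{0,1,2,\dots,\infty\}$.

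For the subcritical case $p<p_c(\mathcal G)$, one has $\theta_\mathcal G(p)=\Pp_p[|\mathcal C(\omega,\rho)|=\infty]=0$ by definition of $p_c$. Since $\G$ acts transitively on the vertex-set and preserves $\Pp_p$, the same holds with $\rho$ replaced by any $v\in V$: $\Pp_p[|\mathcal C(\omega,v)|=\infty]=0$. Taking a countable union over $V$ yields $\Pp_p[N_\infty\geq 1]=0$, so $k=0$. In the supercritical case $p>p_c(\mathcal G)$, one has $\theta_\mathcal G(p)>0$, so $\Pp_p[N_\infty\geq 1]>0$, forcing $k\geq 1$.

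It therefore remains to show the trichotomy: $k$ cannot be a finite integer $\geq 2$. This is the main obstacle, and I would handle it via the standard insertion-tolerance argument. Suppose for contradiction that $\Pp_p[N_\infty=k]=1$ for some integer $k$ with $2\leq k<\infty$. Let $B_n$ denote the ball of radius $n$ centred at $\rho$ in $\mathcal G$, and let $A_n$ be the event ``at least two of the $k$ infinite clusters meet $B_n$''. Since the $k$ infinite clusters are infinite and $V=\bigcup_n B_n$, the increasing union $\bigcup_n A_n$ has full $\Pp_p$-measure, so $\Pp_p[A_n]>0$ for $n$ large enough. Now let $T_n$ be the event ``every edge of $\mathcal G$ with both endpoints in $B_n$ is open''. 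The crucial point is that $T_n$ depends only on the (finitely many) edges inside $B_n$, whereas one can reformulate $A_n$ so that it depends only on edges with at least one endpoint outside $B_n$ (by looking at the infinite clusters of the configuration with the edges of $B_n$ removed and asking how many of them touch $B_n$). By independence of coordinates in $\Pp_p$, we then get
$$\Pp_p[T_n\cap A_n]=\Pp_p[T_n]\cdot\Pp_p[A_n]=p^{|E(B_n)|}\Pp_p[A_n]>0.$$
But on $T_n\cap A_n$, opening every edge in $B_n$ merges at least two infinite clusters into one, so $N_\infty\leq k-1$, contradicting $\Pp_p[N_\infty=k]=1$. This rules out finite $k\geq 2$ and finishes the proof. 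The only delicate point is the bookkeeping that makes $A_n$ measurable with respect to the edges outside $B_n$; once phrased in terms of the ``external'' configuration, independence of the product measure does the rest.
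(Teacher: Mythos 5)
The paper states this proposition without proof (it is quoted as standard background, essentially the Newman--Schulman/Burton--Keane circle of ideas), so your argument has to stand on its own. The first three quarters of it do: the $\G$-invariance of $N_\infty$ plus ergodicity of the Bernoulli shift gives almost-sure constancy, the union bound over vertices handles $p<p_c$, and $\theta_{\mathcal G}(p)>0$ forces $k\geq 1$ for $p>p_c$. All of that is correct.

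The trichotomy step, however, has a genuine gap exactly at the point you wave off as bookkeeping. Note first that your original $A_n$ (``at least two distinct infinite clusters of $\omega$ meet $B_n$'') is actually \emph{disjoint} from $T_n$: if every edge inside $B_n$ is open, all clusters meeting $B_n$ coincide. So the reformulation is unavoidable, and it genuinely changes the event: the reformulated $A_n$ speaks of the infinite clusters of the \emph{external} configuration $\omega':=\omega\setminus E(B_n)$. On $T_n\cap A_n$ you then only obtain $N_\infty(\omega)\leq N_\infty(\omega')-1$, and nothing you have established bounds $N_\infty(\omega')$ by $k$ --- deleting the edges of $B_n$ can split infinite clusters and \emph{increase} their number, so a priori $N_\infty(\omega')\geq k+1$ is perfectly consistent with $N_\infty(\omega)=k$, and no contradiction follows. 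The missing ingredient is a second use of independence: the event $\{N_\infty(\omega\setminus E(B_n))=k\}$ depends only on the edges outside $B_n$, and it coincides with $\{N_\infty=k\}$ on the positive-probability event that all edges of $B_n$ are closed; hence it has probability $1$. With that in hand, on $T_n\cap A_n$ the external configuration has exactly $k$ infinite clusters, at least two of which merge, and $N_\infty(\omega)\leq k-1$ follows as you claim. (Alternatively, skip the reformulation entirely and apply the insertion-tolerance inequality $\Pp_p[\Pi^e(B)]\geq p\,\Pp_p[B]$ iteratively over $E(B_n)$ to the image $\Pi^{E(B_n)}(A_n)$ of the original event.) Either repair is short, but it is an additional idea, not bookkeeping.
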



\begin{thm}[H\"aggstr\"om-Peres, \cite{hp}]
There exists a unique real number $p_u(\mathcal{G})\in[p_c(\mathcal{G}),1]$ such that the following two conditions hold:
\begin{itemize}
\item $\forall p < p_u(\mathcal{G}), \Pp_{p}[N_\infty=1]=0$,

\item $\forall p > p_u(\mathcal{G}), \Pp_{p}[N_\infty=1]=1$.
\end{itemize}
One calls $p_u(\mathcal{G})$  the \emph{uniqueness probability} of $\mathcal{G}$.
\end{thm}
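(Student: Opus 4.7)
Set $U := \{p \in [0,1] : \Pp_p[N_\infty = 1] = 1\}$. The theorem is equivalent to the statement that $U$ is an up-set in $[0,1]$: once this is established, define $p_u(\mathcal{G}) := \inf U$ (with the convention $\inf \emptyset := 1$), and the two dichotomy conditions together with uniqueness of the threshold follow at once. The inclusion $p_u \geq p_c$ is automatic, since the previous proposition forces $N_\infty = 0$ below $p_c$. The entire content of the theorem is thus the following \emph{monotonicity lemma}: if $p_1 < p_2$ and $\Pp_{p_1}[N_\infty = 1] = 1$, then $\Pp_{p_2}[N_\infty = 1] = 1$.

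I would attack this lemma along the standard monotone coupling $\omega_p := \pi_p(x)$ on $([0,1]^E, \Pp_{[0,1]})$, for which $\omega_{p_1} \subseteq \omega_{p_2}$, so each $\omega_{p_1}$-cluster is contained in a single $\omega_{p_2}$-cluster. The first step is a preliminary trichotomy: by the ergodicity of the $\G$-shift on $(\Omega,\Pp_p)$ (the Bernoulli example) the $\G$-invariant function $N_\infty$ is $\Pp_p$-almost surely constant, and the classical Burton--Keane trifurcation argument (using vertex-transitivity of $\mathcal{G}$ and the insertion tolerance of Bernoulli percolation) forces this constant to lie in $\{0,1,\infty\}$. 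It therefore suffices to rule out $N_\infty(\omega_{p_2}) \equiv \infty$.

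Assume uniqueness at $p_1$ and let $C_\infty$ denote the $\Pp_{p_1}$-a.s.\ unique infinite $\omega_{p_1}$-cluster. It is contained in some infinite $\omega_{p_2}$-cluster $\widetilde{C}_\infty$. Any further infinite $\omega_{p_2}$-cluster $C'$ would be disjoint from $\widetilde{C}_\infty$ and hence assembled from infinitely many finite $\omega_{p_1}$-clusters, glued by edges of $\omega_{p_2}\setminus\omega_{p_1}$. Conditionally on $\omega_{p_1}$, these extra edges form a Bernoulli percolation with parameter $(p_2-p_1)/(1-p_1)$ on the $\omega_{p_1}$-closed edges, so the existence of $C'$ is a non-uniqueness event for this conditional percolation. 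I would then apply a Burton--Keane / mass-transport argument to the pair $(\omega_{p_1},\,\omega_{p_2}\setminus\omega_{p_1})$: insertion tolerance survives conditioning, the mass transport principle still holds via the underlying $\G$-action, and one extracts from a hypothetical $C'$ an infinite density of trifurcation-type points, contradicting the usual Burton--Keane bound.

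The main obstacle is precisely this last step: the conditional environment depends on the random $\omega_{p_1}$, so the joint configuration is no longer a Bernoulli product, and one must track $\G$-invariance carefully in order to legitimately invoke mass transport. This is the technical heart of H\"aggstr\"om and Peres's original paper; everything else in the plan is bookkeeping around this core argument.
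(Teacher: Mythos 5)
The paper does not prove this theorem at all: it is quoted as a black box from H\"aggstr\"om--Peres \cite{hp}, so there is no internal proof to compare against. Your reduction is fine as bookkeeping: by the trichotomy proposition $N_\infty$ is $\Pp_p$-a.s.\ constant with value in $\{0,1,\infty\}$, so the set $U=\{p:\Pp_p[N_\infty=1]=1\}$ together with its complement exhausts $[0,1]$, and the theorem is indeed equivalent to the monotonicity lemma ``uniqueness at $p_1$ implies uniqueness at $p_2>p_1$,'' with $p_u:=\inf U$ and $p_u\geq p_c$ automatic. The standard monotone coupling and the observation that, conditionally on $\omega_{p_1}$, the sprinkled edges form an independent $\frac{p_2-p_1}{1-p_1}$-percolation on the $\omega_{p_1}$-closed edges are also correct.

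The genuine gap is the mechanism you propose for the core step. You want to rule out a second infinite $\omega_{p_2}$-cluster $C'$ by extracting ``an infinite density of trifurcation-type points, contradicting the usual Burton--Keane bound.'' But the Burton--Keane bound is exactly the argument that is unavailable here: its contradiction comes from comparing the number of trifurcation points in a box to the size of the box's boundary, and this only closes on amenable graphs (F\o lner sets). On an amenable Cayley graph the theorem is vacuous anyway, since proposition~\ref{unicite} gives $N_\infty\leq 1$ a.s.\ and hence $p_u=p_c$; the entire content of H\"aggstr\"om--Peres lies in the non-amenable case, where Bernoulli percolation genuinely can have $N_\infty=\infty$ and no trifurcation-density argument can produce a contradiction. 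The correct engine is different: either the mass-transport argument of \cite{hp} (on unimodular graphs, every infinite $p_2$-cluster must contain an infinite $p_1$-cluster --- the ``simultaneous birth'' statement, which is strictly stronger than what trifurcation counting could give), or, closer to the spirit of this paper, the Lyons--Schramm route explicitly flagged in subsection~\ref{indisclus}: in the generalized percolation $\G\acts[0,1]^E$ with $\pi=\pi_{p_2}$, the statement ``the $\pi_{p_2}(x)$-cluster of $v$ contains an infinite $\pi_{p_1}(x)$-cluster'' is a cluster property, so by theorem~\ref{ls} all infinite $p_2$-clusters agree on it; since $\theta(p_1)>0$ forces at least one of them to satisfy it, all do, and a second infinite $p_2$-cluster would then contain a second infinite $p_1$-cluster, contradicting uniqueness at $p_1$. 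As written, your plan names the hard step but supplies for it a tool that fails precisely where the theorem has content.
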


\begin{rem}
If $\G$ is amenable, Proposition~\ref{unicite} gives $p_c(\mathcal{G})=p_u(\mathcal{G})$. The converse is conjectured to hold. A weak form of the converse has been established by Pak and Smirnova-Nagnibeda \cite{psn} and used in \cite{gl}.
\end{rem}

\begin{prop}[\cite{blps2}]
If $\G$ is non-amenable, then $p_c(\mathcal{G})<1$ and there is no infinite cluster $\Pp_{p_c(\mathcal{G})}$-almost surely.
\end{prop}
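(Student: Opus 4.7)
The statement splits naturally into two parts of rather different depth, and I would attack them separately.

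The plan for $p_c(\mathcal{G})<1$ is a standard Peierls argument exploiting the isoperimetric strength of non-amenability. Non-amenability of $\G$ provides a Cheeger constant $h>0$ such that every finite vertex set $F\subset V$ satisfies $|\partial_E F|\geq h|F|$. The event that $\rho$ lies in a \emph{finite} cluster equals the disjoint union, over finite connected vertex-subgraphs $C\ni\rho$, of the event that every edge inside $C$ is open and every edge of $\partial_E C$ is closed, whose probability is at most $(1-p)^{h|C|}$. Since $\mathcal{G}$ has bounded degree $d$, the number of connected subgraphs of size $n$ containing $\rho$ is at most $K^n$ for some constant $K=K(d)$ (a classical lattice-animal bound). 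Hence
$$
\Pp_p[1\leq|\mathcal{C}(\omega,\rho)|<\infty]\;\leq\;\sum_{n\geq 1}K^n(1-p)^{hn},
$$
which is $<1$ as soon as $p$ is close enough to $1$ that $K(1-p)^h$ is small. This forces $\theta_{\mathcal{G}}(p)>0$ for such $p$, giving $p_c(\mathcal{G})<1$.

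The plan for the absence of an infinite cluster at $p_c(\mathcal{G})$ is significantly more delicate: this is the actual content of \cite{blps2}. I would proceed by contradiction, assuming $\theta_{\mathcal{G}}(p_c)>0$, and combine three ingredients. First, the Mass Transport Principle, available because $\G$ acts freely and transitively on $\mathcal{G}$ (so the automorphism group is unimodular): for every non-negative $\G$-diagonally-invariant function $F:V\times V\to\R_+$, $\sum_v F(\rho,v)=\sum_v F(v,\rho)$. Second, a Burton--Keane style argument adapted to the non-amenable setting to rule out a \emph{unique} infinite cluster at $p_c$, or more precisely to compute the expected number of ``trifurcation-like'' vertices in each infinite cluster. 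Third, a perturbation via the standard coupling $\pi_p$ of Section \ref{percoindep}: if infinite clusters exist at $p_c$, one should be able to open slightly more edges to merge and thicken them. The contradiction comes from using the Cheeger inequality to force the infinite clusters (or the ``touching sites'' between them and the slightly-supercritical enlargement) to have so much boundary that the mass transport identity cannot be balanced.

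The first step is routine and I would write it out in detail. The real obstacle is the second step: assembling the mass-transport/Burton--Keane argument requires the full machinery of \cite{blps2} (including their careful study of uniqueness and of the number of infinite clusters in coupled percolations), and I would likely just cite their theorem rather than reprove it, as the author does. If forced to reconstruct it, the hardest point would be to turn the isoperimetric inequality $|\partial_E F|\geq h|F|$ into a quantitative statement about the geometry of infinite $p_c$-clusters that is sharp enough to clash with the Mass Transport Principle.
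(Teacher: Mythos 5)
The paper gives no proof of this proposition: it is stated purely as a citation of \cite{blps2}, so there is no internal argument to compare against, and your decision to prove the easy half and cite the reference for the hard half matches what the author does (and goes a bit further). Your Peierls argument for $p_c(\mathcal G)<1$ is correct and complete: the only cosmetic slip is that the event ``the cluster of $\rho$ equals $C$'' requires $C$ to be $\omega$-connected rather than that \emph{every} edge inside $C$ be open, but since your upper bound $(1-p)^{|\partial_E C|}\le (1-p)^{h|C|}$ only uses the closed boundary edges, the estimate $\Pp_p[1\le|\mathcal C(\omega,\rho)|<\infty]\le\sum_n K^n(1-p)^{hn}<1$ for $p$ near $1$ is valid, and non-amenability of $\G$ does give a positive edge-isoperimetric constant for any Cayley graph by the F\o lner criterion. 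For the absence of an infinite cluster at $p_c$, deferring to \cite{blps2} is exactly what the paper does; I would only caution that your sketch of their strategy is somewhat off target --- the actual proof pivots on the Benjamini--Lyons--Peres--Schramm theorem that an invariant percolation on a nonamenable unimodular transitive graph whose expected degree at the root is sufficiently close to the degree of the graph must have infinite clusters, applied after splitting into the cases $N_\infty=1$ and $N_\infty=\infty$ at $p_c$ (using that in the latter case each infinite cluster has infinitely many ends), rather than a direct Burton--Keane trifurcation count clashing with mass transport. Since you cite rather than reprove that result, this inaccuracy is not a gap in your write-up.
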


\begin{conj}
If $p_c(\mathcal{G})<1$, then there is no infinite cluster $\Pp_{p_c(\mathcal{G})}$-almost surely.
\end{conj}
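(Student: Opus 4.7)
This statement is a celebrated open conjecture (absence of percolation at criticality on every Cayley graph with non-trivial phase transition), not a theorem the author is about to prove; a ``proof proposal'' can therefore only be a proposal of attack together with an account of what is actually known. The non-amenable case is handled by the preceding proposition (\cite{blps2}), where non-amenability supplies enough geometric expansion to force $\theta_{\mathcal G}(p_c)=0$. The planar case $\mathcal G=\Z^2$ is Harris-Kesten, proved via self-duality. The high-dimensional case $\mathcal G=\Z^d$ with $d$ large (originally $d\geq 19$ by Hara-Slade, now $d\geq 11$ by Fitzner-van der Hofstad) follows from the lace expansion and the triangle condition. The genuinely open cases are the amenable Cayley graphs of intermediate ``dimension'', including $\Z^d$ for $3\leq d\leq 10$.

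The attack I would attempt is by contradiction. Assume $\theta_{\mathcal G}(p_c)>0$. First, combine $\G$-invariance, ergodicity of the Bernoulli shift on $(\Omega,\Pp_{p_c})$, and a Burton-Keane style argument (which uses amenability, the non-amenable case being already done) to conclude that $N_\infty=1$ almost surely at $p_c$. Second, extract from finite-energy/insertion-tolerance a priori lower bounds on the one-arm and two-point probabilities at criticality. Third, feed these into differential inequalities for the mean cluster size $\chi(p):=\mathbb{E}_p[|\mathcal C(\omega,\rho)|]$, in the spirit of Aizenman-Barsky-Menshikov and the Duminil-Copin-Tassion reformulation of sharpness. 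The heuristic is that, via the coupling $\pi_p$ of subsection~\ref{percoindep}, any positive density at $p_c$ ``leaks'' mass into $\chi(p)$ for $p$ slightly below $p_c$, and one hopes to upgrade this leak into divergence of $\chi$ on a non-trivial interval below $p_c$, contradicting the very definition of $p_c$.

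The obstacle that keeps this conjecture open is precisely the second step: on a generic amenable Cayley graph there is no known a priori lower bound on the one-arm probability at $p_c$ strong enough to drive the differential inequality. Planar duality supplies such a bound on $\Z^2$ and smallness of the triangle diagram supplies it in high dimensions, but between these two regimes we lack any soft input --- group-theoretic or geometric --- that could replace them. Genuine progress would presumably require either a ``sharpness at the critical point'' theorem analogous to Duminil-Copin-Tassion but operating at $p_c$ itself, or a new rigidity statement for invariant percolations on Cayley graphs that directly rules out a critical infinite cluster; the orbit-equivalence perspective of the present note, which identifies the cluster equivalence relation on $\{N_\infty=\infty\}$ with a rich dynamical object, may in fact be a natural source of such a rigidity principle.
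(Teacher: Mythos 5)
The statement you were given is labelled as a \emph{conjecture} in the paper and is not proved there --- it is the well-known open problem of absence of percolation at criticality on Cayley graphs with $p_c(\mathcal{G})<1$ --- so there is no proof in the paper to compare yours against. You correctly identify it as open, your survey of the settled cases (non-amenable groups via the cited result of Benjamini--Lyons--Peres--Schramm, $\Z^2$ via Harris--Kesten, high-dimensional $\Z^d$ via the lace expansion) is accurate, and you honestly locate the fatal gap in your proposed attack at the missing a priori one-arm lower bound; this is the correct assessment of the situation rather than a proof, which is all that can be asked here.
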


\vspace{0.25 cm}
The phase transition theorems are roughly summarized in the picture below. Remember that the quantities $p_c$, $p_u$ and 1 may coincide.

\begin{center}
\setlength{\unitlength}{1mm}
\thicklines
\begin{picture}(100,10)
\put(0,0){
   \line(1,0){100}
}

\put(33,5){$p_c$}
\put(33,-1.5){
   \line(0,1){3}
}

\put(67,5){$p_u$}
\put(67,-1.5){
   \line(0,1){3}
}

\put(0,5){$0$}
\put(0,-1.5){
   \line(0,1){3}
}

\put(100,5){1}
\put(100,-1.5){
   \line(0,1){3}
}

\put(10,3){$N_\infty = 0$}

\put(43,3){$N_\infty = \infty$}

\put(77,3){$N_\infty = 1$}
\end{picture}
\end{center}

\vspace{0.1 cm}

\subsection{Generalized percolation}

The notion of generalized percolation that is presented in this subsection is due to Gaboriau \cite{gdir}.

\vspace{0.3 cm}

Let $\G \agit (X,\Pp)$ be a Borel action on a standard probability space. Assume that it is provided together with a $\G$-equivariant map $$\pi : X \to \Omega = \{0,1\}^E,$$
the space $\{0,1\}^E$ being endowed with the shift action. This will be called a \emph{generalized ($\G$-invariant) percolation}. As for percolations, we will omit the ``$\G$-invariant'' part of the denomination.

To begin with, let us see how this notion is connected to the one presented in Subsection~\ref{gendef}. If a generalized percolation is given, then $\pi_\star \Pp$ --- the pushforward of $\Pp$ by $\pi$ --- is a $\G$-invariant percolation that may have atoms. Conversely, if one is given a $\G$-invariant atomless  percolation, one can consider the Bernoulli shift action $\G \agit X = \Omega$ together with $\pi : X \to \Omega$ the identity. Via this procedure, one can redefine in the percolation setting any notion introduced in the generalized framework.

Notice that the $\pi_p$'s of the standard coupling, introduced at the beginning of Subsection~\ref{percoindep}, provide interesting examples of such generalized percolations.

This setting provides the same atomless measures on $\Omega$ as the previous one, but it allows more flexibility in our way to speak of them. In the next subsection, we will discuss properties of clusters. The usual setting allows to speak of properties such as ``being infinite'', ``having three ends'', ``being transient for simple random walk''. The generalized one will allow us, if we consider $\G\agit [0,1]^E$ together with $\pi_{p_1}$, to speak of ``the considered $p_1$-cluster contains an infinite $p_0$-cluster''.

\subsection{Cluster indistinguishability}
\label{indisclus}

In this subsection, we work with a given generalized percolation. The action is denoted by $\G\agit (X,\Pp)$ and the equivariant map by $\pi$. 

\begin{nota}
We call \emph{vertex property} --- or \emph{property} ---  a Borel \textsc{$\G$-invariant} Boolean function on $X\times V$, i.e. a Borel function
$$
P:X\times V \to \{\text{true},\text{false}\}
$$
that is invariant under the diagonal action of $\G$. If $W\subset V$,   we write $P^+(x,W)$ for ``all the vertices in $W$ satisfy $P(x,.)$''. More formally, we define
 $$P^+(x,W) := ``\forall v\in W, P(x,v)\text{''}.$$ We also set
\begin{itemize}
\item $P^-(x,W) := ``\forall v\in W, \lnot P(x,v)$'',

\item $P^\pm(x,W) := ``P^+(x,W)\vee P^-(x,W)$''.
\end{itemize}
The expression $P^\pm(x,W)$ means ``all the vertices in $W$ agree on $P(x,.)$''.
\end{nota}

\begin{exem}The degree of a vertex in a graph is its number of neighbors.
 ``The vertex $v$ has degree 4 in $\pi(x)$ seen as a subgraph of $\mathcal G$'' is a property.
\end{exem}

\begin{defi}
We call \emph{cluster property} a property $P$ such that $P(x,v) \Longleftrightarrow P(x,u)$ as soon as $u\overset{\pi(x)}{\fleche}v$. In words, it is a vertex property such that, for any $x$, the function $P(x,.)$ is constant on $\pi(x)$-clusters.
\end{defi}

\begin{exem}
The previous example is usually not a cluster property: for most Cayley graphs $\mathcal{G}$, there exist subgraphs of $\mathcal G$ where some component has some vertices of degree 4 and others of other degree.  ``The $\pi(x)$-cluster of $v$ is infinite'', ``the $\pi(x)$-cluster of $v$ is transient'', ``the $\pi(x)$-cluster of $v$ has a vertex of degree 4'' are cluster properties.
\end{exem}

\begin{cex}
``The $\pi(x)$-cluster of $v$ contains $\rho$'' is \emph{not} a cluster property, because of the lack of $\G$-invariance. It is to avoid such ``properties'' that $\G$-invariance is required in the definition of vertex properties: allowing them would automatically make any indistinguishability theorem false since they can distinguish the cluster of the origin from the others.
\end{cex}

\begin{exem}
Here is another example of cluster property, which can be (directly) considered only in the generalized setting. Consider $X=[0,1]^E$ and $0<p_0<p_1<1$. We take $\pi=\pi_{p_1}$ (see Subsection~\ref{percoindep}). The property ``the $\pi_{p_1}(x)$-cluster of $v$ contains an infinite $\pi_{p_0}(x)$-cluster'' is a cluster property. It has been considered by Lyons and Schramm in \cite{ls} to derive the H\"aggstr\"om-Peres Theorem from indistinguishability.
\end{exem}

To formalize the indistinguishability of infinite clusters, one needs to speak of cluster properties and infinite clusters. Thus, we set  $$\proptoop(x):=  \{v\in V :|\mathcal{C}(\pi(x),v)|=\infty\}.$$

\begin{defi}
The considered generalized percolation will be said to satisfy \emph{(infinite cluster) indistinguishability} (or one will say that its infinite clusters are indistinguishable) if, for every cluster property $P$,
$$
\Pp[P^\pm(x,\proptoop(x))]=1.
$$
\end{defi}
Of course, this notion is empty as soon as $\Pp[N_\infty(\pi(x))\leq 1]=1$, e.g.\ for $\Pp_p$ when $\G$ is amenable.

\begin{rem}
Assume momentarily that $\G\agit (X,\Pp)$ is ergodic 
and that the infinite clusters are indistinguishable. Then for every cluster property $P$, by indistinguishability, 
$$
\Pp[P^+(x,\proptoop(x)) \text{ or } P^-(x,\proptoop(x))]=1.
$$
Besides, by ergodicity,
$\Pp[P^+(x,\proptoop(x))]$ and $ \Pp[P^-(x,\proptoop(x))]$ are 0 or 1. Altogether, these identities guarantee that
$$
\Pp[P^+(x,\proptoop(x))]=1\text{~~~or~~~}\Pp[P^-(x,\proptoop(x))]=1.
$$
\end{rem}

To state the Indistinguishability Theorem in its natural form, we need to introduce the notion of insertion-tolerance.

\subsection{Insertion-tolerance}

In this subsection, we work with non-generalized percolations.

\begin{defi}
If $(\omega,e)\in \Omega\times E$, one denotes by $\omega^e$ the unique element of $\Omega$ equal to $\omega$ on $E\backslash \{e\}$ and taking the value 1 at $e$. One sets $\Pi^e : \omega \mapsto \omega^e$.
A percolation is said to be \emph{insertion-tolerant} if for every Borel subset $B\subset \Omega$, for every edge $e$, $$\Pp[B]>0\implies\Pp[\Pi^e(B)]>0.$$
\end{defi}

\begin{exem}
For any $p\in(0,1)$, the percolation $\Pp_p$ is insertion-tolerant.
\end{exem}

\begin{prop}
\label{unicite}
If $\G$ is amenable and if $\Pp$ is an insertion-tolerant percolation on $\mathcal G$, then $\Pp[N_\infty(\omega)\leq 1]=1$.
\end{prop}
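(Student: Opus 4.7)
The plan is to run the classical Burton--Keane argument, which combines insertion-tolerance (used to create trifurcations with positive probability at every vertex) with amenability (used to rule out the resulting abundance of trifurcations).

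First, a Newman--Schulman argument gives $N_\infty \in \{0, 1, \infty\}$ almost surely. Indeed, if $\Pp[N_\infty = k] > 0$ for some finite $k \geq 2$, then for $R$ large enough the event ``$N_\infty = k$ and two distinct infinite clusters meet the ball $B(\rho, R)$'' has positive probability; a finite cascade of insertion-tolerance applications along a path inside $B(\rho, R)$ connecting two such clusters then produces a positive-probability event on which $N_\infty \leq k - 1$, which (upon passage to an ergodic component to make $N_\infty$ almost surely constant) contradicts the assumed value of $N_\infty$.

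Assume now that $N_\infty = \infty$ almost surely, and prove that the anchor $\rho$ is a \emph{trifurcation} with positive probability, where a trifurcation of $\omega$ is a vertex $v$ lying in an infinite cluster whose removal breaks it into at least three infinite components. Since $N_\infty = \infty$ and $\mathcal G$ is locally finite, one chooses $R$ so that three distinct infinite clusters meet $B(\rho, R)$ with positive probability; on this event one fixes one vertex per cluster together with a finite tree in $\mathcal G$ rooted at $\rho$ whose leaves are those three vertices, and performs a finite cascade of insertions along its edges. This produces a positive-probability event on which $\rho$ is a trifurcation; by $\G$-transitivity, every vertex becomes a trifurcation with the same strictly positive probability $p > 0$.

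The conclusion is the combinatorial Burton--Keane estimate against a F{\o}lner sequence $(F_n)$ for $\G$. Let $T_n(\omega)$ denote the number of trifurcations of $\omega$ contained in $F_n$. For each infinite cluster $C$, the trifurcations of $C$ lying in $F_n$ can be matched injectively to distinct vertices of the inner boundary $\partial_V F_n$: each trifurcation offers three disjoint infinite branches of $C$ which must exit $F_n$, and a careful selection (reducing to the classical fact that a finite tree whose internal vertices have degree $\geq 3$ has strictly more leaves than internal vertices) makes the chosen exit points distinct across trifurcations. Summing over clusters gives $T_n(\omega) \leq |\partial_V F_n|$ pointwise, so $p\,|F_n| = \mathbb{E}[T_n] \leq |\partial_V F_n|$, contradicting the F{\o}lner property $|\partial_V F_n|/|F_n| \to 0$. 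The main obstacle is precisely this combinatorial estimate: the injective assignment of trifurcations to boundary vertices requires organizing the trifurcations of each cluster into a tree-like structure and carefully extracting disjoint branches.
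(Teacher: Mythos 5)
The paper does not actually prove this proposition: it is quoted from the literature, with the remark that follows crediting \cite{bk,gkn} and pointing to \cite{lp} for a proof of the general statement. So there is no internal argument to compare against, and what you supply is the classical Burton--Keane strategy, which is indeed the proof behind the cited result. The trifurcation half of your argument is sound and, importantly, needs no ergodicity at all: it is a first-moment bound, so if $\Pp[N_\infty\geq 3]>0$ then three distinct infinite clusters meet a fixed ball with positive probability, $\rho$ is a trifurcation with positive probability $p$, and $p\,|F_n|\leq |\partial_V F_n|$ gives the contradiction. (The construction of the trifurcation deserves one more sentence than ``a tree rooted at $\rho$'': you must arrange three branches pairwise disjoint off $\rho$ and check that the inserted edges do not reconnect the three clusters except through $\rho$; this is standard.)

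The genuine soft spot is the Newman--Schulman step, specifically the parenthetical ``upon passage to an ergodic component''. The paper's standing hypothesis is only $\G$-invariance of $\Pp$, not ergodicity, and insertion tolerance does not obviously pass to ergodic components. Indeed, insertion tolerance at $e$ is equivalent to $\Pp[\omega(e)=1\mid \mathcal F_{E\setminus\{e\}}]>0$ almost surely, while the ergodic components arise from further conditioning on the invariant $\sigma$-field, which is \emph{not} contained in $\mathcal F_{E\setminus\{e\}}$ (invariant events such as $\{N_\infty=k\}$ genuinely depend on every coordinate); the extra conditioning can a priori destroy the positivity, so this inheritance requires proof. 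The issue matters because, after the trifurcation argument has excluded $\Pp[N_\infty\geq 3]>0$, the entire remaining burden is to exclude $\Pp[N_\infty=2]>0$, and inserting edges can only \emph{decrease} $N_\infty$; hence producing a positive-probability event with $N_\infty=1$ does not contradict $\Pp[N_\infty=2]>0$ unless $N_\infty$ is known to be almost surely constant. You should either justify that ergodic components (or the conditional measures $\Pp[\,\cdot\mid N_\infty=k]$) inherit insertion tolerance, or substitute the non-ergodic argument of \cite{gkn} and \cite{lp}. For ergodic $\Pp$ --- in particular for the Bernoulli measures $\Pp_p$, which is where the paper actually invokes the proposition --- your proof is complete.
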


\begin{rem}
Proposition~\ref{unicite} improves results obtained in \cite{bk, gkn}. For a proof of the general statement, see \cite{lp}.
\end{rem}

\begin{prop}[\cite{ls}, Proposition 3.10]
\label{lsbouts}
If $\Pp$ is an insertion-tolerant percolation on $\mathcal G$ that produces a.s. at least two infinite clusters, then it produces a.s. infinitely many infinite clusters and each of them has infinitely many ends.
\end{prop}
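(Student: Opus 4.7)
The plan is to handle the two conclusions separately, both by contradiction and both through Newman--Schulman-style modifications exploiting insertion-tolerance.

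For the first conclusion, that $\Pp[N_\infty = \infty] = 1$, assume for contradiction that $\Pp[N_\infty = k] > 0$ for some finite integer $k \geq 2$. Let $A_n$ denote the event ``$N_\infty = k$, and each of the $k$ infinite clusters intersects the ball $B(\rho, n)$ of radius $n$ around $\rho$''. The sets $A_n$ are increasing in $n$ and their union is $\{N_\infty = k\}$, so $\Pp[A_n] > 0$ for $n$ large enough. By iterated insertion-tolerance, insert every edge of $\mathcal G$ whose two endpoints lie in $B(\rho, n)$; each such insertion preserves positive probability. The induced subgraph of $\mathcal G$ on $B(\rho, n)$ is connected, so in the resulting configuration the $k$ infinite clusters are merged through $B(\rho, n)$ into one infinite cluster. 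Hence $\Pp[N_\infty = 1] > 0$, contradicting $\Pp[N_\infty \geq 2] = 1$.

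For the second conclusion, suppose for contradiction that the event ``$\rho$'s cluster is infinite and has at most two ends'' has positive probability. (By $\G$-invariance and a countable union over vertices, this is equivalent to some infinite cluster having at most two ends with positive probability.) From the first conclusion, $N_\infty = \infty$ almost surely, so arbitrarily many infinite clusters are available. The idea is to produce \emph{trifurcations}: vertices $v$ with $|\mathcal C(\omega, v)| = \infty$ such that $\mathcal C(\omega, v) \setminus \{v\}$ has at least three infinite components. For $n$ large enough, the following event has positive probability: there exist three distinct infinite clusters $C_1, C_2, C_3$ and three paths $\gamma_1, \gamma_2, \gamma_3$ of $\mathcal G$, pairwise vertex-disjoint except at $\rho$, going from $\rho$ to vertices $v_i \in C_i \cap B(\rho, n)$. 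Iteratively inserting all edges along $\gamma_1 \cup \gamma_2 \cup \gamma_3$ shows that $\rho$ is a trifurcation with positive probability. A graph with at most two ends contains no trifurcation, so ``$\rho$'s cluster has at most two ends'' and ``$\rho$ is a trifurcation'' are incompatible. The contradiction is then obtained by a mass transport argument on the unimodular Cayley graph $\mathcal G$, which translates the positive pointwise probability of a trifurcation at $\rho$ into the statement that no infinite cluster can have at most two ends.

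The main obstacle is in the second part. While building a single trifurcation by insertion-tolerance is direct, deducing from its positive probability that clusters with at most two ends almost surely do not exist requires a mass transport / counting argument that properly uses the unimodularity of the Cayley graph. Care is also needed in showing that the three vertex-disjoint paths exist with positive probability; this uses the finite generation of $\G$, the fact that $\rho$ has enough neighbours, and the almost sure presence of infinitely many infinite clusters reaching every neighbourhood of $\rho$.
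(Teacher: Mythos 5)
The paper itself does not prove this proposition --- it imports it verbatim from Lyons and Schramm \cite{ls} --- so your attempt has to be measured against the argument there. Your first half is fine: the Newman--Schulman merging argument (take $A_n$ on which all $k$ infinite clusters meet $B(\rho,n)$, insert all edges of the ball by iterated insertion tolerance, obtain $\Pp[N_\infty=1]>0$) is exactly the standard proof that $N_\infty\in\{0,1,\infty\}$ almost surely.

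The second half has a genuine gap, precisely at the step you yourself flag as ``the main obstacle'', and the tool you propose there cannot close it. You correctly obtain, by one application of insertion tolerance, that $\rho$ is a trifurcation with positive probability. But you then claim that a mass transport argument converts this into ``no infinite cluster has at most two ends''. It does not: a positive density of trifurcations is perfectly compatible with the coexistence of one- or two-ended infinite clusters, since the trifurcations may all sit in \emph{other} clusters. (For instance, an invariant percolation which is a fair mixture of the deterministic partition of $\G$ into left cosets of an infinite cyclic subgroup --- all clusters two-ended --- and of some trifurcation-rich configuration has positive trifurcation density and two-ended clusters; unimodularity and the mass transport principle alone cannot distinguish it from what you need. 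Only insertion tolerance does.) The two events ``$\mathcal{C}(\omega,\rho)$ has at most two ends'' and ``$\rho$ is a trifurcation'' both having positive probability is not a contradiction. The correct route is to apply insertion tolerance \emph{a second time}, to the event ``$\mathcal{C}(\omega,\rho)$ is infinite and has at most two ends'', and the real difficulty --- the content of Lyons--Schramm's Lemma 3.6 and of their proof of Proposition 3.10 --- is that after inserting edges one can no longer read off from the modified configuration what the original cluster was, so one needs a careful comparison between $\Pp$ and its pushforwards under the maps $\Pi^e$ to see that the relevant structure survives the surgery. A secondary omission: even if you ruled out clusters with at most two ends, you would only have ``every infinite cluster has at least three ends''; passing from at least three to infinitely many is where the mass transport principle genuinely enters (a cluster with $k$ ends, $3\le k<\infty$, carries a canonical nonempty finite core, and transporting unit mass from each of its vertices to that core violates the MTP). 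Your proposal invokes mass transport for the step it cannot accomplish and omits it for the step that requires it.
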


Now that insertion-tolerance has been introduced, we can state the Indistinguishability Theorem of Lyons and Schramm (\cite{ls}).

\begin{thm}[Lyons-Schramm,~\cite{ls}]
\label{ls}
Any insertion-tolerant percolation has indistinguishable infinite clusters.
\end{thm}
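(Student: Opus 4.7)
The plan is to argue by contradiction. Fix a cluster property $P$ and suppose $\Pp[P^\pm(x,\proptoop(x))]<1$. The strategy has two layers: first reduce to \emph{robust} cluster properties --- those for which $P(\omega,v)=P(\omega',v)$ whenever $\omega$ and $\omega'$ differ at only finitely many edges and $v$ lies in an infinite cluster of both configurations; then, for a robust $P$ that distinguishes infinite clusters, use insertion-tolerance to flip a single edge that merges a $P$-cluster with a $\lnot P$-cluster, contradicting cluster-invariance of $P$.

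In the robust case, the failure of $\Pp[P^\pm]=1$ means that, with positive probability, some infinite cluster satisfies $P$ everywhere while some other satisfies $\lnot P$ everywhere. Proposition~\ref{lsbouts} guarantees that whenever there are at least two infinite clusters there are infinitely many, each with infinitely many ends, and hence each is adjacent in $\mathcal G$ to vertices of many other infinite clusters. A mass-transport (or Fubini) argument then gives a positive-probability event on which a specific edge $e=\{u_1,u_2\}$ of $\mathcal G$ satisfies $e\notin\omega$, with $u_1$ in an infinite $P$-cluster and $u_2$ in an infinite $\lnot P$-cluster. Applying insertion-tolerance to this event produces a positive-probability set of configurations $\omega^e$ in which $u_1$ and $u_2$ share a common infinite cluster; robustness preserves $P(\omega^e,u_i)=P(\omega,u_i)$ for $i=1,2$, contradicting that $P$ is a cluster property on $\omega^e$.

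To reduce a general cluster property $P$ to a robust one, I would define $\tilde P(\omega,v)$ to be true precisely when the long-run frequency of $P$-vertices along the simple random walk $(W_n)_{n\geq 0}$ started at $v$ in the $\pi(\omega)$-cluster of $v$ exceeds $1/2$, namely $\liminf_n \frac{1}{n}\sum_{k=0}^{n-1}\mathbf{1}[P(\omega,W_k)]>1/2$. Stationarity of the environment-seen-from-the-walker chain on pointed infinite clusters, combined with the Markov property, makes $\tilde P$ insensitive to modifications of $\omega$ at finitely many edges, hence robust. The equality $\tilde P=P$ on $\proptoop$ would then follow from the cluster-invariance of $P$ together with a pointwise ergodic theorem for the walker chain, which gives $\frac{1}{n}\sum_{k<n} \mathbf{1}[P(\omega,W_k)]\to \mathbf{1}[P(\omega,v)]$ almost surely.

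The main obstacle is this last convergence: one needs a zero-one law for the asymptotic frequency of $P$-visits by simple random walk on random infinite insertion-tolerant clusters. This relies on reversibility of the environment-seen-from-the-walker chain and a disintegration/ergodic-decomposition argument to work cluster-by-cluster, and it is the technical heart of \cite{ls}. Once $\tilde P$ is constructed, the geometric-robust step outlined above applies and yields the theorem.
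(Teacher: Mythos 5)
First, a point of comparison: the paper does not prove this statement. Theorem~\ref{ls} is quoted from Lyons and Schramm \cite{ls} and used as a black box, so there is no internal proof to measure your attempt against; what follows assesses your sketch on its own terms.

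Your sketch correctly names the two genuine ingredients of the Lyons--Schramm argument (insertion tolerance used to merge clusters of different types, and the delayed simple random walk with its stationary environment-seen-from-the-walker process), but the pivot of your plan --- the reduction to \emph{robust} cluster properties --- collapses. Since $P$ is a cluster property, $P(\omega,W_k)=P(\omega,v)$ for every step of a walk confined to $\mathcal{C}(\omega,v)$; the long-run frequency you average is therefore identically $0$ or $1$, and $\tilde P=P$ on infinite clusters for \emph{every} configuration, not merely almost surely. Consequently ``$\tilde P$ is robust'' is verbatim the assertion that $P$ itself is insensitive to finite edge modifications, and that assertion is false for bona fide cluster properties: the paper's own example ``the cluster of $v$ has a vertex of degree $4$'' can be switched on by inserting a single edge. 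No construction of the proposed type can succeed, because the theorem is not that cluster properties are robust. Your ``main obstacle'' (an ergodic theorem for visit frequencies) is thus aimed at the wrong quantity; the frequencies are trivial, and the reduction is circular.

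What the actual proof does instead is keep $P$ as it is and establish two facts whose conjunction is contradictory: (a) a pivotality statement --- on the event that both types of infinite cluster coexist, insertion tolerance (applied along a finite path, and organized by a mass-transport argument using proposition~\ref{lsbouts}) produces, with probability bounded below, a vertex whose cluster type is changed by a finite edge modification; and (b) an ``escape'' lemma --- the law of the environment re-rooted at $W_n$ becomes asymptotically insensitive to finite modifications near the starting point, while the cluster type is carried unchanged along the entire walk. Your second paragraph asserts a version of (a) but only for a single inserted edge, which requires two infinite clusters of different types to be \emph{adjacent} in $\mathcal G$; that adjacency is not automatic and is precisely what the mass-transport step supplies. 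So beyond the broken reduction, the geometric half of the argument is also asserted rather than derived.
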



\subsection{Percolation and orbit equivalence}
\label{percorbit}

In this subsection, we work with a generalized percolation, where the action is denoted by $\G\agit (X,\Pp)$ and the equivariant map by $\pi$. 

The \emph{cluster equivalence relation} is defined as follows: two configurations $x$ and $x'$ in $X$ are said to be $R_{cl}$\emph{-equivalent} if there exists $\gamma\in\G$ such that $\gamma^{-1}\cdot x=x'$ and $\gamma\cdot \rho \overset{\pi(x)}{\fleche}\rho$.
In words, an $R_{cl}$-class is a configuration up to $\G$-translation and with a distinguished cluster --- the one of the root $\rho$.

Every generalized percolation is $R_{cl}$-invariant, since $R_{cl}$ is a subrelation of $R_{\G\agit X}$.

Let $S$ denote the generating set associated to the choice of the Cayley graph $\mathcal G$. For $s \in S$, let $\tilde \varphi_s$ denote the restriction of $ x \mapsto s^{-1}\cdot x$ to the $x$'s such that the edge $(\rho,s\cdot\rho)$ is $\pi(x)$-open. If the action of $\Gamma$ on $X$ is free, this graphing induces on $[x]_{R_{cl}}$ the graph structure of the $\pi(x)$-cluster of the anchor point $\rho$. This remark, together with Theorem~\ref{gbouts} and Proposition~\ref{lsbouts}, provides the following proposition.

\begin{prop}
\label{themm}
Let $\Pp$ denote an insertion-tolerant classic percolation. Assume that
\begin{itemize}
\item $N_\infty$ is infinite $\Pp$-almost surely,
\item for $\Pp$-almost every $\omega$, the map $\gamma\mapsto \gamma\cdot \omega$ is injective.
\end{itemize}
Then $R_{cl}$ is not $\Pp$-amenable.
\end{prop}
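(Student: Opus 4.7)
The plan is to chain together three earlier results: the graphing $(\tilde\varphi_s)_{s\in S}$ implements the $\pi(x)$-cluster of $\rho$ as a graph structure on the $R_{cl}$-class of $x$; by Lyons--Schramm (proposition~\ref{lsbouts}), every infinite cluster has infinitely many ends almost surely; and, by theorem~\ref{gbouts}, a countable Borel equivalence relation whose classes are almost surely graphed with infinitely many ends cannot be hyperfinite almost everywhere, hence (Connes--Feldman--Weiss) cannot be amenable. The only real bookkeeping point is that ``infinitely many ends'' is only meaningful on the event that $\rho$ lies in an infinite cluster, whereas we want to conclude non-amenability of $R_{cl}$ on all of $X$.

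First I would introduce $A := \{x\in X : |\mathcal{C}(\pi(x),\rho)| = \infty\}$ and verify two elementary facts. The set $A$ is Borel and $R_{cl}$-invariant (immediate from the definition of $R_{cl}$, since an $R_{cl}$-translate of $x$ corresponds to replacing $\rho$ by another vertex of the same $\pi(x)$-cluster, which is infinite iff the original one is). Moreover, $\Pp(A) > 0$: by $\G$-invariance of $\Pp$, the quantity $\Pp[v\in\text{infinite cluster}]$ does not depend on $v\in V$, so if $\Pp(A) = 0$ we would get $\Pp[N_\infty > 0] = 0$ by countable subadditivity over the (countable) vertex set, contradicting $N_\infty = \infty$ almost surely.

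Second, using the hypothesis that $\gamma \mapsto \gamma\cdot\omega$ is $\Pp$-almost surely injective (i.e.\ the action is almost surely free), the computation recalled in subsection~\ref{sub:graphings} shows that on a conull subset of $A$ the restricted graphing $(\tilde\varphi_s|_A)_{s\in S}$ identifies the $R_{cl}|_A$-class of $x$, as a graph, with the $\pi(x)$-cluster of $\rho$. Combined with proposition~\ref{lsbouts} applied to $\Pp$ (which is insertion-tolerant and has $N_\infty \geq 2$ almost surely since $N_\infty = \infty$), the graphed $R_{cl}|_A$-class of $\Pp|_A$-almost every point therefore has infinitely many ends.

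The second bullet of theorem~\ref{gbouts} then yields that $R_{cl}|_A$ is not hyperfinite $\frac{\Pp}{\Pp(A)}$-almost everywhere, hence by Connes--Feldman--Weiss not $\frac{\Pp}{\Pp(A)}$-amenable. Since $A$ is $R_{cl}$-invariant of positive measure, amenability of $R_{cl}$ would pass to $R_{cl}|_A$ (simply restrict the Borel functions $f_n$ of the definition of $\mu$-amenability to $(A\times A)\cap R_{cl}$, noting that the normalization $\sum_{y\in [x]_R} f_n(x,y) = 1$ is preserved because the $R_{cl}$-classes of points of $A$ are entirely contained in $A$). Contradiction, so $R_{cl}$ is not $\Pp$-amenable. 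The only delicate step is the careful restriction to the invariant subset $A$, needed because the end-theoretic criterion is empty on the locus where $\rho$ sits in a finite cluster; once that restriction is in place, the three invoked results assemble the conclusion immediately.
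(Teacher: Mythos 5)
Your proof is correct and follows the same route the paper sketches just before the statement: the graphing $(\tilde\varphi_s)_{s\in S}$ realizes the $\omega$-cluster of $\rho$ as the graphed $R_{cl}$-class, proposition~\ref{lsbouts} supplies the infinitely many ends, and the second bullet of theorem~\ref{gbouts} together with Connes--Feldman--Weiss rules out amenability. Your explicit restriction to the $R_{cl}$-invariant, positive-measure locus $A=\{x : |\mathcal{C}(\pi(x),\rho)|=\infty\}$ (and the observation that amenability would pass to that restriction) is exactly the bookkeeping the paper leaves implicit, and you handle it correctly.
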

\begin{rem}
This proposition applies to Bernoulli percolations that yield infinitely many infinite clusters.
\end{rem}

\newpage

\section{Ergodicity and indistinguishability}
\label{third}

Throughout this section, we will work with a generalized percolation. The underlying standard probability space will be denoted by $(X,\Pp)$ and the equivariant map by $\pi$.

\setcounter{subsection}{-1}

\subsection{The dictionary}
\label{dico}

The following array presents concisely the correspondence between percolation theory and orbit equivalence theory. In the following sections, no knowledge of this array will be assumed and we will start from scratch. Though, we think it may be useful to the reader to have all the data compactly presented in a single place, hence this subsection.

In the following ``dictionary'', the bijection $\psi:\Gamma\backslash (X\times V) \to X$ induced by $(x,\gamma\cdot \rho)\mapsto \gamma^{-1}\cdot x$ is the translator.

\vspace{0.7 cm}

\begin{center}
\begin{tabular}{|ccc|}
  \hline
  Orbit equivalence &  & Percolation\\
  \hline

$X$ &$\overset{\psi}{\fleche} $     &  $\Gamma\backslash(X\times V)$         \\

$\gamma^{-1}\cdot x$ & & $[(x,\gamma\cdot \rho)]$\\

$x\in X_\infty$&&$\rho\overset{\pi(x)}{\fleche}\infty$\\

Borel subset & & vertex property\\

$R_{cl}$-class & & cluster \\

$R_{cl}$-invariant & & cluster property \\

 ergodicity of $R$ &$\simeq$ & indistinguishability \\

$\phi$ s.t. $\text{gr}(\phi)\subset R_{cl}$ & & rerooting \\

$\phi \in [R]$ && vertex-bijective rerooting\\

asymptotically $R_{cl}$-invariant  && asymptotic cluster property \\

strong ergodicity of $R$ &$\simeq$& strong indistinguishability \\

graphing & & graph structure\\

  \hline
\end{tabular}
\end{center}

\vspace{0.2 cm}

\subsection{Classic connection}

The map $P\mapsto B_P:=\{x\in X : P(x,\rho)\}$ realizes a bijection from the set of properties onto the set of Borel subsets of $X$. Its inverse map is $B\mapsto \left(P_B:(x,\gamma\cdot \rho)\mapsto ``(\gamma^{-1}\cdot x,\rho)\in B\text{''}\right)$. It induces a bijection between the set of cluster properties and the set of $R_{cl}$-invariant Borel subsets of $X$.

\begin{lem}
\label{lemmaF}
Let $P$ denote a property and $\Lambda$ a subset of $\Gamma$. For any $x\in X$,
$$
P^\pm(x,\proptoop(x)\cap (\Lambda^{-1}\cdot\rho)) \Longleftrightarrow \forall y,z\in X_\infty \cap (\Lambda\cdot x),~ (y\in B_P\Longleftrightarrow z\in B_P).
$$
\end{lem}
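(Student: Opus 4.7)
The plan is to unwind both sides and reduce them to the same combinatorial statement, indexed by a common subset $\Lambda' \subset \Lambda$, with the $\Gamma$-invariance of $P$ identifying the two Boolean functions involved.

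First I would introduce $\Lambda' := \{\lambda \in \Lambda : \lambda \cdot x \in X_\infty\}$. Using the $\Gamma$-equivariance of $\pi$ --- translation by $\lambda$ sends the $\pi(x)$-cluster of $\lambda^{-1} \cdot \rho$ bijectively onto the $\pi(\lambda \cdot x)$-cluster of $\rho$ --- I get that $\lambda^{-1} \cdot \rho \in \proptoop(x)$ iff $\lambda \cdot x \in X_\infty$. Combined with the freeness of $\Gamma \acts V$ (which makes $\lambda \mapsto \lambda^{-1} \cdot \rho$ injective on $\Lambda$), this yields
$$\proptoop(x) \cap (\Lambda^{-1} \cdot \rho) = \Lambda'^{-1} \cdot \rho \quad \text{and} \quad X_\infty \cap (\Lambda \cdot x) = \Lambda' \cdot x.$$

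Next, I would use the $\Gamma$-invariance of $P$: for every $\lambda \in \Lambda$,
$$P(x, \lambda^{-1} \cdot \rho) = P(\lambda \cdot x, \lambda \cdot \lambda^{-1} \cdot \rho) = P(\lambda \cdot x, \rho) = 1_{B_P}(\lambda \cdot x).$$
Define $f : \Lambda' \to \{\text{true},\text{false}\}$ by $f(\lambda) := P(\lambda \cdot x, \rho)$. The left-hand side of the lemma says that $v \mapsto P(x,v)$ is constant on $\Lambda'^{-1} \cdot \rho$; via the bijection $\lambda \leftrightarrow \lambda^{-1} \cdot \rho$ (freeness of $\Gamma \acts V$), this is equivalent to $f$ being constant on $\Lambda'$. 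The right-hand side says $y \mapsto 1_{B_P}(y)$ is constant on $\Lambda' \cdot x$; since $f(\lambda) = 1_{B_P}(\lambda \cdot x)$, this too is equivalent to $f$ being constant on $\Lambda'$. The two sides therefore coincide.

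I expect no serious obstacle; the only delicate point is keeping straight which of the two actions --- on $V$ or on $X$ --- is free. The action on $V$ is free and transitive, so the parametrization of $\Lambda'^{-1} \cdot \rho$ by $\Lambda'$ loses no information. The action on $X$ need not be free, but this is harmless: the value $f(\lambda)$ depends only on $\lambda \cdot x$ through the definition of $B_P$, so if two indices $\lambda_1, \lambda_2 \in \Lambda'$ satisfy $\lambda_1 \cdot x = \lambda_2 \cdot x$ one automatically has $f(\lambda_1) = f(\lambda_2)$, and constancy on $\Lambda' \cdot x$ transfers faithfully to $\Lambda'$.
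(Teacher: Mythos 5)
Your proof is correct and follows essentially the same route as the paper's: both unwind $P^\pm$ into a pointwise statement over group elements, use the equivariance of $\pi$ to identify ``$\lambda^{-1}\cdot\rho\in\proptoop(x)$'' with ``$\lambda\cdot x\in X_\infty$'', and use the $\Gamma$-invariance of $P$ to rewrite $P(x,\lambda^{-1}\cdot\rho)$ as $\lambda\cdot x\in B_P$. Your packaging via $\Lambda'$ and the function $f$ is just a tidier organization of the paper's chain of equivalences (where $\Delta=\Lambda^{-1}$ plays the role of your index set), and your remark on the non-freeness of $\Gamma\acts X$ being harmless is accurate.
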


\vspace{0.2 cm}

\begin{proof}
It results from the fact that, for any cluster property $P$ and any $x\in X$, if one sets $\Delta:= \Lambda^{-1}$,
\begin{equation*}
\begin{array}{l}
P^\pm(x,\proptoop(x)\cap (\Delta\cdot\rho))\Longleftrightarrow\left\{\forall u,v\in\proptoop(x)\cap (\Delta\cdot\rho),P(x,u)\Longleftrightarrow P(x,v)\right\}\\
~~\Longleftrightarrow~\left(\forall \gamma_0,\gamma_1\in \Delta,~\left\{\begin{tiny}\begin{array}{c} \gamma_0\cdot\rho\overset{\pi(x)}{\longleftrightarrow}\infty \\ \text{ and }\\ \gamma_1\cdot\rho\overset{\pi(x)}{\longleftrightarrow}\infty\end{array}\end{tiny}\right\} \implies \left(P(x,\gamma_0\cdot\rho)\Longleftrightarrow (P(x,\gamma_1\cdot\rho)\right) \right)\\
~~\Leftrightarrow~\forall \gamma_0,\gamma_1\in \Delta,~\left\{\begin{tiny}\begin{array}{c}\rho\xleftrightarrow{\pi(\gamma_0^{-1}\cdot x)}\infty \\ \text{ and }\\ \rho\xleftrightarrow{\pi(\gamma_1^{-1}\cdot x)}\infty\end{array}\end{tiny}\right\} \implies \left(P(\gamma_0^{-1}\cdot x,\rho)\Longleftrightarrow (P(\gamma_1^{-1}\cdot x,\rho)\right) \\
~~\Longleftrightarrow \forall y,z\in X_\infty \cap (\Lambda\cdot x),~ (y\in B_P\Longleftrightarrow z\in B_P).\hfill\phantom{word}\\
\end{array}
\end{equation*}
\end{proof}

Taking $\Lambda=\Gamma$ gives the following proposition.

\begin{prop}
\label{one}
Consider a generalized percolation defined by $\G\agit (X,\Pp)$  and a $\G$-equivariant map $\pi : X\to\Omega$.
Then the considered generalized percolation has indistinguishable infinite clusters if and only if for every Borel subset $B$ of $X$, for $\Pp$-almost every $x\in X$, the following holds: $$\forall y\in X_\infty\cap (\Gamma\cdot x), ~x\in B \Longleftrightarrow y\in B.$$
\end{prop}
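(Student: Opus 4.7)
The plan is to apply Lemma~\ref{lemmaF} with $\Lambda=\Gamma$ and to translate via the bijection $B\leftrightarrow P_B$ recalled at the start of this subsection. Since the left action of $\Gamma$ on the vertex set $V$ of the Cayley graph is transitive, one has $\Gamma^{-1}\cdot\rho=V$, and therefore $\proptoop(x)\cap(\Gamma^{-1}\cdot\rho)=\proptoop(x)$. Lemma~\ref{lemmaF} specialized to $\Lambda=\Gamma$ then reads, for every vertex property $P$,
$$P^\pm(x,\proptoop(x))\Longleftrightarrow \forall y,z\in X_\infty\cap(\Gamma\cdot x),~y\in B_P\Leftrightarrow z\in B_P.$$

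For the implication $(\Leftarrow)$, I would take any cluster property $P$, set $B:=B_P$, and apply the hypothesis of the proposition to $B$: for $\Pp$-a.e.\ $x$ and every $y,z\in X_\infty\cap(\Gamma\cdot x)$ the chain $y\in B\Leftrightarrow x\in B\Leftrightarrow z\in B$ shows that $y$ and $z$ agree on $B$. By the displayed equivalence, $P^\pm(x,\proptoop(x))$ holds $\Pp$-almost surely, which is precisely indistinguishability.

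For the implication $(\Rightarrow)$, given a Borel subset $B\subset X$ I set $P:=P_B$. Indistinguishability combined with the displayed equivalence implies that, for $\Pp$-a.e.\ $x$, all elements of $X_\infty\cap(\Gamma\cdot x)$ share the same $B$-status; the condition stated in the proposition then follows whenever $x$ itself is consistent with this common value. The main obstacle here is that $P_B$ need not itself be a cluster property, so indistinguishability cannot be applied to $B$ directly. I plan to work around this by cluster-saturating $B$ (modifying it on a $\Pp$-null set, using that $R_{cl}$-saturations of $\Pp$-null sets are $\Pp$-null since $\Pp$ is preserved by $R_{cl}$), applying indistinguishability to the resulting $R_{cl}$-invariant set, and then reading the required condition back to the original $B$ through the displayed equivalence.
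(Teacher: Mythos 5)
Your first two paragraphs reproduce the paper's argument exactly: the paper's entire proof of Proposition~\ref{one} is the sentence ``Taking $\Lambda=\Gamma$ gives the following proposition'', i.e.\ Lemma~\ref{lemmaF} with $\Lambda=\Gamma$ (so that $\Lambda^{-1}\cdot\rho=V$ by transitivity) together with the correspondence $P\leftrightarrow B_P$. Your $(\Leftarrow)$ direction is correct as written: for a cluster property $P$, the hypothesis applied to $B=B_P$ forces all points of $X_\infty\cap(\Gamma\cdot x)$ to agree on $B_P$ for a.e.\ $x$, and the displayed equivalence then gives $P^\pm(x,\proptoop(x))$ almost surely.

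The gap is in your $(\Rightarrow)$ direction. You have correctly located the sore point --- $P_B$ is a cluster property only when $B$ is $R_{cl}$-invariant --- but the proposed repair cannot work. The $R_{cl}$-saturation of a Borel set of positive measure typically differs from it on a set of \emph{positive} measure; the fact that saturations of null sets are null is of no help here, so nothing can be ``read back'' from the saturated set to the original $B$. Indeed no repair is possible, because for a genuinely non-invariant $B$ the asserted condition is simply false under indistinguishability: take Bernoulli percolation with $p\in(p_c,p_u)$, which has indistinguishable infinite clusters by Theorem~\ref{ls}, and $B=\{x:\pi(x)(e_0)=1\}$ for a fixed edge $e_0=(\rho,s\cdot\rho)$. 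For $y=\gamma^{-1}\cdot x$ one has $y\in B$ iff the edge $\gamma\cdot e_0$ is open in $\pi(x)$, and almost surely the edges $\gamma\cdot e_0$ with $\gamma\cdot\rho\in\proptoop(x)$ are not all in the same state. (A second defect of the literal statement: even for the $R_{cl}$-invariant set $B=X\setminus X_\infty$, the equivalence $x\in B\Leftrightarrow y\in B$ fails on the positive-measure event where $x\notin X_\infty$ but $\Gamma\cdot x$ meets $X_\infty$.) The proposition is meant to be read with $B$ ranging over $R_{cl}$-\emph{invariant} Borel sets --- these are exactly the images of cluster properties under the bijection recalled just before Lemma~\ref{lemmaF} --- and with the agreement required between pairs $y,z\in X_\infty\cap(\Gamma\cdot x)$ rather than between $x$ and $y$; this is also the form in which the statement is used in the proof of Proposition~\ref{ergind}. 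Under that reading your first paragraph already constitutes the entire proof, and no saturation argument is needed or available.
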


We define the \emph{infinite locus} as $$X_{\infty}:= \{x\in X : |\mathcal{C}(\pi(x),\rho)|=\infty\}.$$
This definition coincides with the usual orbit-equivalence definition $$\{x\in X : |[x]_{R_{cl}}|=\infty\}$$ as soon as $\Gamma\acts X$ is free. 
Remember that if there is no $\pi$ in the second description, it is because it is hidden in $R_{cl}$.
Let $R$ denote the restriction of $R_{cl}$ to $X_\infty\times X_\infty$.

\begin{prop}
\label{two}
Consider a generalized percolation defined by $\G\agit (X,\Pp)$  and a $\G$-equivariant map $\pi : X\to\Omega$.
Assume that  $\Pp[X_{\infty}]>0$. Then $R$ is $\frac{\Pp}{\Pp[X_\infty]}$-ergodic if and only if for every cluster property $P$, the conditional probability $\Pp\left[P(x,\rho)|\rho\overset{\pi(x)}{\longleftrightarrow}\infty\right]$ is either 0 or 1.
\end{prop}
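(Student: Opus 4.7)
The plan is to use the bijection $B \mapsto P_B$ and its inverse $P \mapsto B_P$ recalled at the beginning of subsection~\ref{dico}: this identifies cluster properties with $R_{cl}$-invariant Borel subsets of $X$. Once this is set up, the proposition follows from a direct identification of the objects counted by each side.

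First I would observe that $X_\infty$ is itself $R_{cl}$-invariant: if $x R_{cl} x'$, with witness $\gamma$, then the cluster of $\rho$ in $\pi(x')$ is the translate by $\gamma^{-1}$ of the cluster of $\gamma\cdot\rho$ in $\pi(x)$, which is the cluster of $\rho$ in $\pi(x)$; hence these two clusters have the same cardinality. Consequently, a Borel subset $C\subset X_\infty$ is $R$-invariant if and only if it is $R_{cl}$-invariant as a subset of $X$. Combined with the bijection above, this says that the $R$-invariant Borel subsets of $X_\infty$ are exactly the sets of the form $B_P\cap X_\infty$ with $P$ a cluster property.

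Next I would compute, for any cluster property $P$,
\[
\frac{\Pp(B_P\cap X_\infty)}{\Pp(X_\infty)}=\Pp\!\left[x\in B_P \,\Big|\, \rho\overset{\pi(x)}{\longleftrightarrow}\infty\right]=\Pp\!\left[P(x,\rho) \,\Big|\, \rho\overset{\pi(x)}{\longleftrightarrow}\infty\right],
\]
using the definitions $B_P=\{x:P(x,\rho)\}$ and $X_\infty=\{x:\rho\overset{\pi(x)}{\longleftrightarrow}\infty\}$. The two implications now fall out: the forward direction applies $\frac{\Pp}{\Pp(X_\infty)}$-ergodicity of $R$ to the $R$-invariant set $B_P\cap X_\infty$; the reverse direction, given an $R$-invariant Borel $C\subset X_\infty$, produces the cluster property $P_C$ (well-defined and a cluster property because $C$ is $R_{cl}$-invariant, by the discussion before Lemma~\ref{lemmaF}), notes that $B_{P_C}=C$, and concludes from the hypothesis that $\Pp(C)/\Pp(X_\infty)\in\{0,1\}$.

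There is no real obstacle here: the content is entirely packaged into the correspondence between cluster properties and $R_{cl}$-invariant Borel sets, plus the observation that the infinite locus is $R_{cl}$-invariant. The only place where one must be slightly careful is checking that the map $P\mapsto B_P$ restricts to a bijection between \emph{cluster} properties and \emph{$R_{cl}$-invariant} Borel sets (not merely between vertex properties and arbitrary Borel sets), but this is exactly what the text records just before Lemma~\ref{lemmaF} and can be quoted without rework.
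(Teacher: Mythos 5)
Your proof is correct and follows essentially the same route as the paper: both rest on the bijection $P\mapsto B_P$ between cluster properties and $R_{cl}$-invariant Borel sets together with the identity $\Pp[B_P\cap X_\infty]/\Pp[X_\infty]=\Pp\left[P(x,\rho)\,\middle|\,\rho\overset{\pi(x)}{\longleftrightarrow}\infty\right]$. You merely make explicit a step the paper leaves implicit, namely that $X_\infty$ is $R_{cl}$-invariant so that $R$-invariant subsets of $X_\infty$ coincide with $R_{cl}$-invariant subsets of $X$ contained in $X_\infty$.
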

\begin{proof}
The relation $R$ is $\frac{\Pp}{\Pp[X_\infty]}$-ergodic if and only if, for every $R_{cl}$-invariant Borel subset $B$ of $X$, $\Pp[B\cap X_\infty]\in\{0,\Pp[X_\infty]\}$. The proposition results from the fact that, for any $R_{cl}$-invariant Borel subset of $X$ and any $x\in X$,
$$
\Pp[B\cap X_\infty]\in\{0,\Pp[X_\infty]\}\Leftrightarrow \Pp[P_B(x,\rho)\text{ and }\rho\overset{\pi(x)}{\longleftrightarrow}\infty]\in\left\{0,\Pp\left[\rho\overset{\pi(x)}{\longleftrightarrow}\infty\right]\right\}.
$$
\end{proof}

\begin{prop}[Gaboriau-Lyons, \cite{gl}]
\label{ergind}
Consider a generalized percolation defined by $\G\agit (X,\Pp)$  and a $\G$-equivariant map $\pi : X\to\Omega$.
Assume that $\G \agit (X,\Pp)$ is ergodic and $\Pp[X_{\infty}]>0$. Then the considered generalized percolation has indistinguishable infinite clusters if and only if $R$ is  $\frac{\Pp}{\Pp[X_{\infty}]}$-ergodic.
\end{prop}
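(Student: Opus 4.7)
The plan is to leverage Propositions~\ref{one} and~\ref{two}, which already unpack each side of the claimed equivalence in parallel terms, and to bridge them via the bijection $P \mapsto B_P$ between cluster properties and $R_{cl}$-invariant Borel subsets of $X$.

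For the direction ``$R$ ergodic $\Rightarrow$ indistinguishability'', I would fix a cluster property $P$, set $B := B_P$, and apply Proposition~\ref{two} to $B$ (which is $R_{cl}$-invariant) to obtain $\Pp[B \cap X_\infty] \in \{0, \Pp[X_\infty]\}$. Assume first that $\Pp[X_\infty \setminus B] = 0$. For each $\gamma \in \G$, the $\G$-invariance of $\Pp$ gives $\Pp[\gamma^{-1} \cdot x \in X_\infty \setminus B] = 0$. Unwinding the definitions via the $\G$-equivariance of $\pi$ (so that $|\mathcal{C}(\pi(\gamma^{-1}\cdot x), \rho)| = |\mathcal{C}(\pi(x), \gamma\cdot\rho)|$) and the $\G$-invariance of $P$ (so that $P(\gamma^{-1}\cdot x, \rho) = P(x, \gamma\cdot\rho)$), this event becomes exactly $\{\gamma\cdot\rho \in \proptoop(x)\} \cap \{\neg P(x, \gamma\cdot\rho)\}$. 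The countable union over $\gamma \in \G$---valid since $V = \G\cdot\rho$---then yields $\Pp[P^+(x, \proptoop(x))] = 1$; symmetrically, $\Pp[B \cap X_\infty] = 0$ forces $\Pp[P^-(x, \proptoop(x))] = 1$. Either way $\Pp[P^\pm(x, \proptoop(x))] = 1$. Notice that this direction does not call upon the ergodicity of $\G \acts (X, \Pp)$.

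For the converse, I would fix an $R_{cl}$-invariant Borel subset $B$, set $P := P_B$, and consider the events $A^\pm := \{x \in X : P^\pm(x, \proptoop(x))\}$. These are $\G$-invariant, because $\proptoop$ is $\G$-equivariant while $P$ is $\G$-invariant (so that $\proptoop(\gamma\cdot x) = \gamma\cdot \proptoop(x)$ and $P(\gamma\cdot x, \gamma\cdot v) = P(x,v)$ combine to give $A^\pm$-membership independent of a $\G$-translation). Indistinguishability asserts $\Pp[A^+ \cup A^-] = 1$. At this point the ergodicity of $\G \acts (X, \Pp)$ enters: each of $A^+, A^-$ has measure $0$ or $1$, so one of them has full measure. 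If $\Pp[A^+] = 1$, then for a.e.\ $x \in X_\infty$ one has $\rho \in \proptoop(x)$ and therefore $P(x, \rho)$, whence $x \in B$ and $\Pp[B \cap X_\infty] = \Pp[X_\infty]$; the case $\Pp[A^-] = 1$ yields $\Pp[B \cap X_\infty] = 0$ analogously. Proposition~\ref{two} then concludes that $R$ is ergodic.

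The only genuine subtlety lies in this second direction: indistinguishability on its own produces the disjunction $A^+ \cup A^-$ of full measure, not a dichotomy on the individual pieces. Without the ergodicity hypothesis on $\G \acts (X, \Pp)$, one could have $0 < \Pp[A^\pm] < 1$, which would block the conclusion that $\Pp[B \cap X_\infty] \in \{0, \Pp[X_\infty]\}$. Everything else is a bookkeeping exercise with the $\G$-equivariance of $\pi$ and the $\G$-invariance of $P$, combined with the $\G$-invariance of $\Pp$ and a countable union over $\G$.
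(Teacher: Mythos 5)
Your proof is correct and takes essentially the same route as the paper: the forward direction saturates the null piece of $B\cap X_\infty$ over $\G$ (this is exactly the content of lemma~\ref{lemmaF}/proposition~\ref{one}), and the converse uses the ergodicity of $\G\acts(X,\Pp)$ to upgrade the disjunction $\Pp[P^+\vee P^-]=1$ to a dichotomy, which is the remark at the end of subsection~\ref{indisclus} combined with proposition~\ref{two}. You merely inline the unwinding that the paper delegates to those auxiliary statements, and your observation that ambient ergodicity is needed only for the converse matches the paper.
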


As a preliminary to the next subsection, we detail the proof of this theorem, which can be found in \cite{gl}.

\vspace{0.35 cm}

\begin{proof}
Assume that $R$ is ergodic.
Let $B$ be a $R_{cl}$-invariant Borel subset of $X$. Then, some $B'\in \{B,X\backslash B\}$ satisfies $\Pp[B'\cap X_\infty]=0$. Hence, $\Pp\left[\bigcup_{\gamma\in\Gamma}\gamma^{-1}\cdot(B'\cap X_\infty)\right]=0$, so that
$$
\Pp\left[\{x\in X:\forall y\in X_\infty\cap (\Gamma\cdot x),~ y\in X\backslash B'\}\right]=1.
$$
The first implication is thus a consequence of Proposition~\ref{one}.

The converse statement stems directly from the remark at the end of Subsection~\ref{indisclus} --- which makes crucial use of the ergodicity of $\Gamma\acts X$ --- and Proposition~\ref{two}.
\end{proof}

\subsection{Two lemmas on asymptotic invariance}

To translate properly the notion of strong ergodicity from orbit equivalence theory to percolation theory, we will need the following lemma. Since it holds with a high level of generality, and since the symbols $X$ and $R$ have a specific meaning in this section, we denote by $(Y,\mu)$ a standard probability space and by $R_Y$ a countable Borel equivalence relation on $Y$ that preserves the measure $\mu$.

\begin{lem}
\label{noinjectivity}
A sequence $(B_n)$ of Borel subsets of $Y$ is $\mu$-asymptotically $R_Y$-invariant if and only if for every Borel (not necessarily bijective) map $\phi : Y\to Y$ whose graph is included in $R_Y$, the $\mu$-measure of $\phi^{-1}(B_n)\triangle B_n$ converges to 0 as $n$ goes to infinity.
\end{lem}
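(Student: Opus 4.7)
The plan is to handle the two implications separately. The ``easy'' direction is that the condition in the lemma (applied to all Borel $\phi$) implies asymptotic $R_Y$-invariance. I would first record the identity: any $\phi\in[R_Y]$ is a $\mu$-preserving Borel automorphism, so
\[
\mu(\phi^{-1}(B_n)\triangle B_n) = \mu\bigl(\phi(\phi^{-1}(B_n)\triangle B_n)\bigr) = \mu(B_n\triangle\phi(B_n)).
\]
Since $[R_Y]$ is contained in the class of Borel maps $Y\to Y$ with graph in $R_Y$, the hypothesis applied to $\phi$ yields $\mu(B_n\triangle\phi(B_n))\to 0$ for each $\phi\in[R_Y]$, which is exactly asymptotic $R_Y$-invariance.

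The converse is the substantive content, and the key tool is Feldman--Moore (Theorem~\ref{fm}). I would write $R_Y$ as the orbit equivalence relation of a Borel action of a countable group $\Gamma=\{\gamma_0,\gamma_1,\dots\}$. Given a Borel $\phi:Y\to Y$ with graph in $R_Y$, each point $y$ satisfies $\phi(y)\in\Gamma\cdot y$, so the sets $A_i:=\{y\in Y:\phi(y)=\gamma_i\cdot y\}$ are Borel and cover $Y$. Setting $Y_i:=A_i\setminus\bigcup_{j<i}A_j$ produces a Borel partition of $Y$ on which $\phi$ coincides pointwise with the full Borel automorphism $\phi_i:y\mapsto\gamma_i\cdot y\in[R_Y]$. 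Partitioning $\phi^{-1}(B_n)$ and $B_n$ along the $Y_i$'s gives
\[
\phi^{-1}(B_n)\triangle B_n \;=\; \bigsqcup_{i\geq 0} Y_i\cap\bigl(\phi_i^{-1}(B_n)\triangle B_n\bigr).
\]

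A standard truncation then closes the argument. Given $\varepsilon>0$, pick $N$ with $\mu(\bigcup_{i>N}Y_i)<\varepsilon$, so that
\[
\mu\bigl(\phi^{-1}(B_n)\triangle B_n\bigr) \;\leq\; \sum_{i\leq N}\mu\bigl(\phi_i^{-1}(B_n)\triangle B_n\bigr)+\varepsilon.
\]
Asymptotic $R_Y$-invariance applied to each of the finitely many $\phi_i\in[R_Y]$ (combined with the identity of the first paragraph) drives every term of the finite sum to $0$ as $n\to\infty$; hence $\limsup_n\mu(\phi^{-1}(B_n)\triangle B_n)\leq\varepsilon$, and letting $\varepsilon\to 0$ concludes. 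The main --- and really the only --- point that requires care is the measurable decomposition: turning a plain Borel map $\phi$ into a countable Borel partition on which it agrees with elements of $[R_Y]$. This is precisely what Feldman--Moore delivers; the remaining estimate is a routine countable-truncation argument.
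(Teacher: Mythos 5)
Your proof is correct and follows essentially the same route as the paper: the hard direction is handled by the identical decomposition of $\phi$ into countably many elements of $[R_Y]$ via Feldman--Moore, followed by the same $\varepsilon$-truncation to a finite union. You merely spell out two details the paper leaves implicit --- the explicit construction of the partition $Y_i:=A_i\setminus\bigcup_{j<i}A_j$ and the passage between $\phi(B_n)$ and $\phi^{-1}(B_n)$ via measure preservation in the easy direction --- which the paper dismisses as "tautological" and "can be proved using theorem~\ref{fm}" respectively.
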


\begin{rem}
This result is false if we replace $\phi^{-1}(B_n)$ with $\phi(B_n)$. Indeed, a Borel map whose graph is included in $R_Y$ may have a range of small measure. For instance, take the ``first-return in $[0,\epsilon[$ map'' for an action of $\Z$ on $\R/\Z \simeq [0,1[$ by irrational translation.
\end{rem}
\begin{proof}
One implication is tautological. To establish the other, assume that $(B_n)$ is asymptotically invariant and take $\phi$ a Borel map  from $Y$ to $Y$ whose graph is included in $R_Y$.
There exist 
\begin{itemize}
\item a partition $Y=\bigsqcup_{i\in\N} Y_i$ of $Y$ into countably many Borel subsets

\item and countably many $\varphi_i\in [R_Y]$
\end{itemize}
such that for all $i$, the maps $\phi$ and $\varphi_i$ coincide on $Y_i$. (This can be proved using Theorem~\ref{fm}.)
Let $\epsilon$ be a positive real number. Take $N$ such that $\mu\left(\bigsqcup_{i>N} Y_i\right)<\epsilon$.
For every $i$ and $n$, we have,
\begin{eqnarray*}
\phi^{-1}(B_n)\triangle B_n & \overset{\epsilon}{\simeq} & \bigsqcup_{i\leq N}  Y_i \cap (\phi^{-1}(B_n)\triangle B_n)\\
 & = & \bigsqcup_{i\leq N}  Y_i \cap (\varphi_{i}^{-1}(B_n)\triangle B_n)\\
 &\subset & \bigcup_{i\leq N} \varphi_{i}^{-1}(B_n)\triangle B_n,\\
\end{eqnarray*}
where $A\overset{\epsilon}{\simeq}B$ means that $\mu(A\triangle B)\leq \epsilon$.
Since  $\mu\left(\bigcup_{i\leq N} \varphi_{i}^{-1}(B_n)\triangle B_n\right)$ goes, by hypothesis, to 0 as $n$ goes to infinity, the lemma is established.
\end{proof}

We will also need the following lemma.

\begin{lem}
\label{restriction}
If $\Gamma\acts (Y,\mu)$ is a strongly ergodic action and if $Z$ is a Borel subset of $Y$ of positive measure, then $(Z,\frac{\mu}{\mu(Z)},(R_{\Gamma\acts Y})_{|Z})$ is strongly ergodic.
\end{lem}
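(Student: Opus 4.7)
The plan is to lift an asymptotically $R_{|Z}$-invariant sequence $(C_n)$ on $Z$ to an asymptotically $\Gamma$-invariant sequence $(B_n)$ on $Y$, apply strong ergodicity of $\Gamma\acts(Y,\mu)$ upstairs, and transfer the conclusion back down to $Z$. Since strong ergodicity implies ergodicity, the $\Gamma$-saturation of $Z$ is $\mu$-co-null, so we may assume $\Gamma\cdot Z=Y$ modulo a null set. Fixing an enumeration $\Gamma=\{\gamma_0=e,\gamma_1,\dots\}$, define a Borel ``first return'' map $\pi:Y\to Z$ by $\pi(y):=\gamma_{k(y)}\cdot y$ where $k(y):=\min\{i:\gamma_i\cdot y\in Z\}$. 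Then $\pi_{|Z}=\mathrm{id}_Z$, and setting $B_n:=\pi^{-1}(C_n)\subset Y$ gives $B_n\cap Z=C_n$.

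The key step is to show that $(B_n)$ is asymptotically $\Gamma$-invariant. Fix $\gamma\in\Gamma$ and decompose $Y$ into the countable family of Borel pieces $V_{i,j}:=\{y\in Y:k(y)=i,\ k(\gamma^{-1}y)=j\}$. Performing on each $V_{i,j}$ the $\mu$-preserving change of variable $z:=\pi(\gamma^{-1}y)=\gamma_j\gamma^{-1}y$, one maps $V_{i,j}$ bijectively onto a Borel set $Z_{i,j}\subset Z$ on which the group element $\delta_{i,j}:=\gamma_i\gamma\gamma_j^{-1}$ sends $z$ into $Z$ with graph in $R_{|Z}$, and one checks that $\pi(y)=\delta_{i,j}\cdot z$. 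Extending $\delta_{i,j}|_{Z_{i,j}}$ by the identity on $Z\setminus Z_{i,j}$ yields a Borel map $\eta_{i,j}:Z\to Z$ with graph in $R_{|Z}$, and a direct computation gives
\[
\int_{V_{i,j}}\bigl|1_{C_n}(\pi(y))-1_{C_n}(\pi(\gamma^{-1}y))\bigr|\,d\mu(y)=\mu\bigl(\eta_{i,j}^{-1}(C_n)\triangle C_n\bigr).
\]
Lemma~\ref{noinjectivity} applied to $R_{|Z}$ ensures this quantity tends to $0$ as $n\to\infty$ for each fixed $(i,j)$; combining it with the trivial bound $\mu(\eta_{i,j}^{-1}(C_n)\triangle C_n)\leq\mu(V_{i,j})$ and a tail truncation (picking a finite $F\subset\N^2$ with $\sum_{(i,j)\notin F}\mu(V_{i,j})<\varepsilon$) yields $\mu(\gamma B_n\triangle B_n)\to 0$.

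Strong ergodicity of $\Gamma\acts(Y,\mu)$ then forces $\mu(B_n)(1-\mu(B_n))\to 0$. From the inclusions $C_n\subset B_n$ and $Z\setminus C_n\subset Y\setminus B_n$ one deduces $\mu(Z)^{2}\,\mu_Z(C_n)(1-\mu_Z(C_n))\leq\mu(B_n)(1-\mu(B_n))$, so $\mu_Z(C_n)(1-\mu_Z(C_n))\to 0$, as required.

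The main obstacle will be the asymptotic $\Gamma$-invariance of $(B_n)$: the decomposition over $V_{i,j}$ produces infinitely many error terms indexed by $\N^2$, and although each individual term is forced to zero by Lemma~\ref{noinjectivity}, this only suffices once the tail is separately controlled by the truncation above. The non-bijective version of asymptotic invariance supplied by Lemma~\ref{noinjectivity} is essential here, since the auxiliary maps $\eta_{i,j}$ need not be bijections of $Z$.
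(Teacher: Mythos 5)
Your proof is correct, but it takes a genuinely different route from the paper's. You lift an asymptotically invariant sequence from $Z$ to $Y$ \emph{exactly}, via the first-return map $\pi(y)=\gamma_{k(y)}\cdot y$ (well defined a.e.\ because ergodicity makes $\Gamma\cdot Z$ conull), so that $B_n\cap Z=C_n$ on the nose; the cost is that the asymptotic $\Gamma$-invariance of $(B_n)$ must be verified by decomposing $Y$ into the countably many pieces $V_{i,j}$ and invoking Lemma~\ref{noinjectivity} once per piece (correctly in its non-bijective form, since the induced maps $\eta_{i,j}$ need not be invertible), with a tail truncation to sum the errors. The paper instead builds only an \emph{approximate} lift: it saturates $B_n$ over finite sets $\Lambda_n\nearrow\Gamma$ chosen by a diagonal argument, taking $B'_n:=\bigcap_{\gamma\in\Lambda_n}\gamma\cdot(B_n\cup(Y\setminus Z))$, proves $\mu(B_n\triangle(B'_n\cap Z))\to 0$, and checks asymptotic $\Gamma$-invariance of $(B'_n)$ using the companion sets $B^{\Lambda}_{n,-}$; it too leans on Lemma~\ref{noinjectivity} at one point. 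Your construction is the more standard ``induced relation'' argument and yields a cleaner correspondence between $C_n$ and $B_n$ (hence the simple final inequality $\mu(Z)^2\,\mu_Z(C_n)(1-\mu_Z(C_n))\le\mu(B_n)(1-\mu(B_n))$), whereas the paper's intersection trick avoids introducing the first-return map and its $\N^2$-indexed bookkeeping at the price of a diagonal extraction and an approximate compatibility condition. Both arguments are complete and rely on the same two external inputs: Lemma~\ref{noinjectivity} and the strong ergodicity of the ambient action.
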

\begin{rem}
If one replaces ``strongly ergodic'' with ``ergodic'' in the above statement, the proof is straightforward: one just needs to take $B$ an $R$-invariant set and apply ergodicity to $\Gamma\cdot B$. The proof gets a bit more technical in the strong case because one needs to take a \emph{suitable} $\Gamma$-saturation of $B$.
\end{rem}
\begin{proof}
Set $R:=(R_{\Gamma\acts Y})_{|Z}$. Let $(B_n)$ denote a $\frac{\mu}{\mu(Z)}$-asymptotically $R$-invariant sequence of Borel subsets of $Z$. It is enough to show that there exists a sequence $(B'_n)$ of $\mu$-asymptotically $\Gamma$-invariant subsets of $Y$ satisfying the following condition:
\begin{equation}
\label{eq}\tag{$\star$}
\mu(B_n\triangle(B'_n\cap Z))\underset{n\to\infty}{\longrightarrow}0.
\end{equation}
Indeed, by strong ergodicity of the action, the sequence $(\mu(B_n'))$ would then have no accumulation point other than 0 and 1, so that $\mu(B_n'\cap Z)$ would have no accumulation point other than 0 and $\mu(Z)$, which ends the proof together with condition (\ref{eq}).

For any finite subset $\Lambda$ of $\Gamma$, set
$$
B_{n,+}^{\Lambda} := \bigcap_{\gamma\in\Lambda} \gamma\cdot(B_n\cup(Y\backslash Z))~~~~\text{and}~~~~B_{n,-}^{\Lambda} := \bigcap_{\gamma\in\Lambda} \gamma\cdot((Z\backslash B_n)\cup(Y\backslash Z)).
$$
If $\Lambda$ is fixed and finite, the measure of $B_{n,+}^\Lambda\cup B_{n,-}^\Lambda$ converges to 1 as $n$ goes to infinity. 

\vspace{0.2 cm}

\begin{small}
Proceeding by contradiction, we assume that there exist $\eta$ and $\gamma$ in $\Lambda$ such that
$$
\limsup_n \mu(\{y\in Y : \eta\cdot y \in B_n\text{ and }\gamma\cdot y \in Z\backslash B_n\})>0.
$$
The measure $\mu$ being $\Gamma$-invariant, it follows that $$\limsup_n \mu(\{y\in Y : y \in B_n\text{ and }\gamma\eta^{-1}\cdot y \in Z\backslash B_n\})>0$$which contradicts the $\frac{\mu}{\mu(Z)}$-asymptotic $R$-invariance of $(B_n)$. More precisely, the mapping $\varphi : Z\to Z$ that sends $y$ to $\gamma\eta^{-1}\cdot y$ if the latter belongs to $Z$ and to $y$ otherwise contradicts Lemma~\ref{noinjectivity}.
\end{small}

\vspace{0.2 cm}

By a diagonal argument, one can find a sequence $(\Lambda_n)$ of finite subsets of $\Gamma$ such that, setting $\Lambda_n^{(2)}:=\{\gamma\eta:\gamma,\eta\in\Lambda_n\}$, the following two conditions hold:
\begin{itemize}
\item the sequence $(\Lambda_n)$ is non-decreasing and its union is $\Gamma$,

\item $\mu(B_{n,+}^{\Lambda_n^{(2)}}\cup B_{n,-}^{\Lambda_n^{(2)}})\underset{n\to\infty}{\longrightarrow}1$.
\end{itemize}
Set $B'_n:=B_{n,+}^{\Lambda_n}$.
For $n$ large enough, $\Lambda_n$ contains the identity element, so that 
$$
B_n\cap\left(B_{n,+}^{\Lambda_n}\cup B_{n,-}^{\Lambda_n}\right)=B_n\cap B_{n,+}^{\Lambda_n}=Z\cap B_{n,+}^{\Lambda_n}.
$$
It follows from this and the second condition that condition (\ref{eq}) is satisfied. To show that $(B'_n)$ is $\mu$-asymptotically $\Gamma$-invariant, take $\gamma\in\Gamma$. Taking $n$ large enough guarantees that $\gamma\in\Lambda_n$. The measure $\mu$ being $\Gamma$-invariant, we only need to show that $\mu(B'_n\backslash \gamma\cdot B'_n)$ tends to 0. To do so, it is enough to establish that the measure of $B'_n\backslash B_{n,+}^{\Lambda_n^{(2)}}$ tends to 0. Notice that
$$
B'_n\backslash B_{n,+}^{\Lambda_n^{(2)}}\subset Y\backslash\left(\left(B_{n,+}^{\Lambda_n^{(2)}}\cup B_{n,-}^{\Lambda_n}\right)\cap (\Lambda_n\cdot Z)\right).
$$

\vspace{0.2 cm}
\begin{small}Indeed, the sets $B_{n,+}^{\Lambda_n}\cap (\Lambda_n \cdot Z)$ and $ B_{n,-}^{\Lambda_n}\cap (\Lambda_n \cdot Z)$ are disjoint.\end{small}
\vspace{0.2 cm}

Since $Z$ has positive measure and $\Gamma\acts (Y,\mu)$ is ergodic, the measure of $\Lambda_n\cdot Z$ converges to 1. We conclude using the second condition.
\end{proof}

\subsection{Strong version}
\label{strong}

Consider $\Pp_p$ for $p \in (p_c(\mathcal{G}),p_u(\mathcal{G}))$.
By Theorems~\ref{corocf}, \ref{ls} and \ref{themm} and Proposition~\ref{ergind}, its cluster equivalence relation is strongly ergodic on the infinite locus.
One would like to deduce from this information a strong form of indistinguishability of $\Pp_p$. This idea is due to Damien Gaboriau. 

Another way to describe our goal is to say that we look for a proposition similar to Proposition~\ref{ergind} for strong notions. This is achieved in Theorem~\ref{ergindfort}.

Again, everything will be stated for a generalized percolation, with the same notation as previously.

\begin{defi}
We call \emph{re-anchoring}, or \emph{rerooting}, a  Borel map
$$
\begin{array}{lccl}
\alpha : & X\times V & \longrightarrow &  V \\
    & (x,v) & \longmapsto & u_{x,v}^\alpha \end{array}
$$
that is $\G$-equivariant under the diagonal action and such that $$\forall (x,v)\in X\times V,~ u_{x,v}^\alpha \overset{\pi(x)}{\longleftrightarrow} v.$$
\end{defi}

In words, a re-anchoring is a $\G$-equivariant way of changing of position within one's cluster.

\begin{exem}
If $\gamma\in \G$, setting 
$$u_{x,v}^{\alpha_\gamma} := \left\{
   	 \begin{array}{ll}
    	    \gamma\cdot v & \mbox{if } v\overset{\pi(x)}{\fleche}\gamma\cdot v \\
       	    v & \mbox{otherwise}
	    \end{array}\right. $$ defines a re-anchoring.
\end{exem}

\begin{defi}
Let $(P_n)$ be a sequence of vertex properties. Let $\Pp$ be a percolation. We will say that $(P_n)$ is an \emph{asymptotic cluster property} (for $\Pp$) if, for any rerooting $\alpha$,
\begin{equation*}
\label{convv}
\forall v\in V,~\Pp\left[\left\{x\in X:P_n(x,v)\Longleftrightarrow P_n\left(x,u^\alpha_{x,v}\right)\right\}\right]\underset{n\to\infty}{\longrightarrow}1.
\end{equation*}
\end{defi}

\begin{rem}
For a given rerooting, the convergence above holds for all $v$ as soon as it holds for one, by $\G$-invariance and -equivariance.
\end{rem}

\begin{rem}
This definition of ``depending asymptotically only on one's cluster'' is quite natural if one looks for a translation of strong ergodicity, but it may not be the clearest definition from a probabilistic point of view.
For a probabilistically more natural definition, see Subsection~\ref{natural}.
\end{rem}

\begin{nota}
In what follows, $A\Subset B$ means that $A$ is a finite subset of $B$.
\end{nota}

\begin{defi}
We will say that $\Pp$ satisfies the \emph{Strong Indistinguishability Property} if, for every $\Pp$-asymptotic cluster property $(P_n)$ and every $F\Subset V$,
$$
\Pp[P_n^\pm(x,\proptoop(x)\cap F)]\underset{n\to\infty}{\longrightarrow}1.
$$
\end{defi}

\begin{rem}
Subsection~\ref{natural} makes the definition of asymptotic cluster property look like the conclusion of strong indistinguishability.
\end{rem}

\begin{lem}
\label{bij}
The map $(B_n)\mapsto (P_{B_n})$ is a bijection from the set of the $\Pp$-asymptotically $R_{cl}$-invariant sequences of Borel subsets of $X$ onto the set of $\Pp$-asymptotic cluster properties. Its inverse map is $(P_n)\mapsto (B_{P_n})$.
\end{lem}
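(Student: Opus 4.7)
The plan is to translate both sides of the desired equivalence into the common language of Borel self-maps of $X$ whose graph is contained in $R_{cl}$, and then invoke lemma~\ref{noinjectivity}. The individual bijection between vertex properties and Borel subsets of $X$ has already been recorded in subsection~\ref{dico}; applying it term by term yields a bijection between sequences. All that remains is to check that this term-by-term bijection matches the condition ``$\Pp$-asymptotically $R_{cl}$-invariant'' on one side with ``$\Pp$-asymptotic cluster property'' on the other.

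The core step is to set up a correspondence between rerootings and Borel maps $X \to X$ with graph in $R_{cl}$. Given a rerooting $\alpha$, freeness of the $\G$-action on $V$ lets me write $u^\alpha_{x,\rho} = g_\alpha(x) \cdot \rho$ for a uniquely determined Borel map $g_\alpha : X \to \G$; setting $\phi_\alpha(x) := g_\alpha(x)^{-1} \cdot x$ then defines a Borel self-map of $X$ whose graph lies in $R_{cl}$, because the rerooting condition $g_\alpha(x) \cdot \rho \overset{\pi(x)}{\fleche} \rho$ is exactly the definition of $R_{cl}$-equivalence between $x$ and $\phi_\alpha(x)$. Conversely, given a Borel $\phi : X \to X$ with graph in $R_{cl}$, a measurable selection using the countability of $\G$ yields a Borel $g : X \to \G$ with $\phi(x) = g(x)^{-1} \cdot x$ and $g(x) \cdot \rho \overset{\pi(x)}{\fleche} \rho$; setting $u^\alpha_{x,\rho} := g(x) \cdot \rho$ and extending $\G$-equivariantly produces a rerooting $\alpha$ with $\phi_\alpha = \phi$. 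These two constructions are inverse to each other on the nose (at the root) and by equivariance everywhere.

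Granted this correspondence, the key identity is immediate: for any Borel $B \subset X$ and any rerooting $\alpha$, since $P_B(x, \rho) = 1_B(x)$ and $P_B(x, u^\alpha_{x,\rho}) = 1_B(g_\alpha(x)^{-1} \cdot x) = 1_{\phi_\alpha^{-1}(B)}(x)$, one has
$$\{x \in X : P_B(x, \rho) \not\Leftrightarrow P_B(x, u^\alpha_{x, \rho})\} = B \,\triangle\, \phi_\alpha^{-1}(B).$$
Invoking the remark that reduces the asymptotic cluster property to the single vertex $v = \rho$ by $\G$-equivariance, the sequence $(P_{B_n})$ is a $\Pp$-asymptotic cluster property iff $\Pp(B_n \,\triangle\, \phi_\alpha^{-1}(B_n)) \to 0$ for every rerooting $\alpha$, iff (by the correspondence of the previous paragraph) $\Pp(B_n \,\triangle\, \phi^{-1}(B_n)) \to 0$ for every Borel map $\phi : X \to X$ with graph in $R_{cl}$, iff (by lemma~\ref{noinjectivity}) $(B_n)$ is $\Pp$-asymptotically $R_{cl}$-invariant.

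The main obstacle is bridging the gap between rerootings, which typically induce \emph{non-injective} maps $\phi_\alpha$ (for instance a rerooting that always picks a distinguished vertex in the cluster is far from bijective), and the bijective elements of $[R_{cl}]$ appearing in the definition of asymptotic invariance. It is precisely here that lemma~\ref{noinjectivity} is indispensable: without it, one could not close the equivalence, since the family $\{\phi_\alpha\}$ coincides with the class of \emph{arbitrary} Borel self-maps of $X$ with graph in $R_{cl}$, not just the bijective ones.
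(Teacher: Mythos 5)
Your proposal is correct and follows essentially the same route as the paper: the translation $\psi$ between rerootings and Borel self-maps $\phi:X\to X$ with graph in $R_{cl}$, the identity $\{x : P_{B}(x,\rho)\not\Leftrightarrow P_{B}(x,u^\alpha_{x,\rho})\}=B\triangle\phi_\alpha^{-1}(B)$, and lemma~\ref{noinjectivity} to pass from the non-injective maps $\phi_\alpha$ to the bijective elements of $[R_{cl}]$ are exactly the ingredients of the paper's argument (the paper even adds a remark stressing that this last lemma is unavoidable for precisely the reason you give). The only difference is organizational: you run a single chain of equivalences through the class of all Borel maps with graph in $R_{cl}$, whereas the paper treats the two directions separately, using the measurable selection of $\gamma_x$ only in the direction starting from $\phi\in[R]$.
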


\begin{proof}
First, let $(B_n)$ be a $\Pp$-asymptotically $R_{cl}$-invariant sequence of Borel subsets of $X$ and set $P_n:=B_{P_n}$. We show that $(P_n)$ is a $\Pp$-asymptotic cluster property.

 Let $\alpha$ be a rerooting.
Since $(x,v)\mapsto (x,u_{x,v}^\alpha)$ is $\G$-equivariant, it induces a map $\overline{\alpha}:\G\backslash (X\times V)\to \G\backslash (X\times V)$. Set $$\phi := \psi\circ\overline{\alpha}\circ\psi^{-1},$$ where $\psi$ is the bijection introduced in Subsection~\ref{dico}. More explicitly, we have $\phi : x\mapsto \gamma_x^{-1}\cdot x$, where $\gamma_x$ is defined by $$u^\alpha_{x,\rho}=\gamma_x\cdot \rho.$$
The graph of this Borel map is a subset of $R$. By Lemma~\ref{noinjectivity}, the probability of $B_n\triangle\phi^{-1}(B_n)$ goes to 0 as $n$ goes to infinity. 
As a consequence, $(P_n)$ is an asymptotic cluster property.

\vspace{0.15 cm}

Now, let $(P_n)$ be a $\Pp$-asymptotic cluster property and set $B_n:=B_{P_n}$. We show that $(B_n)$ is $\Pp$-asymptotically $R_{cl}$-invariant.

Let $\phi \in [R]$. Since $R_{cl}\subset R_{\G\acts X}$, one can define a Borel map
$$
\begin{array}{l|ccl}
 & X_\infty & \longrightarrow & \G \\
    & x & \longmapsto & \gamma_x \end{array}
$$
such that
$
\forall x\in X,~ \phi(x)=\gamma_x^{~-1}\cdot x.
$
Define $\alpha$ by $u_{x,\eta\cdot \rho}^\alpha:=\eta\cdot \gamma_{\eta^{-1}\cdot x}$. This is a rerooting.
We have
\begin{eqnarray*}
\phi^{-1}(B_n)&=&\left\{x\in X : P_n(\phi(x),\rho)\right\} \\
&=&\left\{x\in X : P_n(\gamma_x^{-1}\cdot x,\rho)\right\} \\
&=&\left\{x\in X : P_n(x,\gamma_x\cdot\rho)\right\}\text{~~~~~~~by }\G\text{-invariance of }P_n\\
&=&\left\{x\in X : P_n(x,u^{\alpha}_{x,\rho})\right\} \\
\end{eqnarray*}
Since $(P_n)$ is a $\Pp$-asymptotic cluster property, we deduce from this that the probability of $B_n\triangle \phi^{-1}(B_n)$ tends to 0.
Since this holds for every $\phi\in[R]$, the sequence $(B_n)$ is ${\Pp}$-asymptotically $R_{cl}$-invariant.
\end{proof}

\begin{rem}
In the previous proof, the use of Lemma~\ref{noinjectivity} allows us to obtain the asymptotic-cluster-property condition for all rerootings, while a ``literal translation'' would have given it only for the vertex-bijective ones --- the rerootings $(x,v)\mapsto u_{x,v}$ such that, for every $x$, the map $v\mapsto u_{x,v}$ is bijective. From the percolation point of view, vertex-bijective rerootings are absolutely non-natural objects: the use of such a lemma was unavoidable.
\end{rem}
From Lemma~\ref{lemmaF} and Lemma~\ref{bij}, one deduces the following statement.

\begin{prop}
\label{strongone}
A generalized percolation satisfies the Strong Indistinguishability Property if and only if for every $\Pp$-asymptotically $R_{cl}$-invariant sequence $(B_n)$ of Borel subsets of $X$, for every $\Lambda\Subset \Gamma$,
$$
\Pp\left[\{x\in X:\forall y,z\in X_\infty \cap (\Lambda\cdot x), ~y\in B_n\Longleftrightarrow z\in B_n\}\right]\underset{n\to\infty}{\longrightarrow}1.
$$
\end{prop}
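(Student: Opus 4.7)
The plan is to compose the two lemmas, which do all the real work: Lemma~\ref{lemmaF} converts the ``$P^\pm$ on a finite piece of the infinite locus'' predicate into the ``agreement on $X_\infty \cap (\Lambda\cdot x)$'' predicate, and Lemma~\ref{bij} identifies asymptotic cluster properties with asymptotically $R_{cl}$-invariant sequences of Borel subsets of $X$. Combining them should yield the proposition by a pure bookkeeping argument.

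First I would observe that, since the action $\G \acts V$ is transitive (the Cayley graph of $\G$ is by construction a $\G$-set on which $\G$ acts freely and transitively), every finite $F \Subset V$ can be written as $F = \Lambda^{-1}\cdot\rho$ for some $\Lambda \Subset \G$, and conversely every such $\Lambda^{-1}\cdot\rho$ is a finite subset of $V$. Then, for any vertex property $P$ and any $x \in X$, Lemma~\ref{lemmaF} gives the pointwise equivalence
$$
P^\pm(x,\proptoop(x) \cap F) \Longleftrightarrow \bigl(\forall y,z \in X_\infty \cap (\Lambda\cdot x),~ y \in B_P \Longleftrightarrow z \in B_P\bigr),
$$
and integrating in $x$ yields
$$
\Pp\bigl[P^\pm(x,\proptoop(x) \cap F)\bigr] = \Pp\bigl[\{x : \forall y,z \in X_\infty \cap (\Lambda\cdot x),~ y\in B_P \Longleftrightarrow z \in B_P\}\bigr].
$$

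Next I would invoke Lemma~\ref{bij}: the map $(B_n) \mapsto (P_{B_n})$ is a bijection from asymptotically $R_{cl}$-invariant sequences onto asymptotic cluster properties, with inverse $(P_n) \mapsto (B_{P_n})$; in particular $B_{P_{B_n}} = B_n$ and $P_{B_{P_n}} = P_n$. Applied to a sequence $(P_n)$ or $(B_n)$, the probability identity above yields, for each $\Lambda \Subset \G$ and $F = \Lambda^{-1}\cdot\rho$,
$$
\Pp\bigl[P_n^\pm(x,\proptoop(x) \cap F)\bigr] = \Pp\bigl[\{x : \forall y,z \in X_\infty \cap (\Lambda\cdot x),~ y \in B_{P_n} \Longleftrightarrow z \in B_{P_n}\}\bigr].
$$

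Finally I would conclude by quantification. If strong indistinguishability holds, then given any asymptotically $R_{cl}$-invariant $(B_n)$ and any $\Lambda \Subset \G$, set $P_n := P_{B_n}$ (an asymptotic cluster property, by Lemma~\ref{bij}) and $F := \Lambda^{-1}\cdot\rho$: the left-hand side of the displayed identity tends to $1$, hence so does the right-hand side, which equals the expression in the proposition since $B_{P_n} = B_n$. Conversely, if the condition of the proposition holds, then given any asymptotic cluster property $(P_n)$ and any $F \Subset V$, write $F = \Lambda^{-1}\cdot\rho$ and set $B_n := B_{P_n}$ (asymptotically $R_{cl}$-invariant by Lemma~\ref{bij}): the right-hand side tends to $1$, hence so does the left-hand side, giving strong indistinguishability.

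There is no real obstacle: modulo the transitivity observation, which identifies the quantifications over $F \Subset V$ and $\Lambda \Subset \G$, the proof is a line-by-line translation using the two lemmas. The only points worth double-checking are that Lemma~\ref{lemmaF} is applied with exactly the $\Lambda$ corresponding to $F$, and that the two bijections of Lemma~\ref{bij} are genuinely inverse so that one can freely switch between $(B_n)$ and $(P_n)$ without losing the asymptotic conditions.
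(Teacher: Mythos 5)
Your proof is correct and follows exactly the route the paper intends: the paper itself gives no further argument, stating only that the proposition is deduced from Lemmas~\ref{lemmaF} and~\ref{bij}, and your write-up supplies precisely that bookkeeping (the pointwise equivalence from Lemma~\ref{lemmaF} integrated over $x$, the bijection of Lemma~\ref{bij}, and the identification of finite $F\Subset V$ with finite $\Lambda\Subset\Gamma$ via the free transitive action on vertices).
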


\begin{prop}
\label{strongtwo}
Consider a generalized percolation such that  $\Pp[X_{\infty}]>0$. 
The following assertions are equivalent:
\begin{enumerate}
\item the relation $R$ is $\frac{\Pp}{\Pp[X_\infty]}$-strongly ergodic,

\item for every asymptotic cluster property $(P_n)$, there exists $(\epsilon_n)\in\{-,+\}^\N$ such that
$$
\forall F \Subset V,~\Pp\left[P_n^{\epsilon_n}(x,\proptoop(x)\cap F)\right]\underset{n\to\infty}{\longrightarrow}1,
$$

\item for every asymptotic cluster property $(P_n)$, there exists $(\epsilon_n)\in\{-,+\}^\N$ such that
$$
\Pp\left[P_n^{\epsilon_n}(x,\rho)|\rho\overset{\pi(x)}{\fleche}\infty\right]\underset{n\to\infty}{\longrightarrow}1.
$$
\end{enumerate}
\end{prop}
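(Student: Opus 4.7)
\emph{Proof plan.} I will prove $(2)\Leftrightarrow(3)$ and $(1)\Leftrightarrow(3)$. The backbone is Lemma~\ref{bij}, which places $\Pp$-asymptotic cluster properties in bijection with $\Pp$-asymptotically $R_{cl}$-invariant sequences $(B_n)$ of Borel subsets of $X$ via $(P_n)\leftrightarrow(B_{P_n})$. The auxiliary observation used throughout is that $X_\infty$ is $R_{cl}$-saturated: if $x R_{cl} y$, the $\pi(y)$-cluster of $\rho$ is a $\G$-translate of the $\pi(x)$-cluster of $\rho$, so $x\in X_\infty \Leftrightarrow y\in X_\infty$. Hence every $\phi\in[R_{cl}]$ preserves $X_\infty$ and restricts to an element $\phi|_{X_\infty}\in[R]$; reciprocally, any $\phi\in[R]$ extends to $\tilde\phi\in[R_{cl}]$ by the identity on $X\setminus X_\infty$. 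This yields a two-sided translation between asymptotic invariances: on the one hand $\tilde\phi(B_n)\triangle B_n = \phi(B_n\cap X_\infty)\triangle(B_n\cap X_\infty)$ (with $\tilde\phi$ the identity extension of $\phi$); on the other hand, for $C_n\subset X_\infty$ and $\phi\in[R_{cl}]$, the set $\phi(C_n)\triangle C_n$ lies in $X_\infty$ and equals $(\phi|_{X_\infty})(C_n)\triangle C_n$.

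For $(2)\Leftrightarrow(3)$, I would take $F=\{\rho\}$ in (2) and note that $P_n^{\epsilon_n}(x,\proptoop(x)\cap\{\rho\})$ is vacuously true when $\rho\not\overset{\pi(x)}{\fleche}\infty$ and equals $P_n^{\epsilon_n}(x,\rho)$ otherwise, so
$$\Pp\bigl[P_n^{\epsilon_n}(x,\proptoop(x)\cap\{\rho\})\bigr] = 1 - \Pp[X_\infty] + \Pp[X_\infty]\cdot\Pp\bigl[P_n^{\epsilon_n}(x,\rho)\mid\rho\overset{\pi(x)}{\fleche}\infty\bigr],$$
giving $(2)\Rightarrow(3)$. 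For the converse, the $\G$-invariance of $\Pp$ and of each $P_n$ together with the $\G$-equivariance of $\pi$ yield $\Pp[\neg P_n^{\epsilon_n}(x,v),\,v\overset{\pi(x)}{\fleche}\infty]=\Pp[\neg P_n^{\epsilon_n}(x,\rho),\,\rho\overset{\pi(x)}{\fleche}\infty]$ for every $v\in V$, so the finite union bound
$$\Pp\bigl[\neg P_n^{\epsilon_n}(x,\proptoop(x)\cap F)\bigr]\leq\sum_{v\in F}\Pp\bigl[v\in\proptoop(x),\,\neg P_n^{\epsilon_n}(x,v)\bigr]$$
transports the $F=\{\rho\}$ estimate from (3) to arbitrary $F\Subset V$.

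For $(1)\Leftrightarrow(3)$, Lemma~\ref{bij} combined with the identity $\Pp[P_n(x,\rho)\mid\rho\overset{\pi(x)}{\fleche}\infty]=\Pp(B_{P_n}\cap X_\infty)/\Pp[X_\infty]$ shows that (3) is equivalent to: for every $\Pp$-asymptotically $R_{cl}$-invariant sequence $(B_n)$ of Borel subsets of $X$, setting $C_n:=B_n\cap X_\infty$,
$$\frac{\Pp(C_n)}{\Pp[X_\infty]}\left(1-\frac{\Pp(C_n)}{\Pp[X_\infty]}\right)\underset{n\to\infty}{\longrightarrow}0.$$
By the translation of asymptotic invariances from the first paragraph, quantifying this over all such $(B_n)$ is the same as quantifying it over all $\tfrac{\Pp}{\Pp[X_\infty]}$-asymptotically $R$-invariant sequences $(C_n)$ of Borel subsets of $X_\infty$, which is exactly the definition of $\tfrac{\Pp}{\Pp[X_\infty]}$-strong ergodicity of $R$, i.e. condition (1).

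\textbf{Expected obstacle.} The proposition is mostly bookkeeping: the genuine work has been absorbed into Lemma~\ref{bij} (which itself relied on Lemma~\ref{noinjectivity}). The only point where care is required is the correspondence between $\Pp$-asymptotic $R_{cl}$-invariance of $(B_n)$ on $X$ and $\tfrac{\Pp}{\Pp[X_\infty]}$-asymptotic $R$-invariance of $(B_n\cap X_\infty)$ on $X_\infty$, which demands both the identity extension $[R]\hookrightarrow[R_{cl}]$ and the restriction $[R_{cl}]\to [R]$ to produce matching symmetric differences. It is worth emphasising --- in contrast with Proposition~\ref{ergind} --- that no ergodicity hypothesis on $\G\acts(X,\Pp)$ is needed, because the dichotomy that ergodicity would supply is already encoded in the $(\epsilon_n)$ appearing in conditions (2) and (3).
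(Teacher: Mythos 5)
Your argument is correct and matches the paper's proof in substance: both rest on lemma~\ref{bij}, a finite union bound combined with $\G$-invariance to pass from a single vertex to an arbitrary $F\Subset V$, and the specialization $F=\{\rho\}$; the only difference is that you arrange the implications as $(1)\Leftrightarrow(3)$ and $(2)\Leftrightarrow(3)$ while the paper runs the cycle $(1)\Rightarrow(2)\Rightarrow(3)\Rightarrow(1)$. Your explicit verification that $X_\infty$ is $R_{cl}$-saturated and that asymptotic $R_{cl}$-invariance on $X$ corresponds to asymptotic $R$-invariance on $X_\infty$ spells out what the paper leaves as ``straightforward.''
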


\vspace{0.2 cm}

\begin{proof}
Assume that $R$ is strongly ergodic. Let $(P_n)$ be an asymptotic cluster property. Set $B_n:=B_{P_n}$. By strong ergodicity, there exists $(\epsilon_n)\in\{-,+\}^\N$ such that $\Pp[B^{-\epsilon_n}\cap X_\infty]$ tends to 0. (We denote by $B^+$ the set $B$ and $B^-$ its complement.) Hence, for any $\Lambda\Subset \Gamma$, $\Pp\left[\bigcup_{\gamma\in \Lambda} \gamma\cdot(B^{-\epsilon_n}\cap X_\infty)\right]$ tends to 0. This establishes the second statement: specifying the previous sentence for a particular $\Lambda$ solves the case $F=\Lambda^{-1}\cdot\rho$.

Taking $F=\{\rho\}$ gives $(\text{ii}) \implies (\text{iii})$ and $(\text{iii})\implies (\text{i})$ is straightforward.

\end{proof}

\begin{thm}
\label{ergindfort}
Consider a generalized percolation such that $\G \agit (X,\Pp)$ is strongly ergodic and $\Pp[X_{\infty}]>0$. It satisfies the Strong Indistinguishability Property if and only if $R$ is $\frac{\Pp}{\Pp[X_{\infty}]}$-strongly ergodic.
\end{thm}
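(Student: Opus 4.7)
The plan is to mirror the proof of Proposition~\ref{ergind}, replacing Propositions~\ref{one} and~\ref{two} with their strong analogues Propositions~\ref{strongone} and~\ref{strongtwo}, and invoking Lemma~\ref{restriction} (together with the hypothesis that $\G\acts(X,\Pp)$ is strongly ergodic) to bridge the gap between $R_{cl}$-asymptotic invariance and $\G$-asymptotic invariance.

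The easy direction ``$R$ strongly ergodic $\Rightarrow$ strong indistinguishability'' is immediate from the implication (i)$\Rightarrow$(ii) of Proposition~\ref{strongtwo}: given an asymptotic cluster property $(P_n)$ and $F\Subset V$, Proposition~\ref{strongtwo} supplies a sign sequence $(\epsilon_n)\in\{-,+\}^\N$ with $\Pp[P_n^{\epsilon_n}(x,\proptoop(x)\cap F)]\to 1$, and $P_n^{\epsilon_n}$ implies $P_n^{\pm}$. The strong ergodicity of $\G\acts(X,\Pp)$ plays no role here.

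For the converse, I would take a $\frac{\Pp}{\Pp[X_\infty]}$-asymptotically $R$-invariant sequence $(B_n)$ of Borel subsets of $X_\infty$, regarded as Borel subsets of $X$ (extended by $\emptyset$). Since $X_\infty$ is $R_{cl}$-invariant, every $\phi\in[R_{cl}]$ restricts to an element of $[R]$ on $X_\infty$, so $(B_n)$ is also $\Pp$-asymptotically $R_{cl}$-invariant. Strong indistinguishability, via Proposition~\ref{strongone}, then yields, for every $\Lambda\Subset\G$,
$$
\Pp\!\left[B_{n,+}^{\Lambda}\cup B_{n,-}^{\Lambda}\right]\underset{n\to\infty}{\longrightarrow}1,
$$
where $B_{n,+}^{\Lambda}:=\bigcap_{\gamma\in\Lambda}\gamma\cdot(B_n\cup(X\setminus X_\infty))$ and $B_{n,-}^{\Lambda}$ is defined symmetrically with $X_\infty\setminus B_n$ in place of $B_n$. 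This is precisely the ingredient that Lemma~\ref{restriction} extracted from asymptotic $R$-invariance via Lemma~\ref{noinjectivity}; in our setting it is provided directly by strong indistinguishability, because $R_{cl}$ is in general a proper subrelation of $R_{\G\acts X}$ and Lemma~\ref{noinjectivity} alone would not deliver this estimate.

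The remainder is a transcription of Lemma~\ref{restriction}'s argument with $Y=X$ and $Z=X_\infty$: a diagonal extraction yields a non-decreasing exhaustion $(\Lambda_n)$ of $\G$ containing $1$ and satisfying $\Pp[B_{n,+}^{\Lambda_n^{(2)}}\cup B_{n,-}^{\Lambda_n^{(2)}}]\to 1$; setting $B'_n:=B_{n,+}^{\Lambda_n}$, one checks that $(B'_n)$ is $\Pp$-asymptotically $\G$-invariant and that $\Pp(B_n\triangle(B'_n\cap X_\infty))\to 0$, using that $B_{n,+}^{\Lambda_n}\cap B_{n,-}^{\Lambda_n}$ is supported off $\Lambda_n\cdot X_\infty$ and that $\Pp[\Lambda_n\cdot X_\infty]\to 1$ by ergodicity of $\G\acts(X,\Pp)$. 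The strong ergodicity hypothesis on $\G\acts(X,\Pp)$ then forces $\Pp(B'_n)(1-\Pp(B'_n))\to 0$; combined with $\Pp(B'_n\cap X_\infty)=\Pp(B_n)+o(1)$ and $\Pp(B'_n\setminus X_\infty)\leq 1-\Pp[X_\infty]$, every accumulation point of $\Pp(B_n)/\Pp[X_\infty]$ must lie in $\{0,1\}$, which is the $\frac{\Pp}{\Pp[X_\infty]}$-strong ergodicity of $R$. The only real obstacle is the bookkeeping of the diagonal extraction, already carried out in Lemma~\ref{restriction}; conceptually, strong indistinguishability is exactly the extra ingredient that makes that machinery go through.
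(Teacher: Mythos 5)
Your proof is correct and follows essentially the same route as the paper: the easy direction via Proposition~\ref{strongtwo}, and the converse by observing that asymptotic $R$-invariance gives asymptotic $R_{cl}$-invariance, feeding this into strong indistinguishability (Proposition~\ref{strongone}), and then running the machinery of Lemma~\ref{restriction} with $Y=X$, $Z=X_\infty$. The only cosmetic difference is that the paper first upgrades the conclusion to ``$(B_n)$ is $\frac{\Pp}{\Pp[X_\infty]}$-asymptotically $(R_{\Gamma\acts X})_{|X_\infty}$-invariant'' and then cites Lemma~\ref{restriction} as a black box, whereas you re-run its internal diagonal-extraction argument, correctly noting that strong indistinguishability supplies the estimate $\Pp[B_{n,+}^{\Lambda}\cup B_{n,-}^{\Lambda}]\to 1$ that the lemma's proof would otherwise extract from asymptotic invariance.
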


\vspace{0.3 cm}

\begin{proof}
If $R$ is strongly ergodic, Proposition~\ref{strongtwo} implies that strong indistinguishability holds.
Conversely, assume strong indistinguishability to hold. Let $(B_n)$ be a $\frac{\Pp}{\Pp[X_\infty]}$-asymptotically $R$-invariant sequence of Borel subsets of $X_\infty$.
Strong indistinguishability implies that for every $\gamma$,
$$
\Pp\left[\{x\in X_\infty:\gamma\cdot x \in X_\infty \implies (x\in B_n\Longleftrightarrow \gamma\cdot x\in B_n)\}\right]\underset{n\to\infty}{\longrightarrow}\Pp[X_\infty].
$$
This means that $(B_n)$ is $\frac{\Pp}{\Pp[X_\infty]}$-asymptotically $(R_{\Gamma\agit X})_{|X_\infty}$-invariant. 
By Lemma~\ref{restriction}, the strong ergodicity of $R_{\Gamma\agit X}$ entails that the only possible accumulation points of $(\Pp[B_n\cap X_\infty])$ are $0$ and $\Pp[X_\infty]$. This ends the proof.

\end{proof}

\vspace{0.3 cm}

From this theorem and the few lines at the beginning of the current subsection, we can derive the following corollary --- even for $p=p_u(\mathcal G)$ if the assumption of the corollary is satisfied for this parameter.

\begin{coro}
\label{corollaire}
As soon as $\Pp_p$ produces infinitely many infinite clusters, it satisfies the Strong Indistinguishability Property.
\end{coro}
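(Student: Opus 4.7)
The plan is to apply theorem~\ref{ergindfort}, which reduces the strong indistinguishability of $\Pp_p$ to three ingredients: (a) strong ergodicity of the ambient shift action $\G \agit (X,\Pp_p)$, (b) positivity of $\Pp_p[X_\infty]$, and (c) $\frac{\Pp_p}{\Pp_p[X_\infty]}$-strong ergodicity of the cluster equivalence relation $R$ on the infinite locus. Condition (b) is immediate, since the existence of infinite clusters is part of the hypothesis.

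For (a), I would first observe that $\Pp_p$ is insertion-tolerant and produces more than one infinite cluster, so by the contrapositive of proposition~\ref{unicite} the group $\G$ must be non-amenable. One can then invoke the classical fact that Bernoulli shifts of non-amenable countable groups are strongly ergodic; alternatively, since the Bernoulli action is essentially free, proposition~\ref{gener} gives non-amenability of the orbit equivalence relation $\mathcal{S} = R_{\G \acts X}$, and applying theorem~\ref{corocf} (Chifan--Ioana) with $B = X$ yields strong ergodicity of $\mathcal{S}$, and hence of the ambient action.

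For (c), I would apply theorem~\ref{corocf} with $B = X_\infty$. By construction $R_{cl}$ is a subrelation of $R_{\G \acts X} = \mathcal{S}$, so $R$ is a subrelation of $\mathcal{S}_{|X_\infty}$. It is ergodic: indistinguishability of infinite clusters holds by Lyons--Schramm (theorem~\ref{ls}), and proposition~\ref{ergind} translates this, using ergodicity of the ambient Bernoulli shift, into $\frac{\Pp_p}{\Pp_p[X_\infty]}$-ergodicity of $R$. It is not $\frac{\Pp_p}{\Pp_p[X_\infty]}$-amenable by proposition~\ref{themm}, whose hypotheses --- insertion-tolerance, $\Pp_p$-a.s.\ infinitely many infinite clusters, and essential freeness of the Bernoulli action --- are all met. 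Chifan--Ioana then delivers the strong ergodicity of $R$, and an appeal to theorem~\ref{ergindfort} concludes.

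The main obstacle is conceptual rather than technical: essentially all of the deep work is absorbed into the black boxes of Lyons--Schramm, Chifan--Ioana, and theorem~\ref{ergindfort} itself. What remains is to check that the hypotheses of each are verified and to chain them in the right order, the key link being the passage from mere ergodicity of $R$ to its strong ergodicity via non-amenability and Chifan--Ioana.
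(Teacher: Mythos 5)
Your proposal is correct and follows essentially the same route as the paper, which derives strong ergodicity of $R$ on the infinite locus from theorems~\ref{corocf}, \ref{ls} and \ref{themm} together with proposition~\ref{ergind}, and then applies theorem~\ref{ergindfort}. Your extra verification of strong ergodicity of the ambient Bernoulli action (via non-amenability of $\G$ from proposition~\ref{unicite}) is a detail the paper leaves implicit, and is a welcome addition rather than a deviation.
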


\subsection{Classic and strong indistinguishability do not coincide}

\label{donotcoincide}

Obviously,  strong indistinguishability implies the classic one: take $P_n=P$ for all $n$. In this subsection, we study a particular percolation, and prove that it satisfies the Indistinguishability Property but not the strong one.

To define this percolation, take $\Gamma$ to be the free group $\langle a, b\rangle$. Endow it with the generating system $\{a,b\}$. We will use the concrete definition of Cayley graphs and take the vertex set of $\mathcal{G}$ to be $\Gamma$. 
Set
$$X:=\{a,b\}^\Gamma~~~~~~~~~~\text{and}~~~~~~~~~~\Pp:=\left(\frac{1}{2}\delta_a+\frac{1}{2}\delta_b\right)^{\otimes \Gamma}.$$
The equivariant map $\pi$ is defined as follows: for each $\gamma$, among the two edges $\{\gamma,\gamma a\}$ and $\{\gamma, \gamma b\}$, open the edge $\{\gamma,\gamma x_\gamma\}$ and close  the other one. The analogous model for $\Z^2$ instead of $\langle a, b\rangle$ has been extensively studied, see e.g. \cite{finr} and references therein.

\begin{center}
\includegraphics[width=9.5 cm]{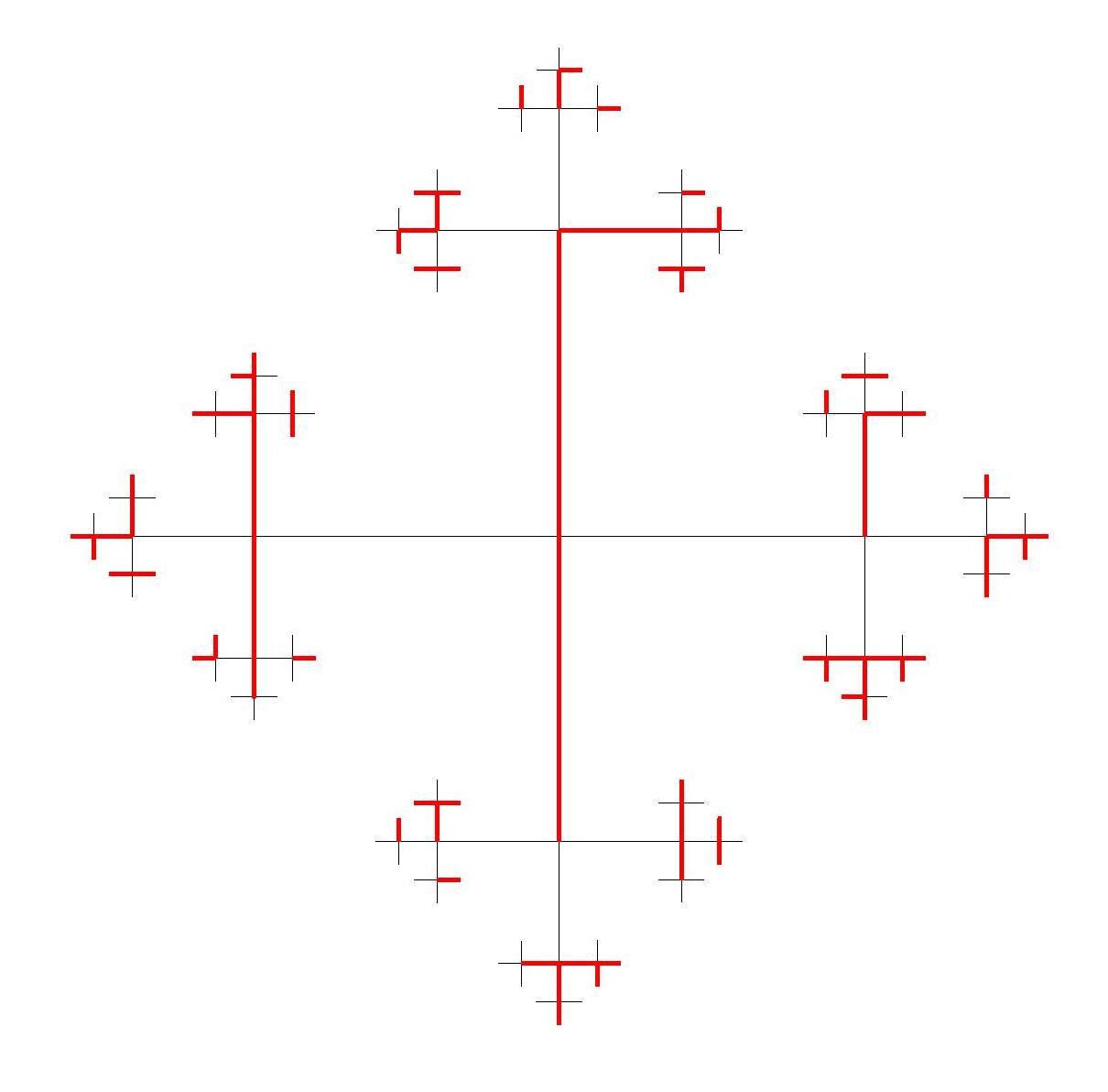}
\end{center}

\begin{thm}
The considered percolation satisfies the Indistinguishability Property but not the Strong Indistinguishability Property.
\end{thm}
\begin{proof}
In this proof, we will use the \emph{height} function defined as the unique morphism
$$
\begin{array}{lccl}
h : & \Gamma & \longrightarrow & \Z \\
    & a & \longmapsto & 1\\
    & b & \longmapsto & 1.\end{array}
$$

First, let us prove that strong indistinguishability does not hold. The $x$\emph{-directed path} launched at $\gamma$ is defined by $\tilde\gamma_0:=\gamma$ and $\tilde\gamma_{k+1}:=\tilde\gamma_k x_{\tilde\gamma_k}$. The elements $x_{\tilde\gamma_k}$ are called the \emph{steps} of the directed path.
Set $P_n(x,\gamma)$ to be ``there are more $a$'s than $b$'s in the first $2n+1$ steps of the $x$-directed path launched at $\gamma$''.
Let $d$ denote the graph distance on $\mathcal{G}$. Let $\gamma$ and $\eta$ denote two elements of $\Gamma$. Assume that there exists $x$ such that $\gamma$ and $\eta$ are $\pi(x)$-connected. Then, along the geodesic path from $\gamma$ to $\eta$, the height increases, reaches a unique maximum, and then decreases. Let $\tau$ be the vertex where this maximum is attained. If $\gamma$ and $\eta$ are $\pi(x)$-connected, the $x$-directed paths launched at $\gamma$ and $\eta$ coincide with the one launched at $\tau$, up to forgetting the first $d(\gamma,\tau)$ steps of the first path and the first $d(\eta,\tau)$ ones of the second. Thus, the probability of the event $$\gamma \overset{\pi(x)}{\fleche} \eta ~~~~~~\text{and}~~~~~~P_n(x,\gamma)\not= P_n(x,\gamma)$$ is less than the probability that a simple random walk on $\Z$ that takes $n-d(\gamma,\eta)$ steps ends up in $[-d(\eta,\gamma),d(\eta,\gamma)]$. This is known to go to zero as $n$ goes to infinity, as $n^{-1/2}$. Therefore, by Proposition~\ref{propopo}, $(P_n)$ is an asymptotic cluster property. But $P_n(x,a)$ and $P_n(x,b)$ are independent of probability $1/2$. Since the considered percolation produces only infinite clusters, it cannot satisfy the Strong Indistinguishability Property.

Now, let us establish the Indistinguishability Property.
Let us define the \emph{contour} exploration of the cluster of the origin $\rho=1$. Intuitively, we explore the cluster of the origin (and some vertices of its boundary) using a depth-first search algorithm, with the following conventions:
\begin{itemize}
\item vertices of negative height are ignored,

\item when a vertex $\gamma$ has its two sons $\gamma a^{-1}$ and $\gamma b^{-1}$ in its cluster, $\gamma a^{-1}$ is explored first --- in figures, $\gamma a^{-1}$ will be represented to the left of $\gamma b^{-1}$.
\end{itemize} Formally, the exploration is defined as follows. If $m$ is an integer, define $\vec{E}_{x,m}$ to be
$$
\left\{(\gamma,\gamma s^{-1}):\gamma\in\Gamma,~ s\in\{a,b\},~ h(\gamma)> m\right\}\cup\left\{(\gamma,\gamma x_\gamma):\gamma\in\Gamma,~h(\gamma)\geq m\right\}.
$$
Given a configuration $x\in\{a,b\}^\Gamma$, we define a bijection $\mathsf{next}_{x,m}$ from $\vec{E}_{x,m}$ to itself. If $(\gamma,\gamma')\in \vec{E}_{x,m}$, then $\mathsf{next}_{x,m}(\gamma,\gamma')$ is set to be $(\gamma',\gamma'')$, where $\gamma''$ is

\vspace{0.6 cm}

\begin{tabular}{ll}
$\gamma'b^{-1}$& if $\gamma'=\gamma a$,\\
$\gamma' a^{-1}$& if $\gamma=\gamma' x_{\gamma'}$ and $h(\gamma') > m$,\\
$\gamma$& if $\gamma\not=\gamma' x_{\gamma'}$ and $h(\gamma)=h(\gamma')+1$,\\
$\gamma'x_{\gamma'}$& otherwise.
\end{tabular}

\vspace{0.6 cm}

\begin{center}
\includegraphics[height=5.4cm]{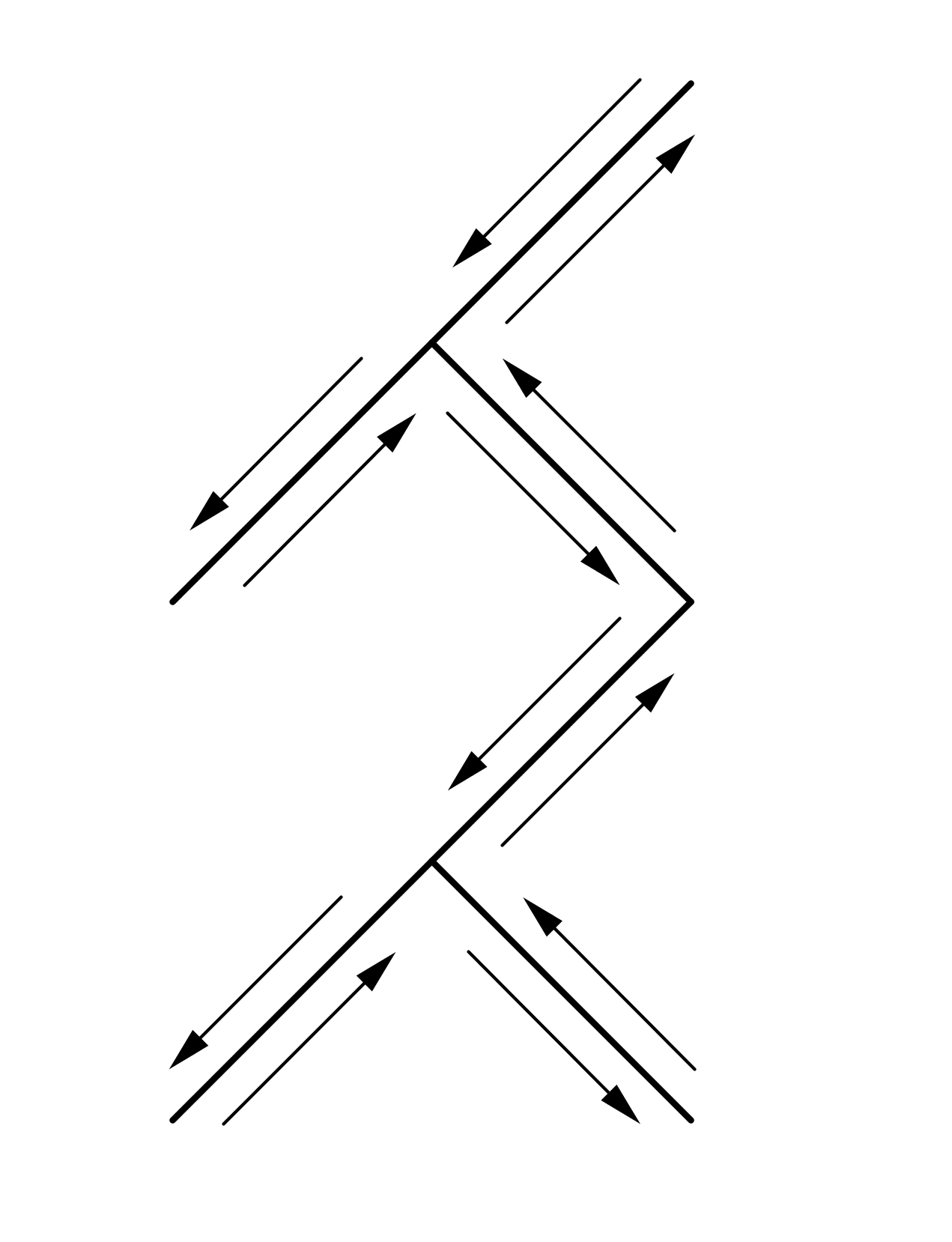}
\end{center}

The \emph{exploration} --- or \emph{exploration in positive time} --- is defined by
\begin{itemize}
\item $\vec{e}_0=(\gamma_0,\gamma_1)=(1,x_1)$,

\item $\forall k> 0,~\vec{e}_k=(\gamma_k,\gamma_{k+1})=\mathsf{next}_{x,0}(\vec{e}_{k-1})$.
\end{itemize}
Since $\mathsf{next}_x$ is a bijection, one can also define the \emph{exploration in negative time}:
\begin{itemize}
\item $\vec{e}_0=(\gamma_0,\gamma_1)=(1,x_1)$,

\item $\forall k\leq 0,~\vec{e}_k=(\gamma_k,\gamma_{k+1})=\mathsf{next}_{x,0}(\vec{e}_{k-1})$.
\end{itemize}
Whenever there is no explicit mention of negative times, ``exploration'' will always be understood as ``exploration in positive time''.
Define  $$k(x):=\min\left\{k>0:h(\gamma_k)=0\text{ and }\gamma_k\overset{\pi(x)}{\fleche}1\right\}.$$ Notice that it is almost surely well-defined.

\vspace{0.3 cm}

\begin{small}
Indeed, for each positive height $n$, there exists a unique couple $(\gamma_{n,x},\gamma'_{n,x})$ satisfying the following conditions:
\begin{itemize}
\item the $x$-directed path launched at 1 contains $\gamma_{n,x}'$ but not $\gamma_{n,x}$,
\item $\gamma_{n,x}^{-1}\gamma_{n,x}'\in\{a,b\}$
\item and $h(\gamma_{n,x})=n$.
\end{itemize}
Denote by $T_{n,x}$ the connected component of $\gamma_{n,x}$ in the graph defined by $\pi(x)$ but where the edges $\gamma_{n,x}a$ and $\gamma_{n,x}b$ have been removed. It is rooted at $\gamma_{n,x}$. The following facts hold:
\begin{itemize}
\item considered as rooted graphs up to isomorphism, the $T_{n,x}$'s are i.i.d.\ critical Galton-Watson trees,
\item each $T_{n,x}$ has probability 1/4 of being explored\footnote{Of course, the generations of negative height are not explored.} by the contour exploration (it has probability 1/2 of belonging to the cluster of 1 and, conditionned on this, it has probability 1/2 of being explored in positive time rather than in negative time)
\item and the events and random variables mentionned in the two facts above are independent.
\end{itemize}
Since the depth of a critical Galton-Watson tree is non-integrable, by the independent form of the Borel-Cantelli Lemma, it almost surely occurs that one of them is explored and reaches height 0.
\end{small}

\vspace{0.3 cm}

Thus, the Borel mapping $x\mapsto \gamma_{k(x)}^{-1}\cdot x$ coincides on a full-measure set with a Borel bijection $T:X\to X$.

\vspace{0.3 cm}

\begin{small}
Indeed,  $k'(x):=\min\{k<0:h(\gamma_k)=0\text{ and }\gamma_k\overset{\pi(x)}{\fleche}1\}$ is almost surely well-defined, so that the mapping $S:x\mapsto \gamma_{k'(x)}^{-1}\cdot x$ is almost surely well-defined. For almost every $x$, $T(S(x))=S(T(x))$.
\end{small}

\vspace{0.3 cm}

For almost every $x$, the points $T(x)$ and $x$ are in the same $\Gamma$-orbit. By Theorem~\ref{preserv}, the Borel bijective map $T$ preserves the measure $\Pp$.
By Proposition~\ref{ergind}, it is enough to show that $T$ is ergodic. (Indeed, for almost every $x$, the point $T(x)$ and $x$ are in the same $R_{cl}$-class.)

Let $B$ denote a Borel subset of $X$ and assume that $B=T(B)$. We need to show that $\Pp[B]\in\{0,1\}$. Let $\epsilon >0$. Let $C$ be an event such that
\begin{itemize}
\item $\Pp[B\triangle C]<\epsilon$,

\item $C$ is $\sigma(x_{|\mathcal{B}})$-measurable for some ball $\mathcal{B}$ centered at 1.
\end{itemize}
Denote by $R$ the radius of the ball $\mathcal{B}$ and by $\textbf{C}$ the subset of $\{a,b\}^\mathcal{B}$ such that
$$
C=\textbf{C}\times\prod_{\gamma\not\in\mathcal{B}} \{a,b\}.
$$
Set $X_n:=T^n(x)_{|\mathcal{B}}$. We will show that $(X_n)_{n\geq 0}$ is an irreducible aperiodic time-homo\-ge\-neous Markov chain. Assuming this, we conclude the proof. Since $\Pp$ is $T$-invariant, it would result from our assumption that
$$
\Pp[X_0\in \textbf{C}\text{ and }X_n\in\textbf{C}] \underset{n\to\infty}{\longrightarrow}\Pp[X_0\in \textbf{C}]^2.
$$
Using the notation $A\overset{\epsilon}{\simeq} A'$ as a shortcut for $\Pp[A\triangle A']\leq\epsilon$, we have
$$
B=B\cap T^n(B)\overset{2\epsilon}{\simeq} C\cap T^n(C).
$$
Letting $n$ go to infinity, we get $|\Pp[B]-\Pp[C]^2|\leq2\epsilon$. Since $|\Pp[C]-\Pp[B]|<\epsilon$, we have $|\Pp[B]-\Pp[B]^2|<4\epsilon$. Letting $\epsilon$ go to zero, one gets $\Pp[B]=\Pp[B]^2$ and concludes.

Now, let us prove that $(X_n)$ is an irreducible aperiodic time-homo\-ge\-neous Markov chain.
Since $(X_n)$ is defined by iteration and restriction of the measure-preserving transformation $T$, if it is a Markov chain, it is necessarily time-homo\-ge\-neous. Let us establish the Markov Property.

To define $(X_0,\dots,X_n)$, one needs to explore a certain set of vertices denoted by $\mathsf{Explo}_n(x)$.
\begin{itemize}
\item Conditionally on $(X_0,\dots,X_n)$, the state of the vertices in $\Gamma\backslash\mathsf{Explo}_n(x)$ is i.i.d. $\frac{1}{2}\delta_a+\frac{1}{2}\delta_b$.

\item Define $\hat\gamma_0$ to be the point of height $R+1$ in the $x$-directed path launched at $1$. Then, define an auxiliary exploration: it explores the vertices of the cluster of the origin as previously until it reaches $\hat\gamma_0 x_{\hat\gamma_0}$ and then executes the exploration defined by $\mathsf{next}_{x,R+1}$. Notice that, after $\hat\gamma_0$, the vertices explored by the auxiliary exploration are exactly the ones of height at least $R+1$ that are explored by the usual exploration; besides, they are explored in the same order.  Denote by $(\hat\gamma_k)$ the sequence of the vertices of height exactly $R+1$ that are visited by any of our two explorations, in the order in which they are discovered. Set $\mathcal{P}$ to be the set of the elements of $\Gamma$ whose expression as a reduced word starts with $a^{-1}$ or $b^{-1}$. Conditionally on the data of the whole auxiliary exploration, the sequence $$\left((\hat\gamma_k^{-1}\cdot x)_{|\mathcal{P}}\right)_{k\geq 1}$$ is i.i.d., the common law of its elements being $\left(\frac{1}{2}\delta_a+\frac{1}{2}\delta_b\right)^{\otimes\mathcal{P}}$.
\end{itemize}

The exploration never visits a site of $\hat\gamma_k\cdot\mathcal{P}$ after one of $\hat\gamma_\ell\cdot\mathcal{P}$ for $\ell > k$. Thus, to establish the Markov Property, it is enough to show that, within some $\hat\gamma_k\cdot\mathcal{P}$, the vertices that we explore between the $n^\text{th}$ and $(n+1)^\text{th}$ steps of the construction (in order to define $X_{n+1}$) and that have already been explored have their state written in $X_n$. More formally, it is enough to show that if we set
\begin{itemize}
\item $k_-:=\min\{k\leq 0 : \gamma_k=\hat \gamma_0\}$,
\item $k_+:=\max\{k\geq 0 : \gamma_k=\hat \gamma_0\}$,
\item $\mathcal{L}:=\{\gamma_k : k_- \leq k \leq 0\}\backslash\{\hat \gamma_0\}$,
\item $\mathcal{L}':=\{\eta:\exists \gamma \in \mathcal{L},~ h(\gamma)=0\text{ and }d(\gamma,\eta)\leq R\}$,
\item $\mathcal{R}:=\{\gamma_k : 0 \leq k \leq k_+\}\backslash\{\hat \gamma_0\}$,
\item $\mathcal{R}':=\{\eta:\exists \gamma \in \mathcal{R},~ h(\gamma)=0\text{ and }d(\gamma,\eta)\leq R\}$
\end{itemize}
then $(\mathcal{L}\cup \mathcal{L}')\cap(\mathcal{R}\cup \mathcal{R}')$ is always included in $\mathcal{B}$. Since $\mathcal{L}'\cap \mathcal{R}'$ consists in the $1+R$ first vertices visited by the $x$-directed path launched at $1$, it is a subset of $\mathcal{B}$. 
To establish $\mathcal{L}\cap \mathcal{R}'\subset\mathcal{B}$, take $\gamma$ in $\mathcal{L}$ and $\eta$ at height 0 such that $\eta\in\mathcal{R}$ and $d(\gamma,\eta)\leq R$. It results from the respective definitions of $\mathcal{L}$ and $\mathcal{R}$ that the geodesic path connecting $\gamma$ to the tripod $(1,\hat\gamma_0,\eta)$ intersects it at a point $\kappa$ which belongs to the geodesic $(1,\hat\gamma_0)$. Since $1$ and $\eta$ have the same height, $d(\kappa,1)\leq d(\kappa,\eta)$. Thus $d(\gamma,1)\leq d(\gamma,\eta)$ and $\mathcal{L}\cap \mathcal{R}'\subset\mathcal{B}$.

\vspace{0.6 cm}

\begin{center}
\includegraphics[height=4cm]{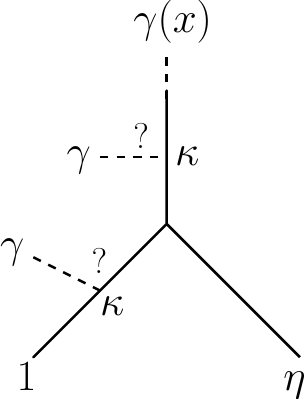}
\end{center}

\vspace{0.6 cm}

The inclusion $\mathcal{L}'\cap \mathcal{R}\subset \mathcal{B}$ follows by symmetry. To have the Markov Property, it remains to show that $\mathcal{L}'\cap \mathcal{R}'\subset \mathcal{B}$. This results from the fact that if $\gamma\in\mathcal{L}$ and $\eta\in\mathcal{R}$ both have height 0, then every point $\kappa$ of the tree spanned by $\{\hat\gamma_0,\gamma,1,\eta\}$ satisfies $$d(\kappa,1)\leq\max(d(\kappa,\gamma),d(\kappa,\eta)).$$

\vfill

\begin{center}
\includegraphics{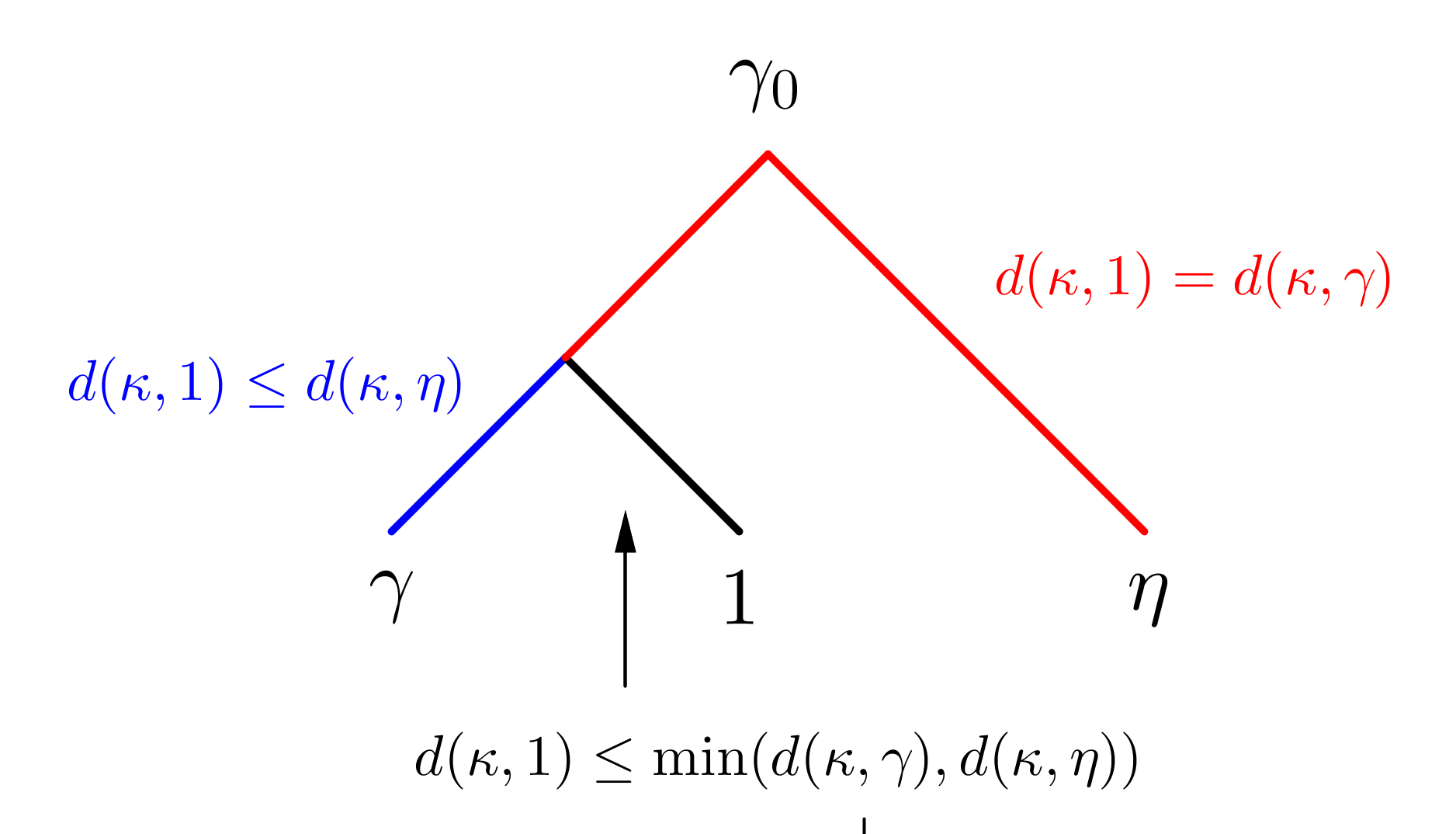}
\end{center}

\vfill

\begin{center}
\includegraphics{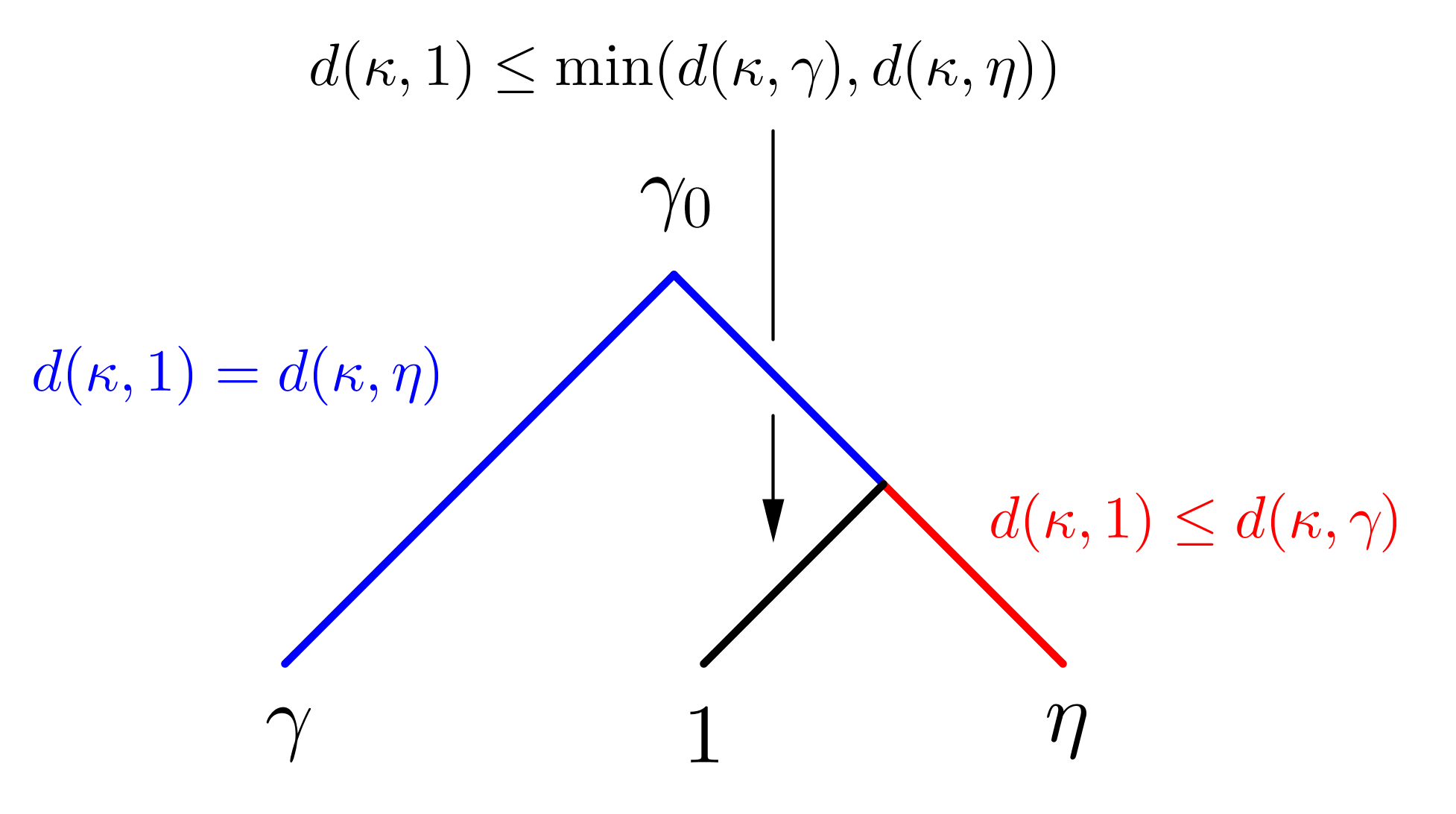}
\end{center}

Now, let us establish the irreducibility of the considered Markov chain. Let $\chi$ and $\xi$ be two elements of $\{a,b\}^\mathcal{B}$. The knowledge of the restriction of $x$ to $\mathcal{B}$ suffices to determine the point at height $R+1$ in the $x$-directed path launched at $1$. Denote it by $\gamma(x_{|\mathcal{B}})$. Imposing on $x$ the following conditions (compatible since they involve disjoint areas)
\begin{itemize}
\item $x_{|\mathcal{B}}=\chi$,
\item $x_{\gamma(\chi)}=a$,
\item $x_{\gamma(\chi)ab^{-1}}=b$,
\item $\left(\gamma(\xi)ba^{-1}\gamma(\chi)^{-1}\cdot x\right)_{|\mathcal{B}}=\xi$.
\end{itemize}
we have $X_0=\chi$ and $\exists k>0, X_k=\xi$. Thus, the intersection of these two events has positive probability and $(X_n)$ is irreducible.

\vspace{0.6 cm}

\begin{center}
\includegraphics[width=5cm]{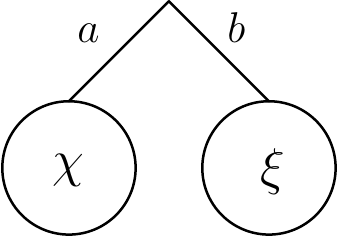}
\end{center}

\vspace{0.6 cm}

To establish the aperiodicity of the Markov chain $(X_n)$, apply the previous argument for $\chi=\xi=(a)_{\gamma\in\mathcal{B}}$ with the additional condition $x_{a^{n+1}b^{-1}}=a$, which gives $\Pp[X_0=X_1=(a)_{\gamma\in\mathcal{B}}]>0.$
\end{proof}
\begin{rem}
The previous proof does not only prove that the infinite clusters are indistinguishable, but also that the ``height-levels of infinite clusters'' are indistinguishable, which is a stronger statement.
\end{rem}

\subsection{Complements on asymptotic cluster properties}
\label{natural}

This subsection provides equivalent definitions of asymptotic cluster properties. We stick to the usual notation for generalized percolations.

\begin{nota}
If $x\in X$, denote by $\clusters(x)$ the set of the clusters of $\pi(x)$.
\end{nota}

\begin{prop}\label{propopo} Let $(P_n)$ be a sequence of properties.
The following assertions are equivalent
\begin{enumerate}
\item $(P_n)$ is a $\Pp$-asymptotic cluster property,

\item $\forall F \Subset V, \Pp\left[\forall C \in \clusters(x), P_n^\pm(x,C\cap F)\right]\underset{n\to\infty}{\longrightarrow}1$,

\item $\exists u\in V, \forall v\in V, \Pp[P_n^\pm(x,\{u,v\})|u\overset{\pi(x)}{\fleche}v]\underset{n\to\infty}{\longrightarrow}1$,

\item $\forall u\in V, \forall v\in V, \Pp[P_n^\pm(x,\{u,v\})|u\overset{\pi(x)}{\fleche}v]\underset{n\to\infty}{\longrightarrow}1$.
\end{enumerate}
\end{prop}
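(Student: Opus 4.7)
The plan is to use (iv) as the central hub, proving (ii)$\Leftrightarrow$(iv), (iii)$\Leftrightarrow$(iv), and (i)$\Leftrightarrow$(iv). The $\G$-symmetries baked into the definitions of ``property'' and generalized percolation will do most of the work; the only nontrivial ingredient will be a careful $\G$-equivariant construction of a rerooting, needed for (i)$\Rightarrow$(iv).

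For (iii)$\Leftrightarrow$(iv), the direction (iv)$\Rightarrow$(iii) is tautological. For the converse, given the fixed $u_0$ of (iii) and arbitrary $u,v\in V$, I will write $u=\gamma\cdot u_0$ by transitivity of $\G\acts V$ and set $v':=\gamma^{-1}\cdot v$. Using the $\G$-invariance of $\Pp$, the diagonal $\G$-invariance of $P_n$, and the $\G$-equivariance of $\pi$, the two joint events $\{P_n^\pm(x,\{u,v\}),\,u\overset{\pi(x)}{\fleche}v\}$ and $\{P_n^\pm(x,\{u_0,v'\}),\,u_0\overset{\pi(x)}{\fleche}v'\}$ have the same $\Pp$-measure, so their conditional probabilities agree and (iii) transports. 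For (ii)$\Leftrightarrow$(iv), the choice $F=\{u,v\}$ shows (ii)$\Rightarrow$(iv) immediately; conversely, the complement of the event in (ii) is contained in the finite union $\bigcup_{u\neq v\in F}\{u\overset{\pi(x)}{\fleche}v,\,P_n(x,u)\neq P_n(x,v)\}$, whose measure vanishes by (iv) and a union bound.

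For (iv)$\Rightarrow$(i), I will fix a rerooting $\alpha$ and $v\in V$. Since $u^\alpha_{x,v}$ takes values in the countable set $V$, for every $\varepsilon>0$ there is a finite $F\subset V$ with $\Pp[u^\alpha_{x,v}\notin F]<\varepsilon$. Splitting according to the value of $u^\alpha_{x,v}$ and using $u^\alpha_{x,v}\overset{\pi(x)}{\fleche}v$ (definition of a rerooting),
$$
\Pp\!\left[\neg P_n^\pm\!\left(x,\{v,u^\alpha_{x,v}\}\right)\right]\;\leq\;\varepsilon+\sum_{w\in F}\Pp\!\left[\neg P_n^\pm(x,\{v,w\}),\;v\overset{\pi(x)}{\fleche}w\right].
$$
Each summand vanishes in the limit by (iv); as $\varepsilon$ is arbitrary, (i) follows. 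The main step, (i)$\Rightarrow$(iv), reduces by the first paragraph to producing (iii) at $u_0=\rho$. For a fixed target $v=\gamma\cdot\rho$, using that $\G\acts V$ is free and transitive I will parametrize each $w\in V$ uniquely as $w=\eta_w\cdot\rho$ and set
$$
u^\alpha_{x,w}\;:=\;\begin{cases}\eta_w\gamma\cdot\rho & \text{if }\;w\overset{\pi(x)}{\fleche}\eta_w\gamma\cdot\rho,\\ w & \text{otherwise.}\end{cases}
$$
The $\G$-equivariance will follow from $\eta_{\sigma w}=\sigma\eta_w$ together with the equivariance of $\pi$. At the anchor $\rho$ (where $\eta_\rho=1$) one has $u^\alpha_{x,\rho}=v$ exactly when $\rho\overset{\pi(x)}{\fleche}v$, so applying condition (i) to this $\alpha$ at $\rho$ asserts that the event ``$\rho\not\overset{\pi(x)}{\fleche}v$ or $P_n^\pm(x,\{\rho,v\})$'' has probability tending to $1$, which is (iii) at $u_0=\rho$.

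The only delicate point is the equivariant construction of $\alpha$: a naive formula of the shape ``$u^\alpha_{x,w}:=\gamma\cdot w$ if $w\overset{\pi(x)}{\fleche}\gamma\cdot w$'' fails because left multiplication by a fixed $\gamma$ does not commute with the left $\G$-action on $V$. Passing through the canonical parametrization $w\mapsto\eta_w$ based at $\rho$ is what makes the construction equivariant; everything else reduces to transparent use of countability of $V$, $\G$-invariance of the measure, and union-bound bookkeeping.
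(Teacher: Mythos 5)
Your proof is correct and follows essentially the same route as the paper: (iii)$\Leftrightarrow$(iv) by transport under the $\G$-action, (ii)$\Leftrightarrow$(iv) by a finite union bound over pairs in $F$, (i)$\Rightarrow$(iii) via the rerooting $\alpha_\gamma$ applied at $\rho$, and (iv)$\Rightarrow$(i) by approximating the range of $u^\alpha_{x,v}$ by a finite set (the paper runs this last step from (ii) rather than (iv), an immaterial difference). Your explicit observation that $\alpha_\gamma$ must be defined by right multiplication in the parametrization $w=\eta_w\cdot\rho$ --- rather than by the naive $w\mapsto\gamma\cdot w$, which fails to be $\G$-equivariant for non-abelian $\G$ --- is a worthwhile clarification of a point the paper's example leaves implicit.
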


\begin{rem}
Above, we set $P[A|B]:=1$ when $\Pp[B]=0$.
\end{rem}
\begin{proof}
The assertions (iii) and (iv) are equivalent by $\G$-invariance.

Rewriting (ii) as follows
$$
\forall F \Subset V, \Pp\left[\forall (u,v)\in F^2, \left(u\overset{\pi(x)}{\fleche}v\right) \implies P_n^\pm(x,\{u,v\})\right]\underset{n\to\infty}{\longrightarrow} 1
$$
clarifies its equivalence\footnote{Remember that $\Pp[Q_n|Q]\underset{n\to\infty}{\longrightarrow}1$ is equivalent to $\Pp[Q\implies Q_n]\underset{n\to\infty}{\longrightarrow}1$.} with (iv): one way, take $F := \{u,v\}$; the other way, write $F$ as the \emph{finite} union of the pairs it contains.

Now assume (i) and establish (iii). We will do so for $u=\rho$.
Let $v=\gamma\cdot\rho$ be a vertex. 
Applying (i) to the $\alpha_\gamma$ introduced at the beginning of Subsection~\ref{strong}, one gets
$$
\Pp\left[\left\{x\in X : P_n(x,\rho)= P_n\left(x,u^{\alpha_\gamma}_{x,\rho}\right)\right\}\right]\underset{n\to\infty}{\longrightarrow}1.
$$
Hence, if $A:=\{x\in X: \rho\overset{\pi(x)}{\fleche}\gamma \cdot \rho\}$,
$$
\Pp\left[\left\{x\in A : P_n(x,\rho)= P_n\left(x,u^{\alpha_\gamma}_{x,\rho}\right)\right\}\right]\underset{n\to\infty}{\longrightarrow}\Pp[A].
$$
But, on $A$, ``$P_n(x,\rho)= P_n\left(x,u^{\alpha_\gamma}_{x,\rho}\right)$'' means that ``$P_n(x,\rho)= P_n(x,v)$'', so that (iii) is established.
\vspace{0.15 cm}

It is now enough to show that (ii) implies (i). Assume (ii). Let $\alpha$ be a rerooting. Set $w(x) := u^\alpha_{x,\rho}$ and take $\epsilon > 0$. Let $F\Subset V$ be such that $\Pp[w\not\in F]<\epsilon$.
We have
$$
(w \in F\text{ and }\forall C\in\clusters(x),P_n^\pm(x,F\cap C))\implies P_n^\pm\left(x,\left\{\rho, w\right\}\right).
$$
\begin{small}(Apply the second hypothesis to the common cluster of $\rho$ and $w$.)\end{small}

\vspace{0.15 cm}

The condition on the left hand side being satisfied with probability asymptotically larger than $1-2\epsilon$ (by (ii) and choice of $F$), $$\liminf_{n} \Pp\left[ P_n^\pm\left(x,\left\{\rho, w\right\}\right) \right]\geq 1-2\epsilon.$$ Since this holds for any value of $\epsilon$, the proof is over.
\end{proof}

\newpage
\small

\vfill

S\'ebastien Martineau

UMPA, ENS de Lyon

46 all\'ee d'Italie

69 364 Lyon Cedex 07

\textsc{France}

sebastien.martineau@ens-lyon.fr

\begin{thebibliography}{99}
\bibitem{a} S.\ {\sc Adams}, \emph{Trees and amenable equivalence relations}, Ergodic Theory and Dynamical Systems, vol.\ 10, p.\ 1-14, 1990.

\bibitem{bg} V.\ {\sc Bergelson} and A. {\sc Gorodnik}, \emph{Weakly mixing group actions: a brief survey and an example},
\emph{Modern dynamical systems and applications}, Cambridge University Press, Cambridge, p.\ 3-25, 2004.

\bibitem{blps1}  I.\ {\sc Benjamini}, R.\ {\sc Lyons}, Y.\ {\sc Peres} and O.\ {\sc Schramm}, {\it Group-invariant percolation on graphs}, Geometric and Functional Analysis, vol.\ 9, p.\ 29-66, 1999.


\bibitem{blps2} I.\ {\sc Benjamini}, R.\ {\sc Lyons}, Y.\ {\sc Peres} and O.\ {\sc Schramm}, {\it Critical Percolation on any Nonamenable Group Has no Infinite Clusters
}, The Annals of Probability, vol.\ 27 (3), p.\ 1347-1356, 1999.

\bibitem{bs} I.\ {\sc Benjamini} and O.\ {\sc Schramm}, {\it Percolation beyond $\Z^d$, many questions and a few answers}, Electronic Communication in Probability, vol.\ 1, p.\ 71-82, 1996.

\bibitem{bk} R.\ {\sc Burton} and M.\ {\sc Keane}, \emph{Density and uniqueness in percolation}, Communications in Mathematical Physics, vol.\ 121 (3), p.\ 501-505, 1989.

\bibitem{c} A.\ {\sc Connes}, \emph{Sur la th\'eorie non commutative de l'int\'egration}, S\'eminaire sur les alg\`ebres d'op\'erateurs (Les Plans-sur-Bex, 1978), p.\ 19-143, Lecture Notes in Mathematics 725, Springer, Berlin, 1979.

\bibitem{cfw} A.\ {\sc Connes}, J.\ {\sc Feldman} and B.\ {\sc Weiss}, \emph{An amenable equivalence relation is generated by a single transformation}, Ergodic Theory and Dynamical Systems, vol.\ 1, p.\ 431-450, 1981.

\bibitem{ci} I.\ {\sc Chifan} and A.\ {\sc Ioana}, \emph{Ergodic subequivalence relations induced by a Bernoulli action}, Geometric and Functional Analysis, vol.\ 20, p.\ 53-67, 2010.

\bibitem{d} H.\ {\sc Dye}, \emph{On groups of measure preserving transformations (I)}, American Journal of Mathematics, vol.\ 81, p.\ 119-159, 1959.

\bibitem{fm} J.\ {\sc Feldman} and C.\ {\sc Moore}, \emph{Ergodic equivalence relations, cohomology, and von Neumann algebras}, Bulletin of the American Mathematical Society,  vol.\ 81 (5), p.\ 921-924, 1975.

\bibitem{finr} L.\ {\sc Fontes}, M. {\sc Isopi}, C.\ {\sc Newman} and K.\ {\sc Ravishankar}, \emph{The Brownian web: characterization and convergence}, The Annals of Probability, vol.\ 32 (4), p.\ 2857-2883, 2004.

\bibitem{gcost} D.\ {\sc Gaboriau}, \emph{Co\^ut des relations d'\'equivalence et des groupes}, Inventiones Mathematicae, vol.\ 139 (1), p.\ 41-98, 2000. 

\bibitem{gdir} D.\ {\sc Gaboriau}, \emph{Invariant Percolation and Harmonic Dirichlet Functions}, Geometric and Functional Analysis, vol.\ 15 (5), p.\ 1004-1051, 2005.

\bibitem{gicm} D.\ {\sc Gaboriau}, \emph{Orbit Equivalence and Measured Group Theory}, Proceedings of the International Congress of Mathematicians (Hyderabad, India, 2010), vol.\ 3, Hindustan Book Agency, p.\ 1501-1527, 2010.

\bibitem{gl} D.\ {\sc Gaboriau} and R.\ {\sc Lyons}, {\it A Measurable-Group-Theoretic Solution to von Neumann's Problem}, Inventiones Mathematicae, vol.\ 177, p.\ 533-540, 2009.

\bibitem{gkn} A.\ {\sc Gandolfi}, M.\ {\sc Keane} and C.\ {\sc Newman}, \emph{Uniqueness of the infinite component in a random graph with applications to percolation and spin glasses}, Probability Theory and Related Fields, vol.\ 92, p.\ 511-527, 1992.

\bibitem{ghys} \'E.\ {\sc Ghys}, \emph{Topologie des feuilles g\'en\'eriques}, Annals of Mathematics, vol.\ 141 (2), p.\ 387-422, 1995.

\bibitem{grim} G.\ {\sc Grimmett}, \emph{Percolation}, $2^\text{nd}$ edition, Springer-Verlag,
Berlin, 1999.

\bibitem{hp} O.\  {\sc H\"aggstr\"om} and Y.\ {\sc Peres}, \emph{Monotonicity of Uniqueness for Percolation on Cayley Graphs: All Infinite Clusters are Born Simultaneously}, Probability Theory and Related Fields, vol.\ 113, p.\ 273-285, 1999.

\bibitem{jkl} S.\ {\sc Jackson}, A.\ {\sc Kechris} and A.\ {\sc Louveau}, \emph{Countable Borel equivalence relations},  Journal of Mathematical Logic, vol.\ 2 (1), p.\ 1-80, 2002.

\bibitem{k} A.\ {\sc Kechris}, \emph{Classical Descriptive Set Theory}, Graduate Texts in Mathematics 156, Springer-Verlag, 1995.


\bibitem{km} A.\ {\sc Kechris} and B.\ {\sc Miller}, \emph{Topics in orbit equivalence}, Lecture Notes in Mathematics, vol.\ 1852, Springer-Verlag, Berlin, 2004. 

\bibitem{levitt} G.\ {\sc Levitt}, \emph{La dynamique des pseudogroupes de rotations}, Inventiones mathematicae, vol.\ 113 (3), p.\ 633-670, 1993. 

\bibitem{l} R.\ {\sc Lyons}, \emph{Phase Transitions on Nonamenable Graphs}, Journal of Mathematical Physics, vol.\ 41, p.\ 1099-1126, 2000.


\bibitem{lp} R.\  {\sc Lyons}, {\rm with} Y.\ {\sc Peres}, \emph{Probability on trees and networks}, Cambridge University Press, 2011, in preparation. Current
  version available at {\tt http://mypage.iu.edu/\string~rdlyons/}.

\bibitem{ls} R.\ {\sc Lyons} and O.\ {\sc Schramm}, \emph{Indistinguishability of percolation clusters}, The Annals of Probability, vol.\ 27 (4), p.\ 1809-1836, 1999.

\bibitem{mvn} F.\ {\sc Murray} and J.\ {\sc von Neumann}, \emph{On rings of operators}, Annals of Mathematics, Second Series, vol.\ 37, p.\ 116-229, 1936.

\bibitem{psn} I.\ {\sc Pak} and T.\ {\sc Smirnova-Nagnibeda}, \emph{On non-uniqueness of percolation on nonamenable
Cayley graphs}, Comptes Rendus de l'Acad\'emie des Sciences, Paris, vol.\ 330 (1), p.\ 495-500, 2000.

\bibitem{p} S.\ {\sc Popa}, \emph{Deformation and rigidity for group actions and von Neumann algebras}, ICM, vol.\ 1, p.\ 445-477, European Mathematical Society, Z\"urich, 2007.

\bibitem{st} K.\ {\sc Schmidt}, \emph{Amenability, Kazhdan's property T, strong ergodicity and invariant means for ergodic group-actions}, Ergodic Theory and Dynamical Systems, vol.\ 1, p.\ 223-236, 1981.
\end{thebibliography}
\end{document}